\documentclass[12pt]{article}

\usepackage{amsfonts, amssymb, amsmath, amsthm, epsf}

\makeatletter

\sloppy

\voffset=-30mm \hoffset=-25mm \textheight=250mm \textwidth=180mm

\@addtoreset{figure}{section}

\theoremstyle{plain}
\newtheorem{theorem}{Theorem}
\@addtoreset{theorem}{section}

\newtheorem{lemma}{Lemma}
\@addtoreset{lemma}{section}

\newtheorem{corollary}{Corollary}
\@addtoreset{corollary}{section}

\newtheorem{condition}{Condition}
\@addtoreset{condition}{section}

\theoremstyle{definition}

\newtheorem{definition}{Definition}
\@addtoreset{definition}{section}

\@addtoreset{example}{section}

\newtheorem{remark}{Remark}
\@addtoreset{remark}{section}

\@addtoreset{equation}{section}

\makeatother

\renewcommand{\Im}{{\rm Im\,}}

\newcommand{\ind}{{\rm ind\,}}
\newcommand{\supp}{{\rm supp\,}}
\newcommand{\Dom}{{\rm D\,}}
\renewcommand{\ker}{{\rm ker\,}}
\renewcommand{\dim}{{\rm dim\,}}
\newcommand{\codim}{{\rm codim\,}}
\newcommand{\Gr}{{\rm Gr\,}}

\newcommand{\Orb}{{\rm Orb}}

\newcommand{\const}{{\rm const}}
\newcommand{\dist}{{\rm dist}}

\title{On the Stability of the Index of Unbounded Nonlocal Operators in Sobolev Spaces}
\author{Pavel Gurevich\thanks{Supported by the
Russian Foundation for Basic Research (project No.~04-01-00256).}
\thanks{E-mail: gurevichp@gmail.com}}

\date{}

\begin{document}

\maketitle


\begin{abstract}
Unbounded operators corresponding to nonlocal elliptic problems on
a bounded region $G\subset\mathbb R^2$ are considered. The domain
of these operators consists of functions from the Sobolev space
$W_2^m(G)$ being generalized solutions of the corresponding
$2m$-order elliptic equation with right-hand side from $L_2(G)$
and satisfying homogeneous nonlocal boundary conditions. It is
known that such unbounded operators have the Fredholm property. It
is proved in the paper that low-order terms in the differential
equation do not affect the index of the operator. Conditions under
which nonlocal perturbations on the boundary do not change the
index are also formulated.
\end{abstract}

\section*{Introduction}
\addcontentsline{toc}{section}{\protect\numberline{}Introduction}

In the one-dimensional case, nonlocal problems were studied by
A.~Sommerfeld~\cite{Sommerfeld}, J.~D.~Tamarkin~\cite{Tamarkin},
M.~Picone~\cite{Picone}. T.~Carleman~\cite{Carleman} considered
the problem of finding a function harmonic on a two-dimensional
bounded domain and subjected to a nonlocal condition connecting
the values of this function at different points of the boundary.
A.~V.~Bitsadze and A.~A.~Smarskii~\cite{BitzSam} suggested another
setting of a nonlocal problem arising in plasma theory: to find a
function harmonic on a bounded domain and satisfying nonlocal
conditions on shifts of the boundary that can take points of the
boundary inside the domain. Different generalizations of the above
nonlocal problems were investigated by many
authors (see~\cite{SkBook} and references therein).

It turns out that the most difficult situation occurs if the
support of nonlocal terms intersects the boundary. In this case,
solutions of nonlocal problems can have power-law singularities
near some points even if the boundary and the right-hand sides are
infinitely smooth~\cite{SkMs86}. For this reason, such
problems are naturally studied in weighted spaces (introduced by
V.~A.~Kondrat'ev for boundary-value problems in nonsmooth
domains~\cite{KondrTMMO67}). The most complete theory of nonlocal
problems in weighted spaces is developed by
A.~L.~Skubachevskii~\cite{SkMs86,SkDu90,SkDu91,SkJMAA,SkBook} and his pupils.

Note that the study of nonlocal problems is motivated both by
significant theoretical progress in that direction and important
applications arising in biophysics, theory of diffusion
processes, plasma theory, and so on.

In this paper, we investigate the influence of low-order terms in
the elliptic equation and the influence of nonlocal perturbations
in boundary conditions upon the index of the unbounded nonlocal
operator in $L_2(G)$. This issue was earlier studied by A.~L.
Skubachevskii~\cite{SkJMAA} for bounded operators in weighted
spaces. It is proved in~\cite{SkJMAA} that nonlocal perturbations
supported outside the points of conjugation of boundary conditions
do not change the index of the corresponding bounded operator. The
similar assertion has later been established in Sobolev spaces in
the two-dimensional case~\cite{GurRJMP04}. In both cases, one can
either use the method of continuation with respect to parameter or
reduce the original problem to that where nonlocal perturbations
have compact square. As for low-order terms in the elliptic
equation, they are simply compact perturbations.

The situation is quite different in the case of unbounded
operators. The difficulty is that the low-order terms in elliptic
equations are not compact or relatively compact (see
Definition~\ref{defPCompact}); moreover, if the order of the
elliptic equation is greater than two, they are not even
relatively bounded, and, therefore, they change the domain of
definition of the operator. As for nonlocal perturbations in
boundary conditions, they explicitly change the domain of
definition, and, therefore, they cannot be regarded as compact
perturbations (in any sense) either.

To overcome the above difficulties, we consider an auxiliary
operator (whose index equals the index of the original operator)
acting on weighted spaces. In Sec.~\ref{secPertLOT}, we prove that
low-order terms in elliptic equations are relatively compact
perturbations  of the auxiliary operator, and, therefore, they do
not affect the index. In Sec.~\ref{secPertNC}, we consider
nonlocal perturbations in boundary conditions, which explicitly
change the domain of definition. We make use of the notion of a
\textit{gap between unbounded operators} (see
Definition~\ref{defGap}). We show that, if nonlocal perturbations
in boundary conditions satisfy some regularity conditions at the
conjugation points, then multiplying the perturbations by a small
parameter leads to a small gap between the corresponding
operators. Combining this fact with the method of continuation
with respect to parameter, we prove the index stability theorem.

Finally, we note that the Fredholm property of unbounded nonlocal
operators on $L_2(G)$ was earlier studied either for the case in
which nonlocal conditions were set on shifts of the
boundary~\cite{SkBook} or in the case of a nonlocal perturbation
of the Dirichlet problem for a second-order elliptic
equation~\cite{GM,Gusch}. Elliptic equations of order $2m$ with
general nonlocal conditions are being investigated for the first
time.

\section{Setting of Nonlocal Problems in Bounded Domains}\label{secSetting}

\subsection{Setting of nonlocal problems}\label{subsecSetting}
Let $G\subset{\mathbb R}^2$ be a bounded domain with boundary
$\partial G$. We introduce a set ${\mathcal K}\subset\partial G$
consisting of finitely many points and assume that $\partial
G\setminus{\mathcal K}=\bigcup_{i=1}^{N}\Gamma_i$, where
$\Gamma_i$ are open (in the topology of $\partial G$)
$C^\infty$-curves. In a neighborhood of each point $g\in{\mathcal
K}$, the domain $G$ is supposed to coincide with some plane angle.

For any domain $Q$ and for integer $k\ge0$, we denote by
$W^k(Q)=W_2^k(Q)$ the Sobolev space with the norm
$$
\|u\|_{W^k(Q)}=\left(\sum\limits_{|\alpha|\le k}\int\limits_Q
|D^\alpha u|^2\,dy\right)^{1/2}
$$
(we set $W^0(Q)=L_2(Q)$ for $k=0$). For an integer $k\ge1$, we
introduce the space $W^{k-1/2}(\Gamma)$ of traces on a smooth
curve $\Gamma\subset\overline{Q}$, with the norm
\begin{equation}\label{eqTraceNormW}
\|\psi\|_{W^{k-1/2}(\Gamma)}=\inf\|u\|_{W^k(Q)}\quad (u\in
W^k(Q):\ u|_\Gamma=\psi).
\end{equation}

For any set $X\in \mathbb R^2$ having a nonempty interior, we
denote by $C_0^\infty(X)$ the set of functions infinitely
differentiable on $\overline{ X}$ and compactly supported on $X$.

Now we introduce different weighted spaces for different domains
$Q$. Consider the following cases: 1.~$Q=G$; denote $\mathcal
M=\mathcal K$; 2.~$Q$ is a plane angle $K=\{y\in\mathbb R^2:\
|\omega|<\omega_0\}$, where $0<\omega_0<\pi$; denote $\mathcal
M=\{0\}$; 3.~$Q=\{y\in\mathbb R^2:\ |\omega|<\omega_0,\
0<r<\varepsilon\}$ for some $\varepsilon>0$; denote $\mathcal
M=\{0\}$. Introduce the weighted Kondrat'ev space
$H_a^k(Q)=H_a^k(Q,\mathcal M)$ as the completion of the set
$C_0^\infty(\overline{Q}\setminus\mathcal M)$ with respect to the
norm
$$
\|u\|_{H_a^k(Q)}=\bigg(\sum\limits_{|\alpha|\le s}\int\limits_Q
\rho^{2(a+|\alpha|-k)}|D^\alpha u|^2\, dx\bigg)^{1/2},
$$
where $k\ge0$, $a\in\mathbb R$, and $\rho(y)=\dist(y,\mathcal M)$;
clearly, $\rho(y)=r$ in Cases~2 and~3 ($r$ being the polar
radius).

Denote by $H_a^{k-1/2}(Q)$ ($k\ge1$ is an integer) the space of
traces on a smooth curve $\Gamma\subset\overline Q$, with the norm
$$
\|\psi\|_{H_a^{k-1/2}(\Gamma)}=\inf\|v\|_{H_a^k(Q)}\qquad (v\in
H_a^k(Q):\ v|_\Gamma=\psi).
$$

We denote by ${\bf A}(y, D_y)$ and $B_{i\mu s}(y, D_y)$
differential operators of order $2m$ and $m_{i\mu}$ ($m_{i\mu}\le
m-1$), respectively, with complex-valued $C^\infty$-coefficients
($i=1, \dots, N;$ $\mu=1, \dots, m;$ $s=0, \dots, S_i$). In
particular, we set $ \mathbf B_{i\mu}^0u=B_{i\mu 0}(y,
D_y)u|_{\Gamma_i}. $

\begin{condition}[cf., e.g.,~\cite{LM}]\label{condRegLocalProb}
The operator ${\bf A}(y, D_y)$ is properly elliptic for all $y\in
\overline{G}$, and the system of operators $\{\mathbf
B_{i\mu}^0\}_{\mu=1}^m$ covers ${\bf A}(y, D_y)$ for all $i=1,
\dots, N$ and $y\in\overline{\Gamma_i}$.
\end{condition}
The operators ${\bf A}(y, D_y)$ and $\mathbf B_{i\mu}^0$ will
\textit{correspond to a ``local'' boundary-value problem}.

\smallskip

Now we define operators corresponding to nonlocal conditions near
the set $\mathcal K$. For ${\varepsilon}>0$ and any closed set
$\mathcal N$, denote by $\mathcal O_{\varepsilon}(\mathcal
N)=\{y\in \mathbb R^2: \dist(y, \mathcal N)<\varepsilon\}$ its
$\varepsilon$-neighborhood.

Let $\Omega_{is}$ ($i=1, \dots, N;$ $s=1, \dots, S_i$) be
$C^\infty$-diffeomorphisms taking some neighborhood ${\mathcal
O}_i$ of the curve $\overline{\Gamma_i\cap\mathcal
O_{{\varepsilon}}(\mathcal K)}$ to the set $\Omega_{is}({\mathcal
O}_i)$ in such a way that $ \Omega_{is}(\Gamma_i\cap\mathcal
O_{{\varepsilon}}(\mathcal K))\subset G $ and
$\Omega_{is}(g)\in\mathcal K$ for $
g\in\overline{\Gamma_i}\cap\mathcal K.$
 Thus, under
the transformations $\Omega_{is}$, the curves
$\Gamma_i\cap\mathcal O_{{\varepsilon}}(\mathcal K)$ are mapped
strictly inside the domain $G$, whereas the set of end points
$\overline{\Gamma_i}\cap\mathcal K$ is mapped to itself.

Let us specify the structure of the transformations $\Omega_{is}$
near the set $\mathcal K$. Denote by the symbol $\Omega_{is}^{+1}$
the transformation $\Omega_{is}:{\mathcal
O}_i\to\Omega_{is}({\mathcal O}_i)$ and by $\Omega_{is}^{-1}$ the
inverse transformation. The set of all points $
\Omega_{i_qs_q}^{\pm1}(\dots\Omega_{i_1s_1}^{\pm1}(g))\in{\mathcal
K}\ (1\le s_j\le S_{i_j},\ j=1, \dots, q), $ i.e., the set of all
points that can be obtained by consecutively applying the
transformations $\Omega_{i_js_j}^{+1}$ or $\Omega_{i_js_j}^{-1}$
(taking the points of ${\mathcal K}$ to ${\mathcal K}$) to the
point $g\in\mathcal K$, is called an {\it orbit} of the point $g$
and is denoted by $\Orb(g)$.

Clearly, for any $g, g'\in{\mathcal K}$ either $\Orb(g)=\Orb(g')$
or $\Orb(g)\cap\Orb(g')=\varnothing$. In what follows, we assume
that the set $\mathcal K$ consists of one orbit. (All results can
be directly generalized to the case in which $\mathcal K$ consists
of finitely many mutually disjoint orbits, see
Remark~\ref{remManyOrbits}.) Denote the points of the set (orbit)
$\mathcal K$ by $g_j$, $j=1, \dots, N$.

Take a small number $\varepsilon$ such that there exist
neighborhoods $\mathcal O_{\varepsilon_1}(g_j)$ of the points
$g_j\in\mathcal K$ satisfying the following conditions: 1.~$
\mathcal O_{\varepsilon_1}(g_j)\supset\mathcal
O_{\varepsilon}(g_j) $; 2.~the boundary $\partial G$ coincides
with some plane angle in the neighborhood $\mathcal
O_{\varepsilon_1}(g_j)$; 3.~$\overline{\mathcal
O_{\varepsilon_1}(g_j)}\cap\overline{\mathcal
O_{\varepsilon_1}(g_k)}=\varnothing$ for any $g_j,g_k\in\mathcal
K$, $j\ne k$; 4.~if $g_j\in\overline{\Gamma_i}$ and
$\Omega_{is}(g_j)=g_k,$ then ${\mathcal
O}_{\varepsilon}(g_j)\subset\mathcal
 O_i$ and
 $\Omega_{is}\big({\mathcal
O}_{\varepsilon}(g_j)\big)\subset{\mathcal
O}_{\varepsilon_1}(g_k).$

For each point $g_j\in\overline{\Gamma_i}\cap\mathcal K$, we fix a
transformation $Y_j: y\mapsto y'(g_j)$ which is the composition of
the shift by the vector $-\overrightarrow{Og_j}$ and the rotation
through some angle so that
$$
Y_j({\mathcal O}_{\varepsilon_1}(g_j))={\mathcal
O}_{\varepsilon_1}(0),\qquad Y_j(G\cap{\mathcal
O}_{\varepsilon_1}(g_j))=K_j\cap{\mathcal O}_{\varepsilon_1}(0),
$$
$$
Y_j(\Gamma_i\cap{\mathcal
O}_{\varepsilon_1}(g_j))=\gamma_{j\sigma}\cap{\mathcal
O}_{\varepsilon_1}(0)\quad (\sigma=1\ \text{or}\ \sigma=2),
$$
where $ K_j=\{y\in{\mathbb R}^2:\ r>0,\ |\omega|<\omega_j\},$ $
\gamma_{j\sigma}=\{y\in\mathbb R^2:\ r>0,\ \omega=(-1)^\sigma
\omega_j\},$ $(\omega,r)$ are the polar coordinates, and
$0<\omega_j<\pi$.

\begin{condition}\label{condK1}
Let $g_j\in\overline{\Gamma_i}\cap\mathcal K$ and
$\Omega_{is}(g_j)=g_k\in\mathcal K;$ then the transformation $
Y_k\circ\Omega_{is}\circ Y_j^{-1}:{\mathcal
O}_{\varepsilon}(0)\to{\mathcal O}_{\varepsilon_1}(0) $ is the
composition of a rotation and a homothety.
\end{condition}

\begin{remark}\label{remK1}
In particular, Condition~\ref{condK1}, being combined with the
assumption $\Omega_{is}(\Gamma_i\cap\mathcal
O_\varepsilon(\mathcal K))\subset G$, means that, if
$g\in\Omega_{is}(\overline{\Gamma_i}\cap\mathcal
K)\cap\overline{\Gamma_j}\cap{\mathcal K}\ne\varnothing$, then the
curves $\Omega_{is}(\overline{\Gamma_i})$ and
$\overline{\Gamma_j}$ are not tangent to each other at the point
$g$.
\end{remark}

Consider a number $\varepsilon_0$,
$0<\varepsilon_0\le\varepsilon$, satisfying the following
condition: if $g_j\in\overline{\Gamma_i}$ and
$\Omega_{is}(g_j)=g_k,$ then ${\mathcal
O}_{\varepsilon_0}(g_k)\subset \Omega_{is}\big({\mathcal
O}_{\varepsilon}(g_j)\big)$. Introduce a function $\zeta\in
C^\infty(\mathbb R^2)$ such that $ \zeta(y)=1$  for $y\in\mathcal
O_{\varepsilon_0/2}(\mathcal K)$ and $\supp\zeta\subset\mathcal
O_{\varepsilon_0}(\mathcal K)$.

Now we define nonlocal operators $\mathbf B_{i\mu}^1$ by the
formula
$$
 \mathbf B_{i\mu}^1u=\sum\limits_{s=1}^{S_i}
   \big(B_{i\mu s}(y,
   D_y)(\zeta u)\big)\big(\Omega_{is}(y)\big),\
   y\in\Gamma_i\cap\mathcal O_{\varepsilon}(\mathcal K),\qquad
 \mathbf B_{i\mu}^1u=0,\
y\in\Gamma_i\setminus\mathcal O_{\varepsilon}(\mathcal K),
$$
where $\big(B_{i\mu s}(y,
D_y)u\big)\big(\Omega_{is}(y)\big)=B_{i\mu s}(x,
D_{x})u(x)|_{x=\Omega_{is}(y)}$. Since $\mathbf B_{i\mu}^1u=0$
whenever $\supp u\subset\overline{ G}\setminus\overline{\mathcal
O_{\varepsilon_0}(\mathcal K)}$, we say that the operators
$\mathbf B_{i\mu}^1$ \textit{correspond to nonlocal terms
supported near the set} $\mathcal K$.

\smallskip

For any $\rho>0$, we denote $G_\rho=\{y\in G: \dist(y,
\partial G)>\rho\}$. Consider operators $\mathbf
B_{i\mu}^2$ satisfying the following condition
(cf.~\cite{SkMs86,SkJMAA,GurRJMP03}).
\begin{condition}\label{condSeparK23}
There exist numbers $\varkappa_1>\varkappa_2>0$ and $\rho>0$ such
that the following inequalities hold{\rm :}
\begin{equation}\label{eqSeparK23'}
  \|\mathbf B^2_{i\mu}u\|_{W^{2m-m_{i\mu}-1/2}(\Gamma_i)}\le c_1
  \|u\|_{W^{2m}(G\setminus\overline{\mathcal O_{\varkappa_1}(\mathcal
  K)})},
\end{equation}
\begin{equation}\label{eqSeparK23''}
  \|\mathbf B^2_{i\mu}u\|_{W^{2m-m_{i\mu}-1/2}
   (\Gamma_i\setminus\overline{\mathcal O_{\varkappa_2}(\mathcal K)})}\le
  c_2 \|u\|_{W^{2m}(G_\rho)}.
\end{equation}
\end{condition}

\begin{remark}
In~\eqref{eqSeparK23'}, \eqref{eqSeparK23''}, and throughout the
paper, we denote by $c,c_1,c_2,\dots$ and $k_1,k_2,\dots$ positive
constants which do not depend on the functions entering the
corresponding inequality.
\end{remark}

We assume that
Conditions~\ref{condRegLocalProb}--\ref{condSeparK23} hold
throughout, including the formulation of lemmas.

It follows from~\eqref{eqSeparK23'} that $\mathbf B_{i\mu}^2u=0$
whenever $\supp u\subset \mathcal O_{\varkappa_1}(\mathcal K)$.
For this reason, we say that the operators $\mathbf B_{i\mu}^2$
\textit{correspond to nonlocal terms supported outside the set}
$\mathcal K$.

We study the following nonlocal elliptic problem:
\begin{align}
 {\bf A}(y, D_y)u=f(y) \quad &(y\in G),\label{eqPinG}\\
     \mathbf B_{i\mu}u\equiv\mathbf B_{i\mu}^0 u+\mathbf B_{i\mu}^1 u+\mathbf B_{i\mu}^2 u=
   0\quad
    &(y\in \Gamma_i;\ i=1, \dots, N;\ \mu=1, \dots, m),\label{eqBinG}
\end{align}
where $f\in L_2(G)$. Introduce the space $W^m(G,\mathbf B)$
consisting of functions $u\in W^m(G)$ that satisfy homogeneous
nonlocal conditions~(\ref{eqBinG}). Consider the unbounded
operator $\mathbf P: \Dom(\mathbf P)\subset L_2(G)\to L_2(G)$
given by
$$
 \mathbf P u=\mathbf A(y, D_y)u,\qquad u\in \Dom(\mathbf P)=\{u\in
 W^m(G,\mathbf B):\ \mathbf A(y, D_y) u\in L_2(G)\}.
$$

\begin{definition}\label{defGenSol1}
A function $u$ is called a {\it generalized solution} of
problem~(\ref{eqPinG}), (\ref{eqBinG}) with right-hand side $f\in
L_2(G)$ if $u\in \Dom(\mathbf P)$ and $
 \mathbf P u=f.
$
\end{definition}
Equivalent definition of a generalized solution can be given in
terms of an integral identity~\cite{GurMatZam05}.

Note that generalized solutions a priori belong to the space
$W^m(G)$, whereas Condition~\ref{condSeparK23} is formulated for
functions belonging to the space $W^{2m}$ outside the set
$\mathcal K$. Such a formulation can be justified by the following
result (see Lemma~2.1 in~\cite{GurMatZam05} and Lemma~5.1
in~\cite{GurRJMP04}).

\begin{lemma}\label{lSmoothOutsideK}
Let $u\in W^{m}(G)$ be a generalized solution of
problem~\eqref{eqPinG}, \eqref{eqBinG} with right-hand side $f\in
W^k(G)$. Then
$$
\|u\|_{W^{k+2m}(G\setminus\overline{\mathcal O_\delta(\mathcal
K)})}\le c_{\delta}\left(\|f\|_{W^k(G\setminus\overline{\mathcal
O_{\delta_1}(\mathcal K)})}+\|u\|_{L_2(G)}\right)\quad
\forall\delta>0,
$$
where $\delta_1=\delta_1(\delta)>0$ and $c_{\delta}>0$ do not
depend on $u$.
\end{lemma}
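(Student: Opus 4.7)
The plan is to derive the estimate by combining classical local (interior and boundary) a priori bounds for the \emph{regular} elliptic problem $\{\mathbf A,\mathbf B^0_{i\mu}\}$ (which is regular by Condition~\ref{condRegLocalProb}) with a finite bootstrap that absorbs the two nonlocal contributions. I would cover $\overline{G\setminus\mathcal O_\delta(\mathcal K)}$ by finitely many charts, namely interior balls together with boundary half-balls centered at points of $\Gamma_i\setminus\overline{\mathcal O_\delta(\mathcal K)}$, and in each chart rewrite the boundary condition as
\[
\mathbf B^0_{i\mu}u=-\mathbf B^1_{i\mu}u-\mathbf B^2_{i\mu}u,
\]
so that the nonlocal pieces appear as known boundary data for the underlying local problem. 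The desired global bound then follows by summing the local estimates over the finite cover.

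For interior charts, classical interior elliptic regularity for $\mathbf Au=f$ immediately yields the $W^{k+2m}$ bound in terms of $\|f\|_{W^k}$ and a lower-order norm of $u$. For boundary charts centered at a point $y_0\in\Gamma_i$ with $\dist(y_0,\mathcal K)>\varepsilon_0$, the cutoff $\zeta$ vanishes near $y_0$, so $\mathbf B^1_{i\mu}u\equiv0$ on the chart; the piece $\mathbf B^2_{i\mu}u$ is controlled by~\eqref{eqSeparK23''} (together with its straightforward higher-order analogues coming from the smoothness of the coefficients of $\mathbf B^2_{i\mu}$), and the standard half-ball a priori estimate for the regular problem $\{\mathbf A,\mathbf B^0_{i\mu}\}$ gives the local bound.

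The delicate case is a boundary chart centered at $y_0\in\Gamma_i$ with $\delta<\dist(y_0,\mathcal K)\le\varepsilon_0$. Here $\mathbf B^1_{i\mu}u(y)$ involves derivatives of $u$ of order $\le m_{i\mu}\le m-1$ at the shifted point $\Omega_{is}(y)$, which by construction lies strictly inside $G$. By Condition~\ref{condK1} the map $\Omega_{is}$ is a rotation composed with a homothety near $\mathcal K$, and by Remark~\ref{remK1} its image curve is not tangent to $\partial G$ at the target orbit point; hence $\Omega_{is}(y)$ lies at distance comparable to $\dist(y,\mathcal K)$ from $\partial G$ and at distance at most $C\,\dist(y,\mathcal K)$ from $\mathcal K$. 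I would then bound $\mathbf B^1_{i\mu}u$ on $\Gamma_i$ at distance $\delta$ from $\mathcal K$ by a purely interior estimate for $u$ on a ball around $\Omega_{is}(y)$ whose radius is proportional to $\dist(\Omega_{is}(y),\partial G)$.

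The main obstacle is that the shifted points $\Omega_{is}(y)$ may lie closer to $\mathcal K$ than $y$ itself (when $\Omega_{is}$ is contracting), and after further iteration produce still closer points. I would resolve this cascade by induction on $k$ (only finitely many derivatives are ever needed) together with a finite-step orbit argument: since $\mathcal K$ is finite and the dilation factors in Condition~\ref{condK1} are uniformly bounded, one can choose $\delta_1=\delta_1(\delta)>0$ small enough so that every finite composition of the $\Omega_{is}^{\pm1}$ that can actually arise maps $\overline{G\setminus\mathcal O_\delta(\mathcal K)}$ into $G\setminus\overline{\mathcal O_{\delta_1}(\mathcal K)}$. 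Summing the resulting local estimates over the finite cover then produces the global bound with a constant depending only on $\delta$.
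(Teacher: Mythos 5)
Your overall strategy is the right one and, as far as this paper goes, the only comparison available: the paper does not prove Lemma~\ref{lSmoothOutsideK} itself but cites Lemma~2.1 of~\cite{GurMatZam05} and Lemma~5.1 of~\cite{GurRJMP04}, and the standard proof there is exactly your scheme --- interior elliptic regularity for $\mathbf A u=f$ combined with local estimates near $\Gamma_i\setminus\overline{\mathcal O_\delta(\mathcal K)}$ for the regular local problem $\{\mathbf A,\mathbf B^0_{i\mu}\}$, with $-\mathbf B^1_{i\mu}u-\mathbf B^2_{i\mu}u$ treated as boundary data and with the key fact that $\Omega_{is}$ maps $\Gamma_i\cap\mathcal O_\varepsilon(\mathcal K)$ strictly inside $G$. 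Two points need repair, however. First, $\mathbf B^2_{i\mu}$ is an abstract operator: it has no coefficients, and Condition~\ref{condSeparK23} gives only the order-$2m$ bounds \eqref{eqSeparK23'}, \eqref{eqSeparK23''}. So the ``straightforward higher-order analogues'' you invoke cannot be derived from the hypotheses of this paper; for $k\ge1$ they have to be assumed (as in the cited papers), whereas for $k=0$ --- the only case used in this paper --- estimate \eqref{eqSeparK23'} is exactly what is needed and your argument closes. (A small slip in the same place: $\mathbf B^1_{i\mu}u$ vanishes on $\Gamma_i\setminus\mathcal O_\varepsilon(\mathcal K)$ by definition; for $y\in\Gamma_i\cap\mathcal O_\varepsilon(\mathcal K)$ what matters is where $\Omega_{is}(y)$ lies, not whether $\zeta$ vanishes near $y$.)

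Second, the ``cascade'' that your last paragraph is built around does not occur. The terms $\mathbf B^1_{i\mu}u(y)$ involve derivatives of $\zeta u$ at the interior points $\Omega_{is}(y)$, and these are controlled by purely interior regularity for $\mathbf A u=f$; the boundary, hence nonlocal, conditions are never re-applied at those interior points, so no compositions $\Omega_{i_js_j}^{\pm1}$ ever enter. All that is required is that for $y\in\Gamma_i$ with $\dist(y,\mathcal K)\ge\delta$ the points $\Omega_{is}(y)$ stay at distance $\ge c(\delta)>0$ from $\partial G$ (which follows from $\Omega_{is}(\Gamma_i\cap\mathcal O_\varepsilon(\mathcal K))\subset G$ together with Condition~\ref{condK1} and Remark~\ref{remK1}, or simply by compactness of the image curve away from its endpoints), after which $\delta_1(\delta)$ is fixed so that these interior balls, $G_\rho$, and $G\setminus\overline{\mathcal O_{\varkappa_1}(\mathcal K)}$ all lie in $G\setminus\overline{\mathcal O_{\delta_1}(\mathcal K)}$. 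Finally, since a priori $u\in W^m(G)$ only, in the boundary charts you must quote a local regularity theorem for the regular problem $\{\mathbf A,\mathbf B^0_{i\mu}\}$, not merely an a priori estimate valid for functions already in $W^{k+2m}$; with that understood, your finite bootstrap in $k$ is sound.
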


\begin{theorem}[see Theorem~2.1 in~\cite{GurMatZam05}]\label{thTheor2.1GurMatZam04}
Let Conditions~$\ref{condRegLocalProb}$--$\ref{condSeparK23}$
hold. Then the operator $\mathbf P$ has the Fredholm
property.\footnote{See Definition~\ref{defFredholm}.}
\end{theorem}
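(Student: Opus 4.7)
The plan is to deduce the Fredholm property of the unbounded operator $\mathbf{P}$ on $L_2(G)$ from the Fredholm property of an auxiliary \emph{bounded} operator acting between weighted Kondrat'ev spaces. The Fredholm theory for bounded nonlocal elliptic operators in Kondrat'ev spaces is well developed (Skubachevskii~\cite{SkMs86,SkBook}), so the heart of the matter is to construct such a bounded companion and to verify that every function in $\Dom(\mathbf{P})$ automatically possesses the requisite weighted regularity near $\mathcal{K}$.

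First I would introduce the bounded operator
$$
\mathbf{L}_a\colon H_a^{2m}(G)\to H_a^0(G)\times\prod_{i,\mu}H_a^{2m-m_{i\mu}-1/2}(\Gamma_i),\qquad \mathbf{L}_a u=\bigl(\mathbf{A}u,\,\{\mathbf{B}_{i\mu}u\}\bigr),
$$
and choose the weight exponent $a$ (the natural choice being $a=m$) so that (i)~the operator pencils associated with the model nonlocal problems in the plane angles $K_j$ have no eigenvalues on the critical weight line, whence $\mathbf{L}_a$ is Fredholm by Skubachevskii's theorem, and (ii)~the weighted scale is compatible with the Sobolev scale in the sense that $L_2(G)\hookrightarrow H_a^0(G)$ near $\mathcal{K}$ and $H_a^{2m}(G)\subset W^m(G)$.

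The central step is then to establish the a priori estimate
$$
\|u\|_{H_a^{2m}(G)}\le c\bigl(\|\mathbf{A}u\|_{L_2(G)}+\|u\|_{L_2(G)}\bigr),\qquad u\in\Dom(\mathbf{P}).
$$
Outside any neighborhood of $\mathcal{K}$ this follows from Lemma~\ref{lSmoothOutsideK}. Near each conjugation point $g_j$ I would flatten the domain by $Y_j$, exploit Condition~\ref{condK1} to reduce the nonlocal shifts to rotation-homotheties, and invoke the weighted a priori estimate for the resulting model nonlocal problem in the plane angle $K_j$. Since $\mathbf{A}u\in L_2$ implies $\mathbf{A}u\in H_a^0$ near $\mathcal{K}$ for the chosen $a$, and since Condition~\ref{condSeparK23} confines the nonlocal terms $\mathbf{B}_{i\mu}^2$ to a regime where they act only between $W^{2m}$-spaces away from $\mathcal{K}$, the local model estimates can be patched with Lemma~\ref{lSmoothOutsideK} into the global bound above.

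With this estimate in hand, the Fredholm property follows by standard compactness: the compact embedding $H_a^{2m}(G)\hookrightarrow L_2(G)$ combined with the a priori estimate yields, via the usual Peetre-type argument, that $\ker\mathbf{P}$ is finite dimensional and that $\mathbf{P}$ has closed range; the codimension of the range is then finite because its annihilator in $L_2(G)$ can be identified with the kernel of the formal adjoint nonlocal problem, which is again finite dimensional by the Fredholm property of a companion auxiliary operator (essentially the adjoint of $\mathbf{L}_a$ acting on the dual weighted scale). The main obstacle is the central step: showing that $\mathbf{A}u\in L_2$ together with only $u\in W^m(G)$ forces $u$ into $H_a^{2m}$ in a neighborhood of $\mathcal{K}$. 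This hinges simultaneously on Condition~\ref{condK1} (so that the nonlocal shifts $\Omega_{is}$ respect the Kondrat'ev scale near the orbit) and on the spectral condition that the model pencils carry no eigenvalues on the critical line; without either, the auxiliary operator $\mathbf{L}_a$ fails to be Fredholm and the entire reduction collapses.
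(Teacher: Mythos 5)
Note first that the paper does not prove this theorem at all: it is imported verbatim from Theorem~2.1 of~\cite{GurMatZam05}, and the machinery of Sec.~\ref{secPertLOT} (in particular Lemma~\ref{lIndQ=IndP}) \emph{uses} the Fredholm property of $\mathbf P$ rather than re-deriving it. So your proposal can only be judged on its own merits, and its central step fails. You want the estimate $\|u\|_{H_a^{2m}(G)}\le c\bigl(\|\mathbf A u\|_{L_2(G)}+\|u\|_{L_2(G)}\bigr)$ for all $u\in\Dom(\mathbf P)$ with a weight $a\le m$ (the constraint $H_a^{2m}(G)\subset W^m(G)$ forces $a\le m$, and you even suggest $a=m$). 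But $\Dom(\mathbf P)\not\subset H_a^{2m}(G)$ for any such $a$, so the left-hand side is infinite for legitimate domain elements. For $m\ge 2$, any $u\in\Dom(\mathbf P)$ that does not vanish at a point $g_j\in\mathcal K$ already makes the zero-order term $\int\rho^{2(a-2m)}|u|^2\,dy$ diverge once $a\le 2m-1$; for $m=1$, generalized solutions may contain components $r^{i\lambda_0}\varphi(\omega)$ with $0<\Im\lambda_0<1$ solving the homogeneous model problem (the power-law singularities of~\cite{SkMs86}), which lie in $W^1(G)$ with $\mathbf A u\in L_2(G)$ but not in $H_1^2(G)$ near $\mathcal K$. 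The correct statement is the one in Lemma~\ref{lAsymp}: $u$ splits into cut-off Taylor polynomials of degree $\le m-2$ plus $v\in H_{b+1}^{2m}(G)$ with $b+1\in(m,m+1)$, so $u$ belongs only to $H_{a+m}^{2m}(G)$ with weight exponent $a+m\in(2m-1,2m)$ --- and for that weight $H_{a+m}^{2m}(G)\not\subset W^m(G)$. Your two requirements (domain inclusion into $H_a^{2m}$ versus $H_a^{2m}\subset W^m$) are mutually exclusive, which is exactly why the paper never argues this way: in the analogous situation it works with the \emph{restricted} weighted operator $\dot{\mathbf Q}\subset\mathbf Q$ of~\eqref{eqOpDotQ} and passes to the larger $L_2$-domain by the abstract extension result of Theorem~\ref{thAppBdotPP}, not by an a priori estimate in a weighted norm valid on the whole domain.

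Two further points would also need repair even after fixing the weight. First, closedness of $\mathbf P$ is not free: Theorem~\ref{thAppBLem7.1Kr} presupposes a closed operator, so the "usual Peetre-type argument" does not by itself deliver closed range and finite kernel; some substitute such as Theorem~\ref{thAppBdotPP} is needed. Second, the cokernel step "identify the annihilator of $\mathcal R(\mathbf P)$ with the kernel of the formal adjoint nonlocal problem" is not justified: adjoints of nonlocal problems are not covered by the theory you invoke. The workable route (used in part~4 of the proof of Lemma~\ref{lIndQ=IndP}) is to extract finitely many solvability conditions $(f,F_j)_{H_a^0(G)}=0$ from the Fredholm property of the bounded weighted operator $\mathbf L$ and convert them into $L_2(G)$-orthogonality conditions via Riesz' theorem and the density of $L_2(G)$ in $H_a^0(G)$.
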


The aim of this paper is to investigate the influence of
lower-order terms in~\eqref{eqPinG} and nonlocal operators
$\mathbf B_{i\mu}^1$ and $\mathbf B_{i\mu}^2$ in~\eqref{eqBinG}
upon the index of the operator $\mathbf P$.

\subsection{Nonlocal Problems near the Set $\mathcal
K$}\label{subsecSettingNearK}

When studying problem~(\ref{eqPinG}), (\ref{eqBinG}), one must pay
particular attention to the behavior of solutions near the set
${\mathcal K}$ of conjugation points. Let us consider the
corresponding model problems. Denote by $u_j(y)$ the function
$u(y)$ for $y\in{\mathcal O}_{\varepsilon_1}(g_j)$. If
$g_j\in\overline{\Gamma_i},$ $y\in{\mathcal
O}_{\varepsilon}(g_j),$ and $\Omega_{is}(y)\in{\mathcal
O}_{\varepsilon_1}(g_k),$ then we denote the function
$u(\Omega_{is}(y))$ by $u_k(\Omega_{is}(y))$. In this notation,
nonlocal problem~(\ref{eqPinG}), (\ref{eqBinG}) acquires the
following form in the $\varepsilon$-neighborhood of the set
(orbit) $\mathcal K$:
\begin{gather*}
 \mathbf A(y, D_y) u_j=f(y) \quad (y\in\mathcal O_\varepsilon(g_j)\cap
 G),\\
\begin{aligned}
B_{i\mu 0}(y, D_y)u_j(y)|_{\mathcal
O_\varepsilon(g_j)\cap\Gamma_i}+ \sum\limits_{s=1}^{S_i}
\big(B_{i\mu s}(y,D_y)(\zeta
u_k)\big)\big(\Omega_{is}(y)\big)\big|_{\mathcal
O_\varepsilon(g_j)\cap\Gamma_i}
=f_{i\mu}(y) \\
\big(y\in \mathcal O_\varepsilon(g_j)\cap\Gamma_i;\ i\in\{1\le
i\le N: g_j\in\overline{\Gamma_i}\};\ j=1, \dots, N;\ \mu=1,
\dots, m\big),
\end{aligned}
\end{gather*}
where $f_{i\mu}=-\mathbf B_{i\mu}^2u$.

Let $y\mapsto y'(g_j)$ be the change of variables described in
Sec.~\ref{subsecSetting}. Denote $ K_j^\varepsilon=K_j\cap\mathcal
O_\varepsilon(0)$, $
\gamma_{j\sigma}^\varepsilon=\gamma_{j\sigma}\cap\mathcal
O_\varepsilon(0). $ Introduce the functions
$$
U_j(y')=u_j(y(y')),\ f_j(y')=f(y(y')),\ y'\in
K_j^\varepsilon,\qquad f_{j\sigma\mu}(y')=f_{i\mu}(y(y')),\
y'\in\gamma_{j\sigma}^\varepsilon,
$$
where $\sigma=1$ $(\sigma=2)$ if, under the transformation
$y\mapsto y'(g_j)$, the curve $\Gamma_i$ is mapped to the side
$\gamma_{j1}$ ($\gamma_{j2}$) of the angle $K_j$. Denote $y'$ by
$y$ again. Then, by virtue of Condition~\ref{condK1},
problem~(\ref{eqPinG}), (\ref{eqBinG}) acquires the form
\begin{gather}
  \mathbf A_j(y, D_y)U_j=f_{j}(y) \quad (y\in
  K_{j}^\varepsilon),\label{eqPinK}\\
 \sum\limits_{k,s}
       (B_{j\sigma\mu ks}(y, D_y)U_k)({\mathcal G}_{j\sigma ks}y)
    =f_{j\sigma\mu}(y) \quad (y\in\gamma_{j\sigma}^\varepsilon);\label{eqBinK}
\end{gather}
here  $j, k=1, \dots, N;$ $\sigma=1, 2;$ $\mu=1, \dots, m;$ $s=0,
\dots, S_{j\sigma k}$; ${\mathbf A}_j(y, D_y)$ and $B_{j\sigma\mu
ks}(y, D_y)$ are differential operators of order $2m$ and
$m_{j\sigma\mu}$ ($m_{j\sigma\mu}\le m-1$), respectively, with
$C^\infty$ complex-valued coefficients; ${\mathcal G}_{j\sigma
ks}$ is the operator of rotation by an angle~$\omega_{j\sigma ks}$
and the homothety with a coefficient~$\chi_{j\sigma ks}$
($\chi_{j\sigma ks}>0$). Moreover, $ |(-1)^\sigma
b_{j}+\omega_{j\sigma ks}|<b_{k}$ for $(k,s)\ne(j,0) $ (cf.
Remark~\ref{remK1}) and $ \omega_{j\sigma j0}=0$, $\chi_{j\sigma
j0}=1 $ (i.e., ${\mathcal G}_{j\sigma j0}y\equiv y$).

Set $D_\chi=2\max\{\chi_{j\sigma ks}\}.$ The following lemma
establishes the regularity property for solutions of nonlocal
problems near the set $\mathcal K$.
\begin{lemma}[see\footnote{Lemma~2.3 in~\cite{GurMatZam05} was formulated
for $a>2m-1$. However, its proof remains true for any $a\in\mathbb
R$.} Lemma~2.3 in~\cite{GurMatZam05}]\label{lAppL2.3GurMatZam05}
Let $(U_1,\dots,U_N)$ be a solution of problem~\eqref{eqPinK},
\eqref{eqBinK} such that
$$
U_j\in W^{2m}(K_{j}^{D_\chi \varepsilon}\cap\{|y|>\delta\})\quad
\forall \delta>0,\qquad U_j\in H_{a-2m}^0(K_j^{D_\chi
\varepsilon}),
$$
where $a\in\mathbb R$. Suppose that $ f_j\in
H_{a}^{0}(K_j^{\varepsilon})$ and $ f_{j\sigma\mu}\in
H_{a}^{2m-m_{j\sigma\mu}-1/2}(\gamma_{j\sigma}^\varepsilon). $
Then
$$
\sum\limits_j\|U_j\|_{H_a^{2m}(K_j^{\varepsilon/D_\chi^3})}\le
c\sum\limits_j\Big(\|f_j\|_{H_{a}^{0}(K_j^{\varepsilon})}+\sum\limits_{\sigma,\mu}
\|f_{j\sigma\mu}\|_{H_{a}^{2m-m_{j\sigma\mu}-1/2}(\gamma_{j\sigma}^\varepsilon)}+
\|U_j\|_{H_{a-2m}^0(K_{j}^{\varepsilon})}\Big).
$$
\end{lemma}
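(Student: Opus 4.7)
The plan is to prove this local weighted regularity estimate via a dyadic rescaling argument, exploiting the scale-invariance of the principal parts of all relevant operators together with the strict interior mapping of the nonlocal transformations.

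First I would introduce a partition of unity $\{\eta_n\}_{n\ge n_0}$ with $\eta_n$ supported in the dyadic shell $\{2^{-n-1}\varepsilon\le r\le 2^{-n+1}\varepsilon\}$ and set $V_j^{(n)}(z)=(\eta_n U_j)(2^{-n}\varepsilon z)$, which lives on a fixed reference annulus $A_j=\{1/2\le|z|\le 2\}\cap K_j$. Because the homothety rescaling by $2^{-n}\varepsilon$ fixes the cone $K_j$ and commutes with the rotation-homothety maps $\mathcal G_{j\sigma ks}$, the collection $\{V_j^{(n)}\}_j$ satisfies a nonlocal problem on $A_j$ of the same form as \eqref{eqPinK}, \eqref{eqBinK} but with principal parts frozen and lower-order coefficients scaled by factors $2^{-n}$; the commutators with $\eta_n$ contribute lower-order terms supported on adjacent shells.

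Next, on the reference annulus $A_j$ I would apply the standard Agmon--Douglis--Nirenberg a priori estimate for the \emph{local} boundary value problem corresponding to the diagonal terms $(k,s)=(j,0)$ (which is properly elliptic with covering boundary operators by Condition~\ref{condRegLocalProb}), moving all off-diagonal nonlocal terms to the right-hand side. The decisive point is then handling these off-diagonal terms: by the strict inequality $|(-1)^\sigma b_j+\omega_{j\sigma ks}|<b_k$ for $(k,s)\ne (j,0)$ (see Remark~\ref{remK1}), the curve $\mathcal G_{j\sigma ks}\gamma_{j\sigma}$ lies strictly inside $K_k$ at a positive distance from $\partial K_k$. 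Consequently, the trace $(B_{j\sigma\mu ks}(y,D_y)U_k)(\mathcal G_{j\sigma ks}\cdot)$ on $\gamma_{j\sigma}$ is controlled by the $W^{2m}$-norm of $U_k$ on a \emph{strictly interior} neighborhood of the transformed curve, which by rescaling is bounded by $\|V_k^{(n')}\|_{W^{2m}(A_k)}$ for some index $n'=n-\log_2\chi_{j\sigma ks}+O(1)$. Since $\chi_{j\sigma ks}\le D_\chi/2$ is uniformly bounded above and below, $|n-n'|=O(1)$.

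Finally, I would rescale back: the $W^{2m}$-norm of $V_j^{(n)}$ on $A_j$, multiplied by the correct power of $2^{-n}\varepsilon$, reproduces the contribution of the shell $\{r\sim 2^{-n}\varepsilon\}$ to $\|U_j\|_{H_a^{2m}}$, and similarly for the $L_2$-norm of $V_j^{(n)}$, which reproduces the contribution to $\|U_j\|_{H_{a-2m}^0}$, and for the boundary data. Summing over $n$ and absorbing the at-most-three bounded-size scale shifts induced by the couplings $\mathcal G_{j\sigma ks}$ (this is where the factor $D_\chi^3$ enters: the shell at scale $n$ pulls in shells at scales $n\pm\log_2 D_\chi$, so the estimate on $K_j^{\varepsilon/D_\chi^3}$ requires data on all of $K_j^\varepsilon$) produces exactly the inequality claimed in the lemma.

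The main obstacle is the cross-sector, cross-scale coupling generated by the nonlocal terms: the estimate on sector $j$ at scale $n$ involves sector $k$ at a different scale $n'$, so one has to arrange the summation so that these couplings do not accumulate into divergent sums or force the left-hand domain to shrink indefinitely. The strictness of the angle inequality (giving truly interior, not merely tangential, transformed curves) is what converts these couplings into interior estimates with controlled constants independent of $n$; the uniform bound $\chi_{j\sigma ks}\le D_\chi/2$ is what keeps the induced scale shift bounded by a fixed number of dyadic steps, yielding the explicit shrinkage factor $D_\chi^3$.
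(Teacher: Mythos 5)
The paper itself contains no proof of this lemma: it is quoted verbatim from Lemma~2.3 of~\cite{GurMatZam05}, with only a footnote observing that the proof there extends from $a>2m-1$ to all $a\in\mathbb R$. So the comparison is with the standard argument of the cited literature, and your overall architecture --- dyadic shells, rescaling to a fixed reference annulus, Agmon--Douglis--Nirenberg estimates for the diagonal local problem $(k,s)=(j,0)$ guaranteed by Condition~\ref{condRegLocalProb}, exploitation of the strict angle inequality $|(-1)^\sigma b_j+\omega_{j\sigma ks}|<b_k$ for the nonlocal couplings, and weighted resummation --- is exactly the route by which such regularity statements are proved there; no spectral condition on $\tilde{\mathcal L}(\lambda)$ is needed for this kind of estimate, consistent with your scheme.

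There is, however, one step where your written chain would fail as it stands. You bound the nonlocal trace $(B_{j\sigma\mu ks}(y,D_y)U_k)({\mathcal G}_{j\sigma ks}\,\cdot\,)$ by $\|V_k^{(n')}\|_{W^{2m}(A_k)}$, i.e.\ by the same full-order quantity whose weighted sum constitutes the left-hand side of the lemma; since the elliptic a priori constant is not small, these terms cannot be absorbed after summation, and the scheme is circular. The decisive point --- and the real content of the strict angle inequality noted in Remark~\ref{remK1} --- is that ${\mathcal G}_{j\sigma ks}\gamma_{j\sigma}$ stays at a positive angular distance from $\partial K_k$, so on a neighborhood of the transformed curve one may apply the \emph{interior} a priori estimate for the elliptic operator $\mathbf A_k$ alone (no boundary or nonlocal conditions re-enter), replacing the $W^{2m}$-norm of $U_k$ there by $c\big(\|f_k\|_{L_2}+\|U_k\|_{L_2}\big)$ on a slightly larger interior set; after multiplication by the appropriate power of $2^{-n'}\varepsilon$ these contributions are absorbed into $\|f_k\|_{H_a^0(K_k^{\varepsilon})}+\|U_k\|_{H^0_{a-2m}(K_k^{\varepsilon})}$, which is an admissible right-hand side, and the cross-sector, cross-scale coupling terminates after one step instead of propagating. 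Your closing paragraph gestures at ``interior estimates'' but never performs this reduction from the full-order norm of $U_k$ to data plus a low-order norm, and without it the inequality cannot be closed; with it (plus the routine homogeneity bookkeeping for the weighted trace norms and the finitely many shells on which the variable coefficients are not yet close to their frozen values, and with the understanding that the precise shrinkage factor $D_\chi^3$ merely records the finitely many radial enlargements used), your argument is sound.
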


We write the principal homogeneous parts of the operators $\mathbf
A_j(0, D_y)$ and $B_{j\sigma\mu ks}(0, D_y)$ in the polar
coordinates, $ r^{-2m}\tilde{\mathcal A}_j (\omega, D_\omega,
rD_r),$ $r^{-m_{j\sigma\mu}}\tilde B_{j\sigma\mu ks}(\omega,
D_\omega, rD_r), $ respectively, and consider the analytic
operator-valued function
$$
\tilde{\mathcal L}(\lambda): \prod_{j=1}^{N} W^{l+2m}(-\omega_j,
 \omega_j)\to \prod_{j=1}^{N}\big(W^l(-\omega_j, \omega_j) \times{\mathbb
 C}^{2m}\big),
$$
$$
 \tilde{\mathcal L}(\lambda)\varphi=\big\{\tilde{\mathcal A}_j(\omega, D_\omega, \lambda)\varphi_j,\
  \sum\limits_{k,s} (\chi_{j\sigma ks})^{i\lambda-m_{j\sigma\mu}}
 {\tilde B}_{j\sigma\mu ks}(\omega, D_\omega, \lambda)
              \varphi_k(\omega+\omega_{j\sigma ks})|_{\omega=(-1)^\sigma
              \omega_j}\big\}.
$$
Basic definitions and facts concerning eigenvalues, eigenvectors,
and associate vectors of analytic operator-valued functions can be
found in~\cite{GS}. In the sequel, it will be on principle that
the spectrum of the operator $\tilde{\mathcal L}(\lambda)$ is
discrete (see Lemma~2.1 in~\cite{SkDu90}).

\section{Perturbations by Lower-Order Terms}\label{secPertLOT}

\subsection{Reduction to weighted spaces}
Introduce the lower-order terms operator
\begin{equation}\label{eqOpA'}
A'(y,D_y)=\sum\limits_{|\alpha|\le 2m-1}a_\alpha(y)D^\alpha,
\end{equation}
where $a_\alpha\in C^\infty(\mathbb R^2)$. Consider the perturbed
operator $\mathbf P': \Dom(\mathbf P')\subset L_2(G)\to L_2(G)$
given by
$$
 \mathbf P' u=\mathbf A(y, D_y)u+A'(y,D_y)u,\qquad u\in \Dom(\mathbf P')=\{u\in
 W^m(G,\mathbf B):\ \mathbf A(y, D_y)u+A'(y,D_y)u\in L_2(G)\}.
$$

By Theorem~\ref{thTheor2.1GurMatZam04}, the unbounded operator
$\mathbf P'$ has the Fredholm property (just as $\mathbf P$ has).
The main result of this section (to be proved in
Sec.~\ref{subsecCompLowerTermWeighted}) is as follows.
\begin{theorem}\label{thIndP=IndP'} Let
Conditions~{\rm \ref{condRegLocalProb}--\ref{condSeparK23}} hold.
Then $\ind {\bf P'}=\ind{\bf P}$.
\end{theorem}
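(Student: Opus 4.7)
The plan is to relate both $\mathbf P$ and $\mathbf P'$ to bounded operators acting on weighted Kondrat'ev spaces, where lower-order terms \emph{are} compact, and then to invoke the invariance of the Fredholm index under compact perturbations.

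First, I would fix a weight exponent $a$ such that $H_a^{2m}(G)\hookrightarrow W^m(G)$ continuously and the line $\{\Im\lambda = a - 2m + 1\}$ contains no eigenvalue of the analytic operator-valued function $\tilde{\mathcal L}(\lambda)$ (the latter is possible because the spectrum is discrete, as noted after Lemma~\ref{lAppL2.3GurMatZam05}). For such $a$, consider the bounded operator
$$
\mathcal L_a^0 u = \big(\mathbf A(y,D_y) u,\; \{\mathbf B_{i\mu} u\}\big),\qquad u \in H_a^{2m}(G),
$$
with target $H_a^0(G)\times\prod_{i,\mu} H_a^{2m-m_{i\mu}-1/2}(\Gamma_i)$. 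By the weighted analogue of Theorem~\ref{thTheor2.1GurMatZam04}, $\mathcal L_a^0$ has the Fredholm property; write $\mathcal L_a$ for its counterpart including the lower-order term $A'(y,D_y)$ in the first component. Since $A'$ has order strictly less than $2m$, it does not enter the symbol $\tilde{\mathcal L}(\lambda)$, so the \emph{same} weight $a$ serves for $\mathcal L_a^0$ and $\mathcal L_a$.

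The second step is the identity $\ind\mathbf P = \ind\mathcal L_a^0$ (and similarly $\ind\mathbf P' = \ind\mathcal L_a$). The inclusion $\ker\mathcal L_a^0 \subset \ker\mathbf P$ follows from $H_a^{2m}(G)\hookrightarrow W^m(G)$; for the reverse, Lemma~\ref{lSmoothOutsideK} yields $W^{2m}$-regularity away from $\mathcal K$, and Lemma~\ref{lAppL2.3GurMatZam05} applied near each $g_j\in\mathcal K$ with zero right-hand side upgrades any $u\in\ker\mathbf P$ into $H_a^{2m}(G)$. For the cokernels, I would construct a common finite-dimensional complement chosen from $C_0^\infty(G\setminus\mathcal K)$ (which is dense in both targets), and check that it simultaneously complements $\mathbf P(\Dom\mathbf P)$ in $L_2(G)$ and $\mathcal L_a^0(H_a^{2m}(G))$ in its target.

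The analytic core of the argument is the compactness of $A'(y,D_y):H_a^{2m}(G)\to H_a^0(G)$. For $|\alpha|\le 2m-1$,
$$
\rho^{a}|D^\alpha u| \;=\; \rho^{\,2m-|\alpha|}\cdot\rho^{a+|\alpha|-2m}|D^\alpha u|,
$$
where $2m-|\alpha|\ge 1>0$, so the smoothness of the coefficients $a_\alpha$ together with the compact embedding $H_a^{2m}(G)\hookrightarrow H_{a-\delta}^{2m-1}(G)$ for small $\delta>0$ (standard Rellich away from $\mathcal K$ combined with the $\rho$-gain in a neighborhood of $\mathcal K$) gives the required compactness. Hence $\mathcal L_a - \mathcal L_a^0$ is a compact operator, so $\ind\mathcal L_a = \ind\mathcal L_a^0$; combined with the previous step this yields $\ind\mathbf P' = \ind\mathbf P$.

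The main obstacle, in my view, is the second step: establishing the index identities between the unbounded $L_2$-operators and the bounded weighted operators requires a careful matching of cokernels across a change of function space, and it must be done with enough uniformity that the same weight $a$ is admissible for both $\mathbf A$ and $\mathbf A+A'$. That this can be arranged hinges crucially on the fact that $A'$ is of order strictly less than $2m$, which is precisely why lower-order terms leave the symbol $\tilde{\mathcal L}(\lambda)$ and the domain-defining boundary operators $\mathbf B_{i\mu}$ unaffected.
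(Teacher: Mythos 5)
Your overall strategy (pass to weighted spaces, where lower-order terms become compact) is in the right spirit, and your compactness step for $A'(y,D_y):H_a^{2m}(G)\to H_a^0(G)$ is sound. The genuine gap is your second step, the identity $\ind\mathbf P=\ind\mathcal L_a^0$ for a bounded operator on $H_a^{2m}(G)$. The domain of $\mathbf P$ — and in particular $\ker\mathbf P$ — is \emph{not} contained in $H_a^{2m}(G)$ for any weight compatible with $H_a^{2m}(G)\subset W^m(G)$: by Lemma~\ref{lAsymp} (via Lemma~\ref{lu=P+v}), an element of $\Dom(\mathbf Q)$ is a sum of polynomial terms of degree $\le m-2$ at the points of $\mathcal K$ and a function in $H_{b+1}^{2m}(G)$, and already a nonzero constant near $g_j$ belongs to $H_a^{2m}$ only if $a>2m-1$, which is incompatible with $a\le m$. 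Consequently your proposed upgrade of $u\in\ker\mathbf P$ to $H_a^{2m}(G)$ via Lemma~\ref{lAppL2.3GurMatZam05} is not legitimate: that lemma requires the a priori membership $U_j\in H_{a-2m}^0(K_j^{D_\chi\varepsilon})$, which a $W^m$ kernel element need not satisfy (and, incidentally, for $u\in\ker\mathbf P$ the model boundary data $f_{j\sigma\mu}=-\mathbf B_{i\mu}^2u$ are not zero). The cokernel matching is likewise only asserted; it is precisely where the index of the weighted bounded operator can differ from $\ind\mathbf P$ (in Kondrat'ev theory that index jumps as the line $\Im\lambda=a+1-2m$ crosses eigenvalues of $\tilde{\mathcal L}(\lambda)$, so equality with the fixed number $\ind\mathbf P$ cannot hold for all admissible $a$ and needs a real argument for the chosen $a$). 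A further, smaller, defect: for $u\in H_a^{2m}(G)$ one only has $\mathbf B_{i\mu}u\in H_a^{2m-m_{i\mu}-1/2}(\Gamma_i)\dotplus R_a^{i\mu}(\Gamma_i)$ with a finite-dimensional correction space, so your $\mathcal L_a^0$ with the stated target is not even well defined (cf.~\eqref{eqOpL}).

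The paper circumvents exactly this difficulty by never comparing $\mathbf P$ with a bounded weighted operator. Instead it introduces the \emph{unbounded} operator $\mathbf Q:\Dom(\mathbf Q)\subset L_2(G)\to H_a^0(G)$ with the same domain-defining conditions ($u\in W^m(G,\mathbf B)$, $\mathbf Au\in H_a^0(G)$), so that $\ker\mathbf Q=\ker\mathbf P$ trivially, proves Fredholmness of $\mathbf Q$ using the bounded weighted operator only as an auxiliary restriction $\dot{\mathbf Q}$, and identifies $\codim\mathcal R(\mathbf Q)=\codim\mathcal R(\mathbf P)$ through the density of $L_2(G)$ in $H_a^0(G)$ and Riesz' theorem (Lemma~\ref{lIndQ=IndP}); the $\mathbf Q$-compactness of $A'$ then comes from the asymptotic decomposition of Lemma~\ref{lAsymp} and Corollary~\ref{corA'u}. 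If you want to salvage your route, you would have to prove an index relation between $\mathbf P$ and the weighted bounded operator (with the $\mathcal R_a^0(G,\Gamma)$ correction) for the specific weight $m-1<a<m$, which is essentially the content of the paper's Lemmas~\ref{lIndQ=IndP} and~\ref{lAsymp} and cannot be dismissed as bookkeeping.
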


This theorem shows that the lower-order terms in~\eqref{eqPinG} do
not affect the index of the unbounded operator $\mathbf P$. The
difficulty is that the above perturbations are, in general,
neither compact nor $\mathbf P$-compact in the sense of
Definition~\ref{defPCompact}. If $m=1$ then $u\in\Dom(\mathbf P)$
implies only $u\in W^1(G)$, which ensures the $\mathbf
P$-boundedness of the perturbation but not its $\mathbf
P$-compactness. However, if $m\ge 2$, then $u\in\Dom(\mathbf P)$
does not imply $u\in W^{2m-1}(G)$, and the perturbation is not
even $\mathbf P$-bounded. Moreover, $\Dom(\mathbf P')\ne
\Dom(\mathbf P)$ in the latter case.

To overcome this difficulty, we introduce the operator $\mathbf Q:
\Dom(\mathbf Q)\subset L_2(G)\to H_a^0(G)$ given by
\begin{equation}\label{eqOpQ}
 \mathbf Q u=\mathbf A(y, D_y)u,\qquad u\in \Dom(\mathbf Q)=\{u\in
 W^m(G,\mathbf B):\ \mathbf A(y, D_y) u\in H_a^0(G)\}.
\end{equation}
In this definition and further (unless otherwise stated), we
assume that
$$
 m-1<a<m.
$$
We will prove that $\ind \mathbf Q=\ind\mathbf P$. On the other
hand, we will show that the operator $A'(y,D_y)$ is a $\mathbf
Q$-compact perturbation, and, therefore, it does not change the
index of $\mathbf Q$ and hence $\mathbf P$.

\begin{lemma}\label{lIndQ=IndP}
Let the line $\Im\lambda=a+1-2m$ contain no eigenvalues of the
operator $\tilde{\mathcal L}(\lambda)$. Then the operator $\mathbf
Q$ has the Fredholm property and $\ind \mathbf Q=\ind\mathbf P$.
\end{lemma}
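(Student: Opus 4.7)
The plan is to handle this in two parts: first establish that $\mathbf Q$ is Fredholm by mimicking the proof of Theorem~\ref{thTheor2.1GurMatZam04} in the weighted setting, and then compare $\ind\mathbf P$ with $\ind\mathbf Q$ by identifying the kernels and cokernels directly. For the Fredholm property of $\mathbf Q$, I would patch together local a priori estimates: away from $\mathcal K$, standard interior elliptic theory together with Lemma~\ref{lSmoothOutsideK} bounds $\|u\|_{W^{2m}(G\setminus\mathcal O_\delta(\mathcal K))}$ by $\|\mathbf Q u\|_{H_a^0(G)}$ modulo a compact seminorm; near $\mathcal K$, one passes to the model problems \eqref{eqPinK}, \eqref{eqBinK} and invokes Lemma~\ref{lAppL2.3GurMatZam05} to obtain the local $H_a^{2m}$ estimate. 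The hypothesis that the line $\Im\lambda=a+1-2m$ carries no eigenvalues of $\tilde{\mathcal L}(\lambda)$ is precisely what allows Lemma~\ref{lAppL2.3GurMatZam05} to apply. Combining the two estimates via a partition of unity produces an inequality of the form $\|u\|_{\mathrm{graph}}\le c(\|\mathbf Q u\|_{H_a^0}+\|u\|_{L_2})$ with a compact inclusion on the right, which yields the Fredholm property of $\mathbf Q$.

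For the index, I first observe that $\ker\mathbf P=\ker\mathbf Q$, since both equal the set of $u\in W^m(G,\mathbf B)$ with $\mathbf A(y,D_y)u=0$, and $0$ belongs to both $L_2(G)$ and $H_a^0(G)$. Next I claim $\mathrm{Im}\,\mathbf P = L_2(G)\cap\mathrm{Im}\,\mathbf Q$. The inclusion $\subset$ is immediate from $\Dom\mathbf P\subset\Dom\mathbf Q$. For the reverse inclusion, let $f\in L_2(G)$ satisfy $f=\mathbf A u$ for some $u\in\Dom\mathbf Q$; then $u\in W^m(G,\mathbf B)$ and $\mathbf A u=f\in L_2(G)$, so by definition $u\in\Dom\mathbf P$, and therefore $f\in\mathrm{Im}\,\mathbf P$.

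The inclusion $\iota:L_2(G)\hookrightarrow H_a^0(G)$ is continuous because $a>m-1\ge 0$ and $\rho$ is bounded on $\overline G$. By the identification just proved, $\iota$ induces an injection $\bar\iota:L_2(G)/\mathrm{Im}\,\mathbf P\hookrightarrow H_a^0(G)/\mathrm{Im}\,\mathbf Q$. Moreover $\iota(L_2(G))$ is dense in $H_a^0(G)$, because $C_0^\infty(\overline G\setminus\mathcal K)$ is contained in $L_2(G)$ and is dense in $H_a^0(G)$ by the very definition of that space as a completion. Hence the image of $\bar\iota$ is a dense linear subspace of the finite-dimensional quotient $H_a^0(G)/\mathrm{Im}\,\mathbf Q$, and therefore coincides with the whole quotient (every linear subspace of a finite-dimensional Hausdorff space is closed). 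Thus $\bar\iota$ is an isomorphism, $\dim\,\mathrm{coker}\,\mathbf P=\dim\,\mathrm{coker}\,\mathbf Q$, and together with the equality of kernels this gives $\ind\mathbf P=\ind\mathbf Q$. The substantive work is the Fredholm argument in the first paragraph: the domain of $\mathbf Q$ is the a priori larger set $\{u\in W^m(G,\mathbf B):\mathbf A u\in H_a^0(G)\}$ rather than $H_a^{2m}(G,\mathbf B)$, so one has to bridge to the weighted setting via Lemma~\ref{lAppL2.3GurMatZam05}; once Fredholmness is in hand the index identity is purely functional-analytic.
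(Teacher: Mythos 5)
Your index-comparison argument (second and third paragraphs) is sound and is essentially the paper's: $\ker\mathbf P=\ker\mathbf Q$, $\mathcal R(\mathbf P)=L_2(G)\cap\mathcal R(\mathbf Q)$, and the density of $L_2(G)$ in $H_a^0(G)$ forces the two codimensions to agree; your quotient-map formulation is just a repackaging of the paper's argument with the functionals $F_j$ and Riesz' theorem. The genuine gap is in your first paragraph, where the Fredholm property of $\mathbf Q$ is claimed. Lemma~\ref{lAppL2.3GurMatZam05} cannot be invoked for an arbitrary $u\in\Dom(\mathbf Q)$: its hypothesis requires $U_j\in H^0_{a-2m}(K_j^{D_\chi\varepsilon})$, and since $m-1<a<m$ a function that is merely in $W^m$ near a point of $\mathcal K$ (for instance one with a nonzero value there) fails this condition; in fact $\Dom(\mathbf Q)\not\subset H_a^{2m}(G)$, so the local $H_a^{2m}$ estimate near $\mathcal K$ that you want to patch in simply cannot hold. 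This is precisely why the paper, when it later needs weighted information about $\Dom(\mathbf Q)$ (Lemma~\ref{lAsymp}), first subtracts Taylor polynomials at the points of $\mathcal K$ via Lemma~\ref{lu=P+v} and only obtains $v\in H^{2m}_{b+1}(G)$ with $b+1>m$, not membership of $u$ itself in $H_a^{2m}(G)$.

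Moreover, even if an estimate $\|u\|\le c\left(\|\mathbf Q u\|_{H_a^0(G)}+\|u\|_{L_2(G)}\right)$ in a norm compactly embedded into $L_2(G)$ were available, Theorem~\ref{thAppBLem7.1Kr} would only give $\dim\ker\mathbf Q<\infty$ and closedness of $\mathcal R(\mathbf Q)$ (and only after closedness of $\mathbf Q$ itself is established, which you do not address); it says nothing about $\codim\mathcal R(\mathbf Q)<\infty$, so it does not by itself ``yield the Fredholm property.'' The paper supplies the missing solvability input from Theorem~6.1 of~\cite{GurRJMP04}: the bounded operator $\mathbf L$ of~\eqref{eqOpL} on $H_a^{2m}(G)$ is Fredholm (this is where the hypothesis on the line $\Im\lambda=a+1-2m$ actually enters, not through Lemma~\ref{lAppL2.3GurMatZam05}, which needs no spectral condition); from this one deduces the Fredholm property of the restricted operator $\dot{\mathbf Q}$ with domain $\{u\in H_a^{2m}(G):\ \mathbf B_{i\mu}u=0\}$, hence $\mathcal R(\mathbf Q)\supset\mathcal R(\dot{\mathbf Q})$ is closed and of finite codimension, while $\ker\mathbf Q=\ker\mathbf P$ is finite-dimensional by Theorem~\ref{thTheor2.1GurMatZam04}; closedness of the extension $\mathbf Q$ is then obtained from Theorem~\ref{thAppBdotPP}. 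An argument of this kind is needed to replace your first paragraph.
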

\begin{proof}
1. It is shown in~\cite[Sec.~6]{GurRJMP04} that $ \mathbf
B_{i\mu}u\in H_a^{2m-m_{i\mu}-1/2}(\Gamma_i)\dotplus
R_a^{i\mu}(\Gamma_i)$ for $u\in H_a^{2m}(G), $ where
$R_a^{i\mu}(\Gamma_i)$ is a finite-dimensional subspace in
$H_{a'}^{2m-m_{i\mu}-1/2}(\Gamma_i)$ for any $a'>2m-1$. Set
$$
\mathcal
H_a^0(G,\Gamma)=H_a^0(G)\times\prod\limits_{i=1}^N\prod\limits_{\mu=1}^m
H_a^{2m-m_{i\mu}-1/2}(\Gamma_i),\qquad \mathcal
R_a^0(G,\Gamma)=\{0\}\times\prod\limits_{i=1}^N\prod\limits_{\mu=1}^m
R_a^{i\mu}(\Gamma_i).
$$
By Theorem~6.1 in~\cite{GurRJMP04}, the bounded operator
\begin{equation}\label{eqOpL}
\mathbf L=\{{\bf A}(y,D_y),{\bf B}_{i\mu}\}: H_a^{2m}(G)\to
\mathcal H_a^0(G,\Gamma)\dotplus \mathcal R_a^0(G,\Gamma)
\end{equation}
has the Fredholm property. Therefore, by virtue of the compactness
of the embedding $H_a^{2m}(G)\subset L_2(G)$ (see
Lemma~\ref{lAppALem3.5Kondr}) and by Theorem~\ref{thAppBLem7.1Kr},
we have
\begin{equation}\label{eqIndQ=IndP_1}
\|u\|_{H_a^{2m}(G)}\le k_1\left(\|\mathbf L u\|_{\mathcal
H_a^0(G,\Gamma)\dotplus \mathcal
R_a^0(G,\Gamma)}+\|u\|_{L_2(G)}\right).
\end{equation}

2. Introduce the unbounded operator $\dot{\mathbf Q}:
\Dom(\dot{\mathbf Q})\subset L_2(G)\to H_a^0(G)$ given by
\begin{equation}\label{eqOpDotQ}
 \dot{\mathbf Q} u=\mathbf A(y, D_y)u,\qquad u\in \Dom(\dot{\mathbf Q})=\{u\in
 H_a^{2m}(G):\ \mathbf B_{i\mu}u=0\}.
\end{equation}
Since $H_a^{2m}(G)\subset W^m(G)$, it follows that $\dot{\mathbf
Q}$ is a restriction of ${\bf Q}$, i.e., $\dot{\mathbf
Q}\subset{\bf Q}.$

First, we prove that $\dot{\mathbf Q}$ has the Fredholm property.
Let $u\in\Dom(\dot{\mathbf Q})$; then $u\in\Dom(\mathbf
L)=H_a^{2m}(G)$ and $\mathbf A(y,D_y)u\in H_a^0(G)$, $\mathbf
B_{i\mu}u=0$. Therefore, estimate~\eqref{eqIndQ=IndP_1} acquires
the form
\begin{equation}\label{eqIndQ=IndP_3}
\|u\|_{H_a^{2m}(G)}\le k_1\left(\|\dot{\mathbf
Q}u\|_{H_a^0(G)}+\|u\|_{L_2(G)}\right)\quad \forall
u\in\Dom(\dot{\mathbf Q}).
\end{equation}

It follows from~\eqref{eqIndQ=IndP_3} that the operator
$\dot{\mathbf Q}$ is closed, $\dim\ker\dot{\mathbf Q}<\infty$, and
$\mathcal R(\dot{\mathbf Q})=\overline{\mathcal R(\dot{\mathbf
Q})}$ (to obtain the latter two properties, one must apply
Theorem~\ref{thAppBLem7.1Kr}).

Let us prove that $\codim\mathcal R(\dot{\mathbf Q})<\infty$.
Since $\mathbf L$ has the Fredholm property, there exist finitely
many linearly independent functions $F_1,\dots,F_d\in H_a^0(G)$
such that a function $f\in H_a^0(G)$ belongs to the image of
$\dot{\mathbf Q}$ iff $(f,F_j)_{H_a^0(G)}=0$, $j=1,\dots, d$.
Thus, $\dot{\mathbf Q}$ has the Fredholm property.

3. Now we prove that $\mathbf Q$ has the Fredholm property. Since
$\ker {\bf Q}=\ker {\bf P}$ and ${\bf P}$ has the Fredholm
property, it follows that
\begin{equation}\label{eqIndQ=IndP_5}
\dim\ker {\bf Q}=\dim\ker {\bf P}<\infty.
\end{equation}
On the other hand, ${\bf Q}$ is an extension of the Fredholm
operator $\dot{\bf Q}$; therefore,
\begin{equation}\label{eqIndQ=IndP_6}
\mathcal R(\mathbf Q)=\overline{\mathcal R(\mathbf Q)},\qquad
\codim\mathcal R(\mathbf Q)<\infty.
\end{equation}

Thus, ${\bf Q}$ is an extension of the Fredholm operator
$\dot{\mathbf Q}$, and possesses properties~\eqref{eqIndQ=IndP_5}
and~\eqref{eqIndQ=IndP_6}. Applying Theorem~\ref{thAppBdotPP}, we
see that ${\bf Q}$ has the Fredholm property.

4. By virtue of \eqref{eqIndQ=IndP_5}, it remains to prove that
$\codim\mathcal R(\mathbf Q)=\codim\mathcal R(\mathbf P)$.

Let $\codim\mathcal R({\bf Q})=d_1$, where $d_1\le d$. Take an
arbitrary function $f\in L_2(G)$. Then $f\in\mathcal R({\bf P})$
iff $f\in\mathcal R({\bf Q})$ because $L_2(G)\subset H_a^0(G)$.
However, the belonging $f\in\mathcal R({\bf Q})$ is equivalent to
the relations $(f,F_j)_{H_a^0(G)}=0$, $j=1,\dots, d_1, $  where
$F_1,\dots,F_{d_1}\in H_a^0(G)$ are linearly independent
functions. Using Schwarz' inequality, the boundedness of the
embedding $L_2(G)\subset H_a^0(G)$, and Riesz' theorem, we see
that these relations are equivalent to the following ones: $
(f,f_j)_{L_2(G)}=0,$ $j=1,\dots, d_1, $ where $f_j\in L_2(G)$.
Moreover, the functions $f_1,\dots,f_{d_1}$ are linearly
independent. (Otherwise, some linear combination of the functions
$F_1,\dots,F_{d_1}$ would be orthogonal in $H_a^{0}(G)$ to any
function from $L_2(G)$. This is impossible because
$F_1,\dots,F_{d_1}$ are linearly independent, while $L_2(G)$ is
dense in $H_a^{0}(G)$.) Thus, we have proved that $\codim\mathcal
R(\mathbf P)=d_1$.
\end{proof}

Introduce the perturbed operator $\mathbf Q': \Dom(\mathbf
Q')\subset L_2(G)\to H_a^0(G)$ given by
$$
 \mathbf Q' u=\mathbf A(y, D_y)u+A'(y,D_y)u,\quad u\in \Dom(\mathbf Q')=\{u\in
 W^m(G,\mathbf B):\ \mathbf A(y, D_y)u+A'(y,D_y)u\in H_a^0(G)\}.
$$
In the following section, we prove that $\ind \mathbf Q'=\ind
\mathbf Q$, provided that the line $\Im\lambda=a+1-2m$ contains no
eigenvalues of the operator $\tilde{\mathcal L}(\lambda)$. Then,
using the discreteness of the spectrum of $\tilde{\mathcal
L}(\lambda)$ and Lemma~\ref{lIndQ=IndP}, we will prove
Theorem~\ref{thIndP=IndP'}.

\subsection{Compactness of lower-order terms in weighted
spaces}\label{subsecCompLowerTermWeighted}

\begin{lemma}\label{lAprEstWm}
Let the line $\Im\lambda=a+1-2m$ contain no eigenvalues of the
operator $\tilde{\mathcal L}(\lambda)$. Then
$$
\|u\|_{W^m(G)}\le c\left(\|{\mathbf
Q}u\|_{H_a^0(G)}+\|u\|_{L_2(G)}\right)\quad \forall u\in\Dom({\bf
Q}).
$$
\end{lemma}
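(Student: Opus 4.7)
\emph{Plan.} The idea is to combine the interior smoothness result Lemma~\ref{lSmoothOutsideK} away from the conjugation set with the weighted regularity estimate Lemma~\ref{lAppL2.3GurMatZam05} near $\mathcal K$. Fix a cut-off $\psi\in C_0^\infty(\mathcal O_{\varepsilon_0}(\mathcal K))$ with $\psi\equiv 1$ on $\mathcal O_{\varepsilon_0/2}(\mathcal K)$ and write $u=\psi u+(1-\psi)u$. On the support of $(1-\psi)u$ the weight $\rho^{2a}$ is bounded away from~$0$, so $\|\mathbf A u\|_{L_2(G\setminus\overline{\mathcal O_{\delta_1}(\mathcal K)})}\le c_\delta\|\mathbf Q u\|_{H_a^0(G)}$; Lemma~\ref{lSmoothOutsideK} then delivers a $W^{2m}$-estimate of $(1-\psi)u$ by $\|\mathbf Q u\|_{H_a^0(G)}+\|u\|_{L_2(G)}$, which in particular controls $\|(1-\psi)u\|_{W^m(G)}$.

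Next, I pass to the local coordinates $y\mapsto y'(g_j)$ near each $g_j\in\mathcal K$. The function $U_j$ obtained from $\psi u$ is supported in a fixed neighborhood of the origin and satisfies the model problem~\eqref{eqPinK}-\eqref{eqBinK}. The interior right-hand side $f_j$ is $\mathbf A u$ plus commutator terms with $\psi$ supported in $\supp\nabla\psi\subset\overline G\setminus\mathcal O_{\varepsilon_0/2}(\mathcal K)$, so its $H_a^0(K_j^\varepsilon)$-norm is bounded by $\|\mathbf Q u\|_{H_a^0(G)}$ plus lower-order terms already controlled by the previous step. The boundary data $f_{j\sigma\mu}=-\mathbf B_{i\mu}^2u$ are controlled via Condition~\ref{condSeparK23} by the $W^{2m}$-norm of $u$ away from $\mathcal K$, also bounded in Step~1. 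If Lemma~\ref{lAppL2.3GurMatZam05} yields $U_j\in H_a^{2m}(K_j^{\varepsilon/D_\chi^3})$ with norm controlled by $\|\mathbf Q u\|_{H_a^0(G)}+\|u\|_{L_2(G)}$, the proof is complete: since $m-1<a<m$, the exponent $a+|\alpha|-2m$ is negative for $|\alpha|\le m$, hence $\rho^{2(a+|\alpha|-2m)}$ is bounded below by a positive constant on the bounded region, which gives the local embedding $H_a^{2m}\hookrightarrow W^m$.

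The main obstacle is verifying the hypothesis $U_j\in H_{a-2m}^0(K_j^{D_\chi\varepsilon})$ of Lemma~\ref{lAppL2.3GurMatZam05}: since $a-2m<-m$, the weight $\rho^{2(a-2m)}$ blows up at the origin, and mere $W^m$-regularity of $u$ does not guarantee such decay. I plan to bypass this by the Mellin-transform / contour-shift argument underlying the proof of Lemma~\ref{lAppL2.3GurMatZam05} itself. Since $a>0$ (because $m\ge 1$), the Sobolev function $U_j$ lies in $H_b^0$ for a sufficiently large weight $b$, so Lemma~\ref{lAppL2.3GurMatZam05} applies with $b$ in place of $a-2m$ and yields $U_j$ in a weighted space with large weight. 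Using the assumption that the line $\Im\lambda=a+1-2m$ is free of eigenvalues of $\tilde{\mathcal L}(\lambda)$ together with the discreteness of the spectrum, the Mellin contour can be shifted from the large-weight line down to $\Im\lambda=a+1-2m$; only finitely many eigenvalues are crossed, producing a finite-dimensional "singular" component of $U_j$ consisting of homogeneous solutions of the model problem. Since $u\in W^m(G)$ and the singular solutions that lie in $W^m$ form a finite-dimensional space, the $W^m$-norm of this component is equivalent to a norm bounded by $\|u\|_{L_2(G)}$. Adding the $H_a^{2m}$-estimate of the regular part and the finite-dimensional bound of the singular part gives the claimed inequality.
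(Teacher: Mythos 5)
Your plan away from $\mathcal K$ is fine, but the heart of the lemma is exactly the step you defer to a ``Mellin contour shift,'' and that step is a genuine gap. Lemma~\ref{lAppL2.3GurMatZam05} is only an a priori estimate in a fixed weighted class; the paper quotes no asymptotic-expansion theorem for the nonlocal model problem~\eqref{eqPinK}, \eqref{eqBinK}, and for nonlocal terms (which enter the pencil through the factors $\chi_{j\sigma ks}^{i\lambda}$) such a theorem with coefficient estimates is a substantial result, not a remark one can borrow ``from the proof of'' the a priori estimate. Worse, even granting such a theorem, your description of what it would give is incorrect: the terms picked up when lowering the weight are not only power solutions of the homogeneous model problem — the right-hand sides contribute as well, and here they obstruct the shift. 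The boundary data $f_{j\sigma\mu}=-\mathbf B^2_{i\mu}u$ are merely in $W^{2m-m_{i\mu}-1/2}(\Gamma_i)$, hence (part~1 of Lemma~\ref{lPsi=P1+Phi}) in weighted trace spaces of weight about $2m-m_{i\mu}-1\ge m>a$; they are \emph{not} in the weight-$a$ spaces, so the contour cannot be pushed down to $\Im\lambda=a+1-2m$ on the strength of the data you control. This is precisely why the paper's Lemma~\ref{lAsymp} (which, note, \emph{uses} Lemma~\ref{lAprEstWm}) only reaches $v\in H_{b+1}^{2m}(G)$ with $b+1\in(m,m+1)$ after subtracting polynomials of degree $\le m-2$: a generic $u\in\Dom(\mathbf Q)$ has a nonzero constant term at $g_j$ and does not lie in $H_a^{2m}$ near $\mathcal K$ for $a<m$. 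Finally, your closing estimate of the finite-dimensional ``singular'' part by $\|u\|_{L_2(G)}$ is unsupported: its coefficients are linear functionals of $u$ that must be bounded through the data norms (this is part of the expansion theorem you would first have to prove); finite dimensionality alone gives no such bound.

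For comparison, the paper's proof is purely functional-analytic and three lines long: regard the same operator as $\hat{\mathbf Q}:\Dom(\mathbf Q)\subset W^m(G)\to H_a^0(G)$; since $\mathbf Q$ is Fredholm under the stated spectral condition (Lemma~\ref{lIndQ=IndP}) and kernel, range and closedness are unchanged when the domain space is strengthened from $L_2(G)$ to $W^m(G)$, the operator $\hat{\mathbf Q}$ is Fredholm, and Theorem~\ref{thAppBLem7.1Kr} combined with the compact embedding $W^m(G)\subset L_2(G)$ yields exactly the desired inequality — no localization, model problems, or asymptotics are needed. If you want to salvage a hands-on proof along your lines, you would have to either prove the nonlocal asymptotics theorem with coefficient estimates, or argue as in Lemma~\ref{lAsymp} with the weaker weight $b+1$ and the polynomial decomposition of Lemma~\ref{lu=P+v}, and then check separately that $H_{b+1}^{2m}$ plus polynomials controls the $W^m$-norm; as written, the route through $H_a^{2m}$ does not go through.
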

\begin{proof}
Consider the unbounded operator $\hat{\mathbf Q}:
\Dom(\hat{\mathbf Q})\subset W^m(G)\to H_a^0(G)$ given by $
\hat{\mathbf Q} u=\mathbf A(y, D_y)u,$ $u\in \Dom(\hat{\mathbf
Q})=\Dom(\mathbf Q). $ Since ${\mathbf Q}$ has the Fredholm
property, the same is true for $\hat{\mathbf Q}$. Therefore, the
desired estimate follows from the compactness of the embedding
$W^m(G)\subset L_2(G)$ and from Theorem~\ref{thAppBLem7.1Kr}.
\end{proof}

Take a number $b$ such that
\begin{equation}\label{eqb}
m-1<b<a<m.
\end{equation}

Consider a function $\psi_j\in C_0^\infty(\mathbb R^2)$ equal to
$1$ in a small neighborhood of the point $g_j\in\mathcal K$ and
vanishing outside a larger neighborhood of $g_j$. The following
lemma describes the behavior of $u\in\Dom({\bf Q})$ near the set
$\mathcal K$.
\begin{lemma}\label{lAsymp}
For any $u\in\Dom({\bf Q})$, we have
\begin{equation}\label{eqAsymp_1}
u(y)=\sum\limits_{j=1}^N P_j(y)+v(y),
\end{equation}
where
\begin{equation}\label{eqAsymp_1'}
P_j(y)=\psi_j(y)\sum\limits_{|\alpha|\le m-2}p_{j\alpha}
(y-g_j)^\alpha,\qquad p_{j\alpha}\in\mathbb C,
\end{equation}
 and $v\in H_{b+1}^{2m}(G)$ {\rm (}if $m=1$, we set $P_j(y)\equiv0${\rm );} moreover,
\begin{equation}\label{eqAsymp_2}
\sum\limits_{j,\alpha}|p_{j\alpha}|+\|v\|_{H_{b+1}^{2m}(G)}\le
c\left(\|{\mathbf Q}u\|_{H_a^0(G)}+\|u\|_{L_2(G)}\right).
\end{equation}
\end{lemma}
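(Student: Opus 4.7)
The strategy is to read off the polynomial part $P_j$ from a Taylor expansion of $u$ at each $g_j$, apply the model-corner regularity statement of Lemma~\ref{lAppL2.3GurMatZam05} to the remainder $v=u-\sum_j P_j$, and glue these corner estimates with Lemma~\ref{lSmoothOutsideK} away from $\mathcal K$. Since the ambient dimension is two and $u\in\Dom(\mathbf Q)\subset W^m(G)$, for $m\ge 2$ the Sobolev embedding $W^m(G)\hookrightarrow C^{m-2}(\overline{G})$ produces well-defined Taylor coefficients $p_{j\alpha}=\frac{1}{\alpha!}D^\alpha u(g_j)$ for $|\alpha|\le m-2$, bounded by $c\|u\|_{W^m(G)}$; the case $m=1$ is trivial with $P_j\equiv 0$. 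In both cases Lemma~\ref{lAprEstWm} controls $\|u\|_{W^m(G)}$ by $\|\mathbf Q u\|_{H_a^0(G)}+\|u\|_{L_2(G)}$, which gives the polynomial half of~\eqref{eqAsymp_2}. Define $P_j$ by~\eqref{eqAsymp_1'} and set $v=u-\sum_j P_j\in W^m(G)$.

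Away from $\mathcal K$ the $H_a^0$-norm is equivalent to the $L_2$-norm, so Lemma~\ref{lSmoothOutsideK} with $k=0$ yields $v\in W^{2m}(G\setminus\overline{\mathcal O_\delta(\mathcal K)})$ for every $\delta>0$, whence $v\in H_{b+1}^{2m}$ on those sets. The remaining work is local at each $g_j$: in the model coordinates, $V_j:=v\circ Y_j^{-1}$ satisfies the model system~\eqref{eqPinK}--\eqref{eqBinK}, with right-hand sides built from $f$, from $\mathbf B_{i\mu}^2 u$, and from the correction terms $\mathbf A P_j$ and $\mathbf B_{i\mu}P_j$ produced by the subtraction. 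Because each $P_j$ is a smooth cutoff times a polynomial of degree $\le m-2$ and the shifts $\Omega_{is}$ are smooth diffeomorphisms (Condition~\ref{condK1}), direct computation places these model right-hand sides in $H_{b+1}^0(K_j^\varepsilon)$ and $H_{b+1}^{2m-m_{j\sigma\mu}-1/2}(\gamma_{j\sigma}^\varepsilon)$ respectively, with norms controlled linearly by $\|\mathbf Q u\|_{H_a^0(G)}+\|u\|_{L_2(G)}$.

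The main obstacle is verifying the last hypothesis of Lemma~\ref{lAppL2.3GurMatZam05}, namely $V_j\in H_{b+1-2m}^0(K_j^{D_\chi\varepsilon})$, since pointwise vanishing at an isolated point does not by itself improve weighted $L_2$ integrability of a generic $W^m$ function. For $m\ge 2$ the resolution is to invoke the two-dimensional Morrey embedding $W^m(K_j)\hookrightarrow C^{m-2,\beta}$ for every $\beta<1$; combined with $D^\alpha V_j(0)=0$ for $|\alpha|\le m-2$ it yields the pointwise decay $|V_j(y)|\le c\,r^{m-2+\beta}$, so any choice $\beta\in(m-b,1)$ (nonempty because $b>m-1$) makes the radial integral $\int r^{2(b-m+\beta)-1}\,dr$ converge near zero and gives $V_j\in H_{b+1-2m}^0$ with the required linear bound. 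For $m=1$ the analogous statement $V_j\in H_{b-1}^0$ follows directly from $W^1\hookrightarrow L_p$ for every $p<\infty$ in two dimensions combined with H\"older's inequality against $r^{2(b-1)}$, whose $L^q$-integrability near zero holds for some $q>1$ since $b>0$. Once this hypothesis is in place, Lemma~\ref{lAppL2.3GurMatZam05} at weight index $b+1$ delivers $V_j\in H_{b+1}^{2m}(K_j^{\varepsilon/D_\chi^3})$, and a standard partition-of-unity argument glues the corner estimates to the exterior bound from Lemma~\ref{lSmoothOutsideK}, producing $v\in H_{b+1}^{2m}(G)$ together with~\eqref{eqAsymp_2}.
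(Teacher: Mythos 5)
There is a genuine gap, and it sits exactly at the step you dismiss as ``direct computation'': the membership of the \emph{boundary} right-hand sides in $H_{b+1}^{2m-m_{i\mu}-1/2}(\Gamma_i)$. After subtracting $P=\sum_j P_j$, the boundary datum for $v$ is $f'_{i\mu}=-\mathbf B_{i\mu}P-\mathbf B^2_{i\mu}v$, and neither piece lies in the weighted trace space individually. Indeed, a function $\psi\in W^{2m-m_{i\mu}-1/2}(\Gamma_i)$ with $\psi(g_j)\neq 0$ belongs to $H_{b+1}^{2m-m_{i\mu}-1/2}$ near $g_j$ only if $b>2m-m_{i\mu}-2$, which, since $b<m$, forces $m_{i\mu}=m-1$; for any condition with $m_{i\mu}\le m-2$ (e.g.\ the trace operator when $m\ge2$) it fails. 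But $\mathbf B_{i\mu}P_j$ generically has a nonzero value at $g_j$ (lower-order coefficients of $B_{i\mu 0}$ acting on the constant term $p_{j0}$), and $\mathbf B^2_{i\mu}v$ is just some $W^{2m-m_{i\mu}-1/2}$ function near $\mathcal K$ with no vanishing there. Only the \emph{sum} lies in the weighted space, and the reason is not visible from $P_j$ and the smoothness of the $\Omega_{is}$: one has to use the equation, $f'_{i\mu}=\mathbf B^0_{i\mu}v+\mathbf B^1_{i\mu}v$, together with weighted control of $v$ and its derivatives up to order $m$, namely $v\in H^m_{b-m+1}(G)$ (Kondrat'ev's decomposition, Lemma~\ref{lu=P+v}). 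This yields $f'_{i\mu}\in H^{m-m_{i\mu}-1/2}_{b-m+1}(\Gamma_i)$, which, via the splitting of Lemma~\ref{lPsi=P1+Phi}, kills the Taylor monomials of $f'_{i\mu}$ of degree $\le m-m_{i\mu}-2$ at the points of $\mathcal K$ and only then gives $f'_{i\mu}\in H_{b+1}^{2m-m_{i\mu}-1/2}(\Gamma_i)$ with the right bound (this is estimate~\eqref{eqAsymp_8}, the heart of the paper's proof).

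Your Morrey argument cannot substitute for this, because it controls only $v$ itself in a weighted $L_2$ sense ($V_j\in H^0_{b+1-2m}$), not the derivatives of $v$ up to order $m_{i\mu}\le m-1$ that enter $\mathbf B^0_{i\mu}v+\mathbf B^1_{i\mu}v$; pointwise decay of $v$ alone does not control the traces of its top-order derivatives at the conjugation points. That said, two parts of your plan are sound and essentially agree with the paper: the polynomial $P_j$ is indeed the Taylor polynomial of $u$ at $g_j$ (the paper gets it from Lemma~\ref{lu=P+v}, parts 1--3, rather than from Sobolev embedding), and your Morrey/H\"older derivation of the hypothesis $V_j\in H^0_{b+1-2m}(K_j^{D_\chi\varepsilon})$ of Lemma~\ref{lAppL2.3GurMatZam05} is a correct alternative to reading it off from $v\in H^m_{b-m+1}(G)$. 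To close the gap you should replace the Taylor-coefficient construction by the full Kondrat'ev decomposition (or re-derive $v\in H^m_{b-m+1}(G)$, i.e.\ weighted bounds on all derivatives of $v$ up to order $m$), and then run the cancellation argument for $f'_{i\mu}$ described above.
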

\begin{proof}
1. It follows from Lemma~\ref{lSmoothOutsideK} that $u\in
W^{2m}(G\setminus\overline{\mathcal O_\delta(\mathcal K)})$ for
any $\delta>0$ and
\begin{equation}\label{eqAsymp_3}
\|u\|_{W^{2m}(G\setminus\overline{\mathcal O_\delta(\mathcal
K)})}\le k_{1\delta}\left(\|{\mathbf
Q}u\|_{H_a^0(G)}+\|u\|_{L_2(G)}\right),
\end{equation}
where $k_{1\delta}$ does not depend on $u$. Therefore, it suffices
to consider the behavior of $u$ near the set $\mathcal K$.

By Lemma~\ref{lu=P+v}, $u\in W^m(G)$ can be represented in the
form~\eqref{eqAsymp_1}, where $P_j(y)$ is given
by~\eqref{eqAsymp_1'}, $v\in H_{b-m+1}^m(G)$, and
\begin{equation}\label{eqAsymp_4}
\sum\limits_{j,\alpha}|p_{j\alpha}|+\|v\|_{H_{b-m+1}^m(G)}\le
k_2\left(\|{\mathbf Q}u\|_{H_a^0(G)}+\|u\|_{L_2(G)}\right)
\end{equation}
(to obtain~\eqref{eqAsymp_4}, we have also applied
Lemma~\ref{lAprEstWm}).

Moreover, relations~\eqref{eqAsymp_1}, \eqref{eqAsymp_3},
and~\eqref{eqAsymp_4} imply that
\begin{equation}\label{eqAsymp_4'}
\|v\|_{W^{2m}(G\setminus\overline{\mathcal O_\delta(\mathcal
K)})}\le k_{2\delta}\left(\|{\mathbf
Q}u\|_{H_a^0(G)}+\|u\|_{L_2(G)}\right)\quad \forall \delta>0,
\end{equation}
where $k_{2\delta}$ does not depend on $u$. It remains to prove
that $v\in H_{b+1}^{2m}(G)$.

2. By using~(\ref{eqPinG}) and (\ref{eqBinG}), we see that $v$ is
a solution of the problem
\begin{equation}\label{eqAsymp_5}
 {\bf A}(y, D_y)v=f-{\bf A}(y, D_y)P\equiv f',\qquad
     \mathbf B_{i\mu}^0 v+ \mathbf B_{i\mu}^1 v=
     -\mathbf B_{i\mu}P-\mathbf B_{i\mu}^2 v\equiv f_{i\mu}',
\end{equation}
where $ P(y)=\sum_{j=1}^N P_j(y) $ and $f=\mathbf Q u\in
H_a^0(G)$. It follows from the boundedness of the embedding
$H_a^0(G)\subset H_{b+1}^0(G)$ (see~\eqref{eqb}) and from the
estimate of the coefficients $p_{j\alpha}$ (see~\eqref{eqAsymp_4})
that
\begin{equation}\label{eqAsymp_6}
\|f'\|_{H_{b+1}^0(G)}\le k_3\left(\|{\mathbf
Q}u\|_{H_a^0(G)}+\|u\|_{L_2(G)}\right).
\end{equation}
Similarly, using additionally inequalities~\eqref{eqSeparK23'}
and~\eqref{eqAsymp_4'}, we obtain $f_{i\mu}'=-\mathbf
B_{i\mu}P-\mathbf B_{i\mu}^2 v\in W^{2m-m_{i\mu}-1/2}(\Gamma_i)$
and
\begin{equation}\label{eqAsymp_7}
\|f_{i\mu}'\|_{W^{2m-m_{i\mu}-1/2}(\Gamma_i)}\le
k_4\left(\|{\mathbf Q}u\|_{H_a^0(G)}+\|u\|_{L_2(G)}\right).
\end{equation}

On the other hand, $v\in H_{b-m+1}^m(G)$; hence $f_{i\mu}'=\mathbf
B_{i\mu}^0 v+ \mathbf B_{i\mu}^1 v\in
H_{b-m+1}^{m-m_{i\mu}-1/2}(\Gamma_i)$. We claim that
\begin{equation}\label{eqAsymp_8}
\|f_{i\mu}'\|_{H_{b+1}^{2m-m_{i\mu}-1/2}(\Gamma_i)}\le
k_5\left(\|{\mathbf Q}u\|_{H_a^0(G)}+\|u\|_{L_2(G)}\right).
\end{equation}
To prove this assertion, we fix $i$ and $\mu$ and set
$\Gamma=\Gamma_i$. Let $g\in\overline{\Gamma}\setminus\Gamma$.
Assume, without loss of generality, that $g=0$ and $\Gamma$
coincides with the axis $Oy_1$ in a sufficiently small
neighborhood $\mathcal O_\varepsilon(0)$ of the origin. Denote
$$
G^\varepsilon=G\cap\mathcal O_\varepsilon(0),\qquad
\Gamma^\varepsilon=\Gamma\cap\mathcal O_\varepsilon(0),
$$
in which case $H_a^k(G^\varepsilon)=H_a^k(G^\varepsilon,\{0\})$.

Using part~1 of Lemma~\ref{lPsi=P1+Phi}, we represent
$f_{i\mu}'\in W^{2m-m_{i\mu}-1/2}(\Gamma^\varepsilon)$ near the
origin as follows:
$$
f_{i\mu}'(r)=P_1(r)+f_{i\mu}''(r),\qquad 0<r<\varepsilon,
$$
where $ P_1(r)$ is a polynomial of order $2m-m_{i\mu}-2$, whereas
$f_{i\mu}''\in H_{b+1}^{2m-m_{i\mu}-1/2}(\Gamma^\varepsilon)$ (in
fact, we can replace $b+1$ by any positive number in the last
relation). Now we have $f_{i\mu}',f_{i\mu}''\in
H_{b-m+1}^{m-m_{i\mu}-1/2}(\Gamma^\varepsilon)$; therefore,
$P_1\in H_{b-m+1}^{m-m_{i\mu}-1/2}(\Gamma^\varepsilon)$, i.e.,
$P_1$ consists of monomials of order greater than or equal to
$m-m_{i\mu}-1$. This implies that $P_1\in
H_{b+1}^{2m-m_{i\mu}-1/2}(\Gamma^\varepsilon)$. Using part~3 of
Lemma~\ref{lPsi=P1+Phi}, we obtain
$$
\|f_{i\mu}'\|_{H_{b+1}^{2m-m_{i\mu}-1/2}(\Gamma^\varepsilon)}\le
\|P_1\|_{H_{b+1}^{2m-m_{i\mu}-1/2}(\Gamma^\varepsilon)}+
\|f_{i\mu}''\|_{H_{b+1}^{2m-m_{i\mu}-1/2}(\Gamma^\varepsilon)}\le
k_6\|f_{i\mu}'\|_{W^{2m-m_{i\mu}-1/2}(\Gamma^\varepsilon)}.
$$
Combining this estimate with~\eqref{eqAsymp_7}
yields~\eqref{eqAsymp_8}.

3. Applying Lemma~\ref{lAppL2.3GurMatZam05} to
problem~\eqref{eqAsymp_5} and taking into
account~\eqref{eqAsymp_4'}, \eqref{eqAsymp_6}, \eqref{eqAsymp_8},
and~\eqref{eqAsymp_4}, we obtain
$$
\|v\|_{H_{b+1}^{2m}(G)}\le k_7\left(\|f'\|_{H_{b+1}^0(G)}+
\sum\limits_{i,\mu}\|f_{i\mu}'\|_{H_{b+1}^{2m-m_{i\mu}-1/2}(\Gamma_i)}+
\|v\|_{H_{b-2m+1}^{0}(G)}\right) \le k_8 \left(\|{\mathbf
Q}u\|_{H_a^0(G)}+\|u\|_{L_2(G)}\right).
$$
Combining this inequality with~\eqref{eqAsymp_4}
yields~\eqref{eqAsymp_2}.
\end{proof}

The following corollary results from Lemma~\ref{lAsymp}.
\begin{corollary}\label{corA'u}
Let $A'(y,D_y)$ be the differential operator of order $2m-1$,
given by~\eqref{eqOpA'}. Then
\begin{equation}\label{eqAprEstWm_0}
\|A'(y,D_y)u\|_{H_{b+1}^1(G)}\le c\left(\|{\bf
Q}u\|_{H_a^0(G)}+\|u\|_{L_2(G)}\right)\quad \forall u\in\Dom({\bf
Q}).
\end{equation}
\end{corollary}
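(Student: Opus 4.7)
The plan is to use the decomposition of $u\in\Dom(\mathbf Q)$ provided by Lemma~\ref{lAsymp}, apply $A'(y,D_y)$ to each piece separately, and estimate both contributions in the $H_{b+1}^1(G)$ norm. Write $u=\sum_{j=1}^N P_j+v$ as in~\eqref{eqAsymp_1}, \eqref{eqAsymp_1'}, so that $P_j$ is smooth, compactly supported in a fixed neighborhood of $g_j$, and polynomial of degree $\le m-2$ in $y-g_j$, while $v\in H_{b+1}^{2m}(G)$ with the a priori estimate~\eqref{eqAsymp_2}. Then $A'u=\sum_j A'P_j+A'v$, and I will bound each summand by $\|\mathbf Q u\|_{H_a^0(G)}+\|u\|_{L_2(G)}$.

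For the polynomial part, $A'P_j$ is smooth and compactly supported near $g_j$, and pointwise $|D^\beta(A'P_j)(y)|\le c\sum_\alpha|p_{j\alpha}|$ for $|\beta|\le 1$ with $c$ depending only on the coefficients $a_\alpha$ and $\psi_j$. Hence
\[
\|A'P_j\|_{H_{b+1}^1(G)}^2\le c\Bigl(\sum_\alpha|p_{j\alpha}|\Bigr)^2\sum_{|\beta|\le 1}\int_G\rho^{2(b+|\beta|)}\,dy,
\]
and the integral is finite because $b>m-1\ge 0$ and $G$ is bounded (in polar coordinates near $g_j$, $\int r^{2b}r\,dr\,d\omega$ converges since $2b+1>-1$). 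Thus $\|A'P_j\|_{H_{b+1}^1(G)}\le c\sum_\alpha|p_{j\alpha}|$, and summing in $j$ together with~\eqref{eqAsymp_2} yields the desired bound for the polynomial contribution.

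For the remainder, the point is that $A'$ has order $2m-1$, which is strictly less than the order $2m$ of the space in which $v$ lies. Concretely, for $|\beta|\le 1$ and $|\alpha|\le 2m-1$,
\[
\int_G\rho^{2(b+|\beta|)}|D^{\alpha+\beta}v|^2\,dy=\int_G\rho^{2(2m-|\alpha|-|\beta|)}\cdot\rho^{2(b+1+|\alpha+\beta|-2m)}|D^{\alpha+\beta}v|^2\,dy,
\]
where $2m-|\alpha|-|\beta|\ge 0$ since $|\alpha+\beta|\le 2m$, and $\rho$ is bounded on $G$. Expanding $A'v=\sum a_\alpha D^\alpha v$ and using the smoothness (hence boundedness with bounded derivatives) of the coefficients $a_\alpha$, one obtains $\|A'v\|_{H_{b+1}^1(G)}\le c\|v\|_{H_{b+1}^{2m}(G)}$. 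Invoking~\eqref{eqAsymp_2} finishes the proof.

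The only subtle point is verifying that the weight powers match when $A'$ acts on $v$; this is where the strict inequality $|\alpha|\le 2m-1<2m$ and the boundedness of $\rho$ on $G$ are crucial. The polynomial piece requires only the elementary integrability $b>-1$, which follows automatically from the running assumption $m-1<b$. If $m=1$, the sum over $j$ is empty by convention and only the $A'v$ estimate is needed.
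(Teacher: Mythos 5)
Your argument is correct and is exactly the intended one: the paper states Corollary~\ref{corA'u} as an immediate consequence of Lemma~\ref{lAsymp}, i.e., splitting $u=\sum_j P_j+v$, bounding $A'P_j$ via the coefficient estimate and the integrability of $\rho^{2b}$ (since $b>m-1\ge0$), and bounding $A'v$ by $\|v\|_{H_{b+1}^{2m}(G)}$ because $A'$ has order $2m-1<2m$. Only a bookkeeping slip in your displayed identity: the compensating factor should be $\rho^{2(2m-1-|\alpha|)}$ (so that the exponents sum to $b+|\beta|$, not $b+1$); since $2m-1-|\alpha|\ge0$ and $\rho$ is bounded on $G$, the estimate you draw from it is nevertheless valid.
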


Now we can prove that lower-order perturbations in~\eqref{eqPinG}
do not change the index of $\mathbf Q$.

\begin{lemma}\label{lIndQ=IndQ'}
Let the line $\Im\lambda=a+1-2m$ contain no eigenvalues of the
operator $\tilde{\mathcal L}(\lambda)$. Then the operators
$\mathbf Q$ and $\mathbf Q'$ have the Fredholm property and
$\ind\mathbf Q'=\ind\mathbf Q$.
\end{lemma}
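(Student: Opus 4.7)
The plan is to recognize $A'(y,D_y)$ as a $\mathbf Q$-compact perturbation in the sense of Definition~\ref{defPCompact} and then invoke the standard index-stability theorem for relatively compact perturbations of closed Fredholm operators. Fredholmness of $\mathbf Q$ is already supplied by Lemma~\ref{lIndQ=IndP} under the given spectral hypothesis, so the entire argument rests on two ingredients: showing that $A'$ is compact in the graph norm of $\mathbf Q$, and identifying $\mathbf Q'$ with $\mathbf Q+A'$ on a common domain.

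The central analytic input is Corollary~\ref{corA'u}, which says
\[
\|A'u\|_{H_{b+1}^1(G)}\le c\bigl(\|\mathbf Qu\|_{H_a^0(G)}+\|u\|_{L_2(G)}\bigr) \qquad \forall u\in\Dom(\mathbf Q).
\]
Combining this with the compact embedding $H_{b+1}^1(G)\hookrightarrow H_a^0(G)$ --- available because $b+1>a$ strictly by~\eqref{eqb} and one derivative is gained, in the spirit of Lemma~\ref{lAppALem3.5Kondr} --- yields compactness of $A':\Dom(\mathbf Q)\to H_a^0(G)$ in the graph norm of $\mathbf Q$. To apply the perturbation theorem one still needs $\Dom(\mathbf Q')=\Dom(\mathbf Q)$. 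The inclusion $\Dom(\mathbf Q)\subset\Dom(\mathbf Q')$ is immediate: Corollary~\ref{corA'u} gives $A'u\in H_{b+1}^1(G)\subset H_a^0(G)$, so $\mathbf Au+A'u\in H_a^0(G)$. For the reverse inclusion I would replay the proof of Lemma~\ref{lAsymp} with $\mathbf A+A'$ in place of $\mathbf A$: the principal parts coincide, so Conditions~\ref{condRegLocalProb}--\ref{condSeparK23} still hold and Lemmas~\ref{lSmoothOutsideK}, \ref{lu=P+v}, and~\ref{lAppL2.3GurMatZam05} remain applicable. Decomposing $u=\sum_j P_j+v$ with $v\in H_{b-m+1}^m(G)$, rewrite the equation for $v$ as
\[
\mathbf Av=(\mathbf A+A')u-(\mathbf A+A')\sum_j P_j-A'v
\]
and treat $A'v$ as an additional right-hand side. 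Iterating the model estimate of Lemma~\ref{lAppL2.3GurMatZam05} along the Kondrat'ev scale, one bootstraps the weighted index of $v$ up to $b+1$, obtaining $v\in H_{b+1}^{2m}(G)$. Consequently $A'u\in H_{b+1}^1(G)\subset H_a^0(G)$, whence $\mathbf Au\in H_a^0(G)$ and $u\in\Dom(\mathbf Q)$.

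With the common domain secured, $\mathbf Q'=\mathbf Q+A'$ on $\Dom(\mathbf Q)$, and the classical theorem on relatively compact perturbations of closed Fredholm operators simultaneously delivers the Fredholm property of $\mathbf Q'$ and the equality $\ind\mathbf Q'=\ind\mathbf Q$.

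The hard part will be the reverse inclusion $\Dom(\mathbf Q')\subset\Dom(\mathbf Q)$. A function $u\in\Dom(\mathbf Q')$ is only known a priori to lie in $W^m(G)$, so for $m\ge 2$ neither $\mathbf Au$ nor $A'u$ individually belongs even to $L_2(G)$; only their sum does. Separating the two terms therefore cannot be done by a direct estimate and requires the weighted bootstrap outlined above, in which the remainder $v$ near $\mathcal K$ is successively upgraded in Kondrat'ev regularity until $A'v$ becomes an admissible forcing term in the model elliptic estimate of Lemma~\ref{lAppL2.3GurMatZam05}.
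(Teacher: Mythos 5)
Your argument is essentially the paper's: Fredholmness of $\mathbf Q$ and $\mathbf Q'$ comes from Lemma~\ref{lIndQ=IndP} (applied also to the perturbed problem, which has the same model operator $\tilde{\mathcal L}(\lambda)$), the $\mathbf Q$-compactness of $A'$ comes from Corollary~\ref{corA'u} together with the compact embedding $H_{b+1}^1(G)\subset H_a^0(G)$ of Lemma~\ref{lAppALem3.5Kondr}, and the index equality from Theorem~\ref{thAppBTheor5.26Kato}; the paper merely asserts $\mathbf Q'=\mathbf Q+\mathbf A'$, so your explicit verification of $\Dom(\mathbf Q')\subset\Dom(\mathbf Q)$ fills in a step it leaves implicit. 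For that step, keep your first formulation---rerun the proof of Lemma~\ref{lAsymp} with $\mathbf A+A'$ as the $2m$-order operator, which is legitimate because Lemma~\ref{lAppL2.3GurMatZam05} applies to general operators of order $2m$ with lower-order terms included---rather than the variant that moves $A'v$ to the right-hand side and iterates, since for $m\ge2$ the term $A'v$ involves derivatives of $v$ of order up to $2m-1$ that are not controlled near $\mathcal K$ while $v$ is only known to lie in $H_{b-m+1}^m(G)$, so that iteration cannot get started.
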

\begin{proof}
By Lemma~\ref{lIndQ=IndP}, $\mathbf Q$ and $\mathbf Q'$ have the
Fredholm property.

Introduce the operator $\mathbf A': \Dom(\mathbf A')\subset
L_2(G)\to H_a^0(G)$ given by $
 \mathbf A' u=\mathbf A'(y, D_y)u$, $u\in \Dom(\mathbf A')=\Dom({\mathbf Q}).$
It follows from Corollary~\ref{corA'u} and from the compactness of
the embedding $H_{b+1}^1(G)\subset H_a^0(G)$ (see~\eqref{eqb} and
Lemma~\ref{lAppALem3.5Kondr}) that $\mathbf Q'=\mathbf Q+\mathbf
A'$ and $\mathbf A'$ is a $\mathbf Q$-compact operator. Therefore,
by Theorem~\ref{thAppBTheor5.26Kato}, we have $\ind\mathbf
Q'=\ind\mathbf Q$.
\end{proof}

\begin{proof}[Proof of Theorem~$\ref{thIndP=IndP'}$]
It follows from Lemma~2.1 in~\cite{SkDu90} that the spectrum of
$\tilde{\mathcal L}(\lambda)$ is discrete. Therefore, one can find
a number $a$ such that $m-1<a<m$ and the line $\Im\lambda=a+1-2m$
contains no eigenvalues of $\tilde{\mathcal L}(\lambda)$. In this
case, Lemmas~\ref{lIndQ=IndP} and~\ref{lIndQ=IndQ'} imply $
\ind\mathbf P'=\ind\mathbf Q'=\ind\mathbf Q=\ind\mathbf P. $
\end{proof}

\section{Perturbations in Nonlocal Conditions}\label{secPertNC}

\subsection{Formulation of the main result}

In this section, we investigate the stability of index for
nonlocal operators under the perturbation of nonlocal conditions
by operators which have the same form as $\mathbf B_{i\mu}^1$ and
$\mathbf B_{i\mu}^2$. This situation is more difficult than that
in Sec.~\ref{secPertLOT} because the above perturbations
explicitly change the domain of the corresponding unbounded
operators. Therefore, these perturbations cannot be treated as
relatively compact ones, and we make use of another approach based
on the notion of the {\it gap between closed operators}.

We consider differential operators $C_{i\mu s}(y,D_y)$, $i=1,
\dots, N$, $\mu=1, \dots, m$, $s=1, \dots, S_i'$, of the same
order $m_{i\mu}$ as $B_{i\mu s}$ in Sec.~\ref{subsecSetting},
given by
$$
C_{i\mu s}(y,D_y)u=\sum\limits_{|\alpha|\le m_{i\mu}}c_{i\mu
s\alpha}(y)D^\alpha u,
$$
where $c_{i\mu s\alpha}\in C^\infty(\mathbb R^2)$. Introduce the
operator $\mathbf C_{i\mu}^1$ by the formula
$$
\mathbf C_{i\mu}^1u=\sum\limits_{s=1}^{S_i'}\big(C_{i\mu s}(y,
   D_y)(\zeta u)\big)\big(\Omega_{is}'(y)\big),\
   y\in\Gamma_i\cap\mathcal O_{\varepsilon}(\mathcal K),\qquad
 \mathbf C_{i\mu}^1u=0,\
y\in\Gamma_i\setminus\mathcal O_{\varepsilon}(\mathcal K),
$$
where $\zeta$ and $\varepsilon$ are the same as in the definition
of $\mathbf B_{i\mu}^1$, whereas $\Omega_{is}'$ are
$C^\infty$-diffeomorphisms possessing the same properties as
$\Omega_{is}$ (in particular, they satisfy Condition~\ref{condK1}
with $S_i$ and $\Omega_{is}$ replaced by $S_i'$ and
$\Omega_{is}'$).

We also consider operators $\mathbf C_{i\mu}^2$ satisfying
Condition~\ref{condSeparK23} with $\mathbf B_{i\mu}^2$ replaced by
$\mathbf C_{i\mu}^2$. Set
$$
\mathbf C_{i\mu}=\mathbf C_{i\mu}^1+\mathbf C_{i\mu}^2.
$$

We prove an index stability theorem under the following conditions
(which are assumed to hold along with
Conditions~\ref{condRegLocalProb}--\ref{condSeparK23} throughout
this sections, including the formulation of lemmas).

\begin{condition}[see, e.g.,~\cite{LM}]\label{condNormal} The system $\{\mathbf
B_{i\mu}^0\}_{\mu=1}^m$ is normal on $\overline{\Gamma_i}$, $i=1,
\dots, N$.
\end{condition}

\begin{condition}\label{condC1}
$ D^\sigma c_{i\mu s\alpha}(g_{i1})=D^\sigma c_{i\mu
s\alpha}(g_{i2})=0,\quad |\sigma|=0,\dots,
(m-1)-(m_{i\mu}-|\alpha|)$.
\end{condition}

Denote by $g_{i1}$ and $g_{i2}$ the end points of
$\overline{\Gamma_i}$. Let $\tau_{i1}$ ($\tau_{i2}$) be a unit
vector tangent to $\overline{\Gamma_i}$ at the point $g_{i1}$
($g_{i2}$).

\begin{condition}\label{condC2} $
\dfrac{\partial^\beta \mathbf
C_{i\mu}^2u}{\partial\tau_{i1}^\beta}\bigg|_{y=g_{i1}}=
\dfrac{\partial^\beta \mathbf
C_{i\mu}^2u}{\partial\tau_{i2}^\beta}\bigg|_{y=g_{i2}}=0,\quad
\beta=0,\dots, m-1-m_{i\mu}$, $\forall u\in
W^{2m}(G\setminus\overline{\mathcal O_{\varkappa_1}(\mathcal
  K)})$.
\end{condition}

The following lemma is a consequence of Conditions~\ref{condC1}
and~\ref{condC2} (recall that $m-1<a<m$ throughout).
\begin{lemma}\label{lCinHa}
The following inequalities hold{\rm :}
\begin{equation}\label{lCinHa_1}
\|\mathbf C_{i\mu}^1u\|_{H_a^{2m-m_{i\mu}-1/2}(\Gamma_i)}\le
c_1\|u\|_{H_{a+m}^{2m}(G)},
\end{equation}
\begin{equation}\label{lCinHa_2}
\|\mathbf C_{i\mu}^2u\|_{H_a^{2m-m_{i\mu}-1/2}(\Gamma_i)}\le
c_2\|u\|_{W^{2m}(G\setminus\overline{\mathcal
O_{\varkappa_1}(\mathcal
  K)})}.
\end{equation}
\end{lemma}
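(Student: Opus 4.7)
The plan is to prove the two inequalities independently. The bound \eqref{lCinHa_2} for $\mathbf C_{i\mu}^2$ is essentially a Hardy-type upgrade from an ordinary Sobolev estimate to a weighted one, while the bound \eqref{lCinHa_1} for $\mathbf C_{i\mu}^1$ requires a careful weight count for an extension of $\mathbf C_{i\mu}^1 u$ into $G$, exploiting the high-order vanishing of the coefficients $c_{i\mu s\alpha}$ at the conjugation points.

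For \eqref{lCinHa_2}, the analog of \eqref{eqSeparK23'} for $\mathbf C_{i\mu}^2$ first yields $\mathbf C_{i\mu}^2 u \in W^{2m-m_{i\mu}-1/2}(\Gamma_i)$ with norm controlled by $\|u\|_{W^{2m}(G\setminus\overline{\mathcal O_{\varkappa_1}(\mathcal K)})}$. Fixing an endpoint $g_{i\ell}\in\mathcal K$ and straightening locally, I would apply part~1 of Lemma~\ref{lPsi=P1+Phi} to decompose $\mathbf C_{i\mu}^2 u = P_1(r) + \varphi$, where $P_1$ is a polynomial of degree $\le 2m-m_{i\mu}-2$ and $\varphi\in H_a^{2m-m_{i\mu}-1/2}$. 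Condition~\ref{condC2} forces all tangential derivatives of $\mathbf C_{i\mu}^2 u$ of order $\le m-1-m_{i\mu}$ at $g_{i\ell}$ to vanish, so $P_1$ contains only monomials of degree $\ge m-m_{i\mu}$. Since $m-1<a<m$, a direct computation shows such monomials already lie in $H_a^{2m-m_{i\mu}-1/2}$ near the endpoint, and part~3 of Lemma~\ref{lPsi=P1+Phi} applied to $\varphi$ then yields \eqref{lCinHa_2}.

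For \eqref{lCinHa_1}, since $\mathbf C_{i\mu}^1 u$ vanishes outside $\mathcal O_{\varepsilon_0}(\mathcal K)$, the estimate localizes near each $g_j\in\mathcal K$. I would extend $\mathbf C_{i\mu}^1 u$ to the function $v(y)=\sum_s (C_{i\mu s}(y,D_y)(\zeta u))(\Omega'_{is}(y))$ defined on a full neighborhood of $\Gamma_i\cap\mathcal O_\varepsilon(\mathcal K)$ in $\overline G$ and then apply the trace bound $\|v|_{\Gamma_i}\|_{H_a^{2m-m_{i\mu}-1/2}(\Gamma_i)} \le c\,\|v\|_{H_a^{2m-m_{i\mu}}(G)}$ coming from the definition of the weighted trace space. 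By Condition~\ref{condK1} applied to $\Omega'_{is}$, this map is locally a rotation composed with a homothety of ratio $\chi_{is}$, so derivatives commute with the substitution up to constant factors and $\rho(\Omega'_{is}(y))=\chi_{is}\rho(y)$. A Leibniz expansion of $D^k v$, $k\le 2m-m_{i\mu}$, produces terms of the form $D^\beta c_{i\mu s\alpha}(y)\cdot(D^{\alpha+k-\beta}(\zeta u))(\Omega'_{is}(y))$, and Condition~\ref{condC1} gives $|D^\beta c_{i\mu s\alpha}(y)|\le C r^{\max(0,\,j_0-|\beta|)}$ near $g_j$, where $j_0=m-m_{i\mu}+|\alpha|$. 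After the change of variables $z=\Omega'_{is}(y)$, the weight $\rho^{2(a+k-(2m-m_{i\mu}))}$ from $H_a^{2m-m_{i\mu}}(G)$ combined with the coefficient decay matches precisely the weight $\rho^{2(a+m+(|\alpha|+k-|\beta|)-2m)}$ needed to bound $D^{\alpha+k-\beta}(\zeta u)$ by $\|u\|_{H_{a+m}^{2m}(G)}$.

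The main obstacle is not any single estimate but the bookkeeping for \eqref{lCinHa_1}: one must simultaneously track the order of vanishing of the coefficients (Condition~\ref{condC1}), the rescaling of $\rho$ under $\Omega'_{is}$ (Condition~\ref{condK1}), and the shift from weight $a$ on the output side to weight $a+m$ on the input side, reflecting the gain of $m$ derivatives in passing from $\mathbf C_{i\mu}^1 u$ to $u$. The identity $\max(0,j_0-|\beta|)\ge j_0-|\beta|$ with equality for $|\beta|\le j_0$ is what makes the exponents match exactly; the case $|\beta|>j_0$, in which the coefficient has no guaranteed decay, is harmless because the required exponent of $\rho$ is then negative, so any constant bound on $D^\beta c_{i\mu s\alpha}$ suffices.
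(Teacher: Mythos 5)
Your proof is correct, and it splits into one part that coincides with the paper's and one that is organized differently. For \eqref{lCinHa_2} you follow the paper's route in substance: the paper first gets $\mathbf C_{i\mu}^2u\in W^{2m-m_{i\mu}-1/2}(\Gamma_i)$ from the analogue of \eqref{eqSeparK23'} and then invokes Condition~\ref{condC2} together with Lemma~\ref{lPsig=0}; your argument simply unfolds the proof of Lemma~\ref{lPsig=0} inline (decomposition via Lemma~\ref{lPsi=P1+Phi}, the vanishing of tangential derivatives up to order $m-1-m_{i\mu}$ killing the low monomials, and the observation that $r^\beta$ with $\beta\ge m-m_{i\mu}$ lies in $H_a^{2m-m_{i\mu}-1/2}$ because $a>m-1$). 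For \eqref{lCinHa_1} the paper stays entirely on the boundary: it notes that $(D^\alpha u)\bigl(\Omega_{is}'(y)\bigr)\big|_{\Gamma_i}\in H_{a+m}^{2m-|\alpha|-1/2}(\Gamma_i)\subset H_{a+m-(m_{i\mu}-|\alpha|)}^{2m-m_{i\mu}-1/2}(\Gamma_i)$ and then multiplies by the coefficient using Condition~\ref{condC1} and the one-dimensional multiplication Lemma~\ref{lbPsi}, whose hypothesis on the order of vanishing is exactly $l=m-(m_{i\mu}-|\alpha|)$. You instead extend $\mathbf C_{i\mu}^1u$ into the domain, use the definition of the weighted trace norm, and do the Leibniz/weight bookkeeping by hand, exploiting Condition~\ref{condK1} to get $\rho(\Omega_{is}'(y))=\chi_{is}\rho(y)$ near $\mathcal K$; your exponent count is right (the matching $a+k-2m+m_{i\mu}$ versus $a+m+|\alpha|+k-|\beta|-2m$ works out, and for $|\beta|>j_0$ the available exponent exceeds the needed one, so on a bounded set a constant bound on $D^\beta c_{i\mu s\alpha}$ indeed suffices). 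In effect you re-prove a two-dimensional analogue of Lemma~\ref{lbPsi} fused with the change of variables, which makes your argument more self-contained but heavier; the paper's version is shorter because it reuses the Appendix lemma and the weighted trace/embedding machinery. Both approaches rely on the same two structural facts — the prescribed order of vanishing of the coefficients at the conjugation points and the rotation-plus-homothety structure of $\Omega_{is}'$ near $\mathcal K$ — so the difference is one of packaging rather than of ideas.
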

\begin{proof}
1. For any $u\in H_{a+m}^{2m}(G)$, we have $ (D^\alpha
u)\big(\Omega_{is}'(y)\big)\big|_{\Gamma_i}\in
H_{a+m}^{2m-|\alpha|-1/2}(\Gamma_i)\subset
H_{a+m-(m_{i\mu}-|\alpha|)}^{2m-m_{i\mu}-1/2}(\Gamma_i). $
Therefore, by Condition~\ref{condC1} and Lemma~\ref{lbPsi}, we
have $ (c_{i\mu s\alpha}D^\alpha
u)\big(\Omega_{is}'(y)\big)\big|_{\Gamma_i}\in
H_a^{2m-m_{i\mu}-1/2}(\Gamma_i). $ Estimate~\eqref{lCinHa_1}
follows from the boundedness of the above embedding and from
inequality~\eqref{eqbPsi_A}.

2. It follows from Condition~\ref{condSeparK23} (applied to
$\mathbf C_{i\mu}^2$) that $\mathbf C_{i\mu}^2u\in
  W^{2m-m_{i\mu}-1/2}(\Gamma_i)$ for any $u\in
W^{2m}(G\setminus\overline{\mathcal O_{\varkappa_1}(\mathcal
  K)})$. Now it follows from
Condition~\ref{condC2} and from Lemma~\ref{lPsig=0} that $ \mathbf
C_{i\mu}^2u\in H_a^{2m-m_{i\mu}-1/2}(\Gamma_i). $
Estimate~\eqref{lCinHa_2} follows from
inequality~\eqref{eqSeparK23'} (applied to $\mathbf C_{i\mu}^2$)
and from~\eqref{eqlPsig=0_2}.
\end{proof}

In this section, we write $\mathbf A=\mathbf A(y,D_y).$ Consider
the operators $\mathbf P_t: \Dom(\mathbf P_t)\subset L_2(G)\to
L_2(G)$, $t\in\mathbb C$, given by
$$
 \mathbf P_t u=\mathbf Au,\qquad u\in \Dom(\mathbf P_t)=\{u\in
 W^m(G,\mathbf B+t\mathbf C):\ \mathbf A u\in L_2(G)\},
$$
where $W^m(G,\mathbf B+t\mathbf C)$ is the space of functions
$u\in W^m(G)$ that satisfy the nonlocal conditions $ (\mathbf
B_{i\mu}^0+\mathbf B_{i\mu}^1+t\mathbf C_{i\mu})u=0. $ The main
result of this section (to be proved in Sec.~\ref{subsecGap}) is
as follows.
\begin{theorem}\label{thIndP0=IndP1}
Let Conditions~{\rm \ref{condRegLocalProb}--\ref{condSeparK23}}
and~{\rm \ref{condNormal}--\ref{condC2}} hold. Then $\ind {\bf
P}_t=\const$  $\forall t\in\mathbb C$.
\end{theorem}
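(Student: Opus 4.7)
The strategy is continuation with respect to the parameter $t \in \mathbb{C}$, combined with the classical fact that the index of a Fredholm closed operator is locally constant in the gap topology between unbounded operators. Since $\mathbb{C}$ is connected, once local constancy of $\ind \mathbf{P}_t$ is established, it follows that $\ind \mathbf{P}_t = \ind \mathbf{P}_0 = \ind \mathbf{P}$ for all $t$.

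The first step is to check that $\mathbf{P}_t$ is Fredholm for every $t$. The perturbed boundary operator $\mathbf{B} + t\mathbf{C}$ has exactly the same structural form as $\mathbf{B}$: a local part $\mathbf{B}^0_{i\mu}$, a part $\mathbf{B}^1_{i\mu} + t\mathbf{C}^1_{i\mu}$ supported near $\mathcal{K}$ of the same type, and a part $\mathbf{B}^2_{i\mu} + t\mathbf{C}^2_{i\mu}$ supported outside $\mathcal{K}$ and satisfying Condition~\ref{condSeparK23}. Hence Theorem~\ref{thTheor2.1GurMatZam04} applies directly and gives the Fredholm property of $\mathbf{P}_t$ for every $t \in \mathbb{C}$.

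The heart of the proof is the estimate $\hat{\delta}(\mathbf{P}_t, \mathbf{P}_{t_0}) \le c|t - t_0|$ for $|t - t_0|$ small, where $\hat{\delta}$ is the gap between unbounded operators (computed via the distance between graphs in $L_2(G) \times L_2(G)$). Given $u \in \Dom(\mathbf{P}_{t_0})$ with unit graph norm, I would construct $u_t = u + w_t \in \Dom(\mathbf{P}_t)$ with $\mathbf{A}u_t = \mathbf{A}u$ and $\|w_t\|_{L_2(G)} = O(|t-t_0|)$. The correction $w_t$ must solve the auxiliary nonlocal problem $\mathbf{A} w_t = 0$ in $G$ and $(\mathbf{B} + t\mathbf{C}) w_t = -(t-t_0)\mathbf{C}u$ on $\partial G$. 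Lemma~\ref{lCinHa} is the crucial tool here: Conditions~\ref{condC1} and~\ref{condC2} guarantee that the boundary data $\mathbf{C}u$ lies in the weighted space $\prod H_a^{2m - m_{i\mu}-1/2}(\Gamma_i)$, with norm bounded in terms of the weighted-space norm of $u$ near $\mathcal{K}$ and the $W^{2m}$-norm of $u$ away from $\mathcal{K}$. These in turn are controlled by the graph norm of $u$ via Lemma~\ref{lSmoothOutsideK} (interior/boundary regularity outside $\mathcal{K}$) and the asymptotic decomposition $u = \sum P_j + v$ of Lemma~\ref{lAsymp} (near $\mathcal{K}$, after passing to the weighted-space operator $\mathbf{Q}_t$ as in Section~\ref{secPertLOT}, which has the same index as $\mathbf{P}_t$ by the analogue of Lemma~\ref{lIndQ=IndP}). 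Solvability of the auxiliary problem for $w_t$ with zero elliptic right-hand side and weighted boundary data then follows, modulo a finite-dimensional correction, from the Fredholm property of the bounded operator $\mathbf{L}$ in~\eqref{eqOpL}.

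The main obstacle is that the domain $\Dom(\mathbf{P}_t)$ itself depends on $t$, so compact perturbation arguments are unavailable; moreover, $\mathbf{C}u$ on the boundary would, in general, be singular at the conjugation points to an extent that would prevent solving the auxiliary problem in the required Sobolev class. Conditions~\ref{condC1} and~\ref{condC2} are designed precisely to suppress these singularities and force $\mathbf{C}u \in H_a^{2m-m_{i\mu}-1/2}(\Gamma_i)$, which is exactly the space in which the Fredholm solvability of $\mathbf{L}$ operates. Once the gap estimate is obtained, the standard theorem on stability of the Fredholm index under small gap perturbations yields $\ind \mathbf{P}_t = \ind \mathbf{P}_{t_0}$ for all $t$ in a neighborhood of each $t_0$, and connectedness of $\mathbb{C}$ finishes the proof.
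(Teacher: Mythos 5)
Your overall strategy---Fredholmness for every $t$, a Lipschitz gap estimate in the parameter, Kato's gap-stability theorem, and continuation in $t$---is indeed the paper's strategy, and you correctly identify Lemma~\ref{lCinHa} (i.e., Conditions~\ref{condC1} and~\ref{condC2}) as the mechanism keeping the boundary data $\mathbf C_{i\mu}u$ in the weighted trace spaces. But the central step as you state it does not go through. You claim the estimate $\hat\delta(\mathbf P_t,\mathbf P_{t_0})\le c|t-t_0|$ for graphs in $L_2(G)\times L_2(G)$, constructing $u_t=u+w_t\in\Dom(\mathbf P_t)$ with $\mathbf A u_t=\mathbf A u$, where $w_t$ solves $\mathbf A w_t=0$, $(\mathbf B+t\mathbf C)w_t=-(t-t_0)\mathbf C u$. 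That auxiliary problem is in general unsolvable (the relevant operator has a nontrivial cokernel), and your fallback ``modulo a finite-dimensional correction'' destroys exactly what you need: the corrected right-hand side $\mathbf A w_t=\sum_j\beta_j f_j$ consists of functions which the weighted Fredholm theory provides only in $H_a^0(G)$, so $\mathbf A u_t$ need no longer lie in $L_2(G)$---hence $u_t\notin\Dom(\mathbf P_t)$---and in any case $\mathbf A u_t\ne\mathbf A u$, so the $L_2\times L_2$ graph distance is not controlled by $|t-t_0|$. This is precisely why the paper never estimates the gap between the operators $\mathbf P_t$ themselves: Theorem~\ref{thSmallGap} establishes the gap estimate for the weighted operators $\mathbf Q_t:\Dom(\mathbf Q_t)\subset L_2(G)\to H_a^0(G)$, whose graphs live in $L_2(G)\times H_a^0(G)$ and whose domains tolerate right-hand sides in $H_a^0(G)$, and only afterwards is the conclusion transferred to $\mathbf P_t$ through $\ind\mathbf P_t=\ind\mathbf Q_t$ (Lemma~\ref{lIndQ=IndP} together with Remark~\ref{remOneAndTheSameL}). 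You mention $\mathbf Q_t$, but only as a device for norm control near $\mathcal K$, not as the operator for which the gap is actually estimated; as written, your gap estimate for $\mathbf P_t$ is unproved.

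To repair the step you would have to either (i) redo the construction so that the finite-dimensional correction functions $f_j$ can be chosen in $L_2(G)$ with bounds uniform in $s$---nontrivial, since the cokernel of the perturbed operator varies with $s$---or (ii) follow the paper: first reduce to homogeneous boundary conditions via Lemma~\ref{lHomogeneous} (this is where Condition~\ref{condNormal} enters, a point your sketch leaves implicit), then solve the remaining problem on a complement $E_t$ of $\ker\mathbf L_t$, using Theorem~\ref{thAppBTheorSec16Kr} to obtain solvability and estimates uniform in $s$, which yields the gap estimate for $\mathbf Q_t$, and finally invoke Lemma~\ref{lIndQ=IndP}. One further omission: the a priori bound $\|u\|_{H_{a+m}^{2m}(G)}\le c_t\|(u,\mathbf A u)\|$ for $u\in\Dom(\mathbf Q_{t+s})$ (Lemma~\ref{lAprEstQt+s}), on which the estimate of $\mathbf C_{i\mu}u$ rests, requires choosing $a$ so that the line $\Im\lambda=a+1-m$, and not only $\Im\lambda=a+1-2m$, is free of eigenvalues of $\tilde{\mathcal L}(\lambda)$; your proposal never fixes such an $a$, although the discreteness of the spectrum makes this choice possible.
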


\subsection{The gap between nonlocal operators in weighted
spaces}\label{subsecGap}

As in Sec.~\ref{secPertLOT}, we preliminarily study the operators
$\mathbf Q_t: \Dom(\mathbf Q_t)\subset L_2(G)\to H_a^0(G)$ given
by
$$
 \mathbf Q_t u=\mathbf Au,\qquad u\in \Dom(\mathbf Q_t)=\{u\in
 W^m(G,\mathbf B+t\mathbf C):\ \mathbf Au\in H_a^0(G)\},
$$
where $t\in\mathbb C$ and $W^m(G,\mathbf B+t\mathbf C)$ is the
same as in the definition of the operator $\mathbf P_t$. The
operators $\mathbf P_t$ and $\mathbf Q_t$ correspond to the
problem
\begin{gather}
 \mathbf Au=f(y) \quad (y\in G),\label{eqPinGPert}\\
     (\mathbf B_{i\mu}^0+\mathbf B_{i\mu}^1+t\mathbf C_{i\mu})u=0\quad
   (y\in \Gamma_i;\ i=1, \dots, N;\ \mu=1, \dots, m).\label{eqBinGPert}
\end{gather}

\begin{remark}\label{remOneAndTheSameL}
The operator $\tilde{\mathcal L}(\lambda)$ was constructed in
Sec.~\ref{subsecSettingNearK} by means of principal homogeneous
parts of the operators $\mathbf A$ and $B_{i\mu s}(y, D_y)$ at the
points of the set $\mathcal K$. Due to Condition~\ref{condC1}, the
principal homogeneous parts of the operators $C_{i\mu s}(y, D_y)$
are equal to zero. Therefore, one and the same operator
$\tilde{\mathcal L}(\lambda)$ corresponds to
problem~(\ref{eqPinGPert}), (\ref{eqBinGPert}) for any~$t$.
\end{remark}

Fix a number $a$ such that $m-1<a<m$ and the line
$\Im\lambda=a+1-2m$ contains no eigenvalues of $\tilde{\mathcal
L}(\lambda)$ (which is possible due to the discreteness of the
spectrum of $\tilde{\mathcal L}(\lambda)$). It follows from
Remark~\ref{remOneAndTheSameL} and from Lemma~\ref{lIndQ=IndP}
that $\mathbf Q_t$ has the Fredholm property. Therefore, its graph
$\Gr \mathbf Q_t$ is a closed subspace in the Hilbert space
$L_2(G)\times H_a^0(G)$; this space is endowed with the norm
$$
\|(u,f)\|=\left(\|u\|_{L_2(G)}^2+\|f\|_{H_a^0(G)}^2\right)^{1/2}\quad\forall
(u,f)\in L_2(G)\times H_a^0(G).
$$

Denote
\begin{equation}\label{eqDeltaTT+S}
\delta(\mathbf Q_t,\mathbf Q_{t+s})=\sup\limits_{u\in\Dom(\mathbf
Q_t):\ \|(u, \mathbf Q_tu)\|=1}\dist\big((u, \mathbf Q_tu),\Gr
\mathbf Q_{t+s}\big).
\end{equation}
By Definition~\ref{defGap}, the number $\hat\delta(\mathbf
Q_t,\mathbf Q_{t+s})=\max\{\delta(\mathbf Q_t,\mathbf
Q_{t+s}),\delta(\mathbf Q_{t+s},\mathbf Q_t)\}$ is the {\it gap
between the operators $\mathbf Q_t$ and $\mathbf Q_{t+s}$}.

The main tool which enables us to prove the index stability
theorem is Theorem~\ref{thAppBTheor5.17Kato} and the following
result (to be proved later on).
\begin{theorem}\label{thSmallGap}
Let Conditions~{\rm \ref{condRegLocalProb}--\ref{condSeparK23}}
and~{\rm \ref{condNormal}--\ref{condC2}} hold. Suppose that the
lines $\Im\lambda=a+1-2m$ and $\Im\lambda=a+1-m$ contain no
eigenvalues of $\tilde{\mathcal L}(\lambda)$. Then
\begin{equation}\label{eqSmallGap_1}
\hat\delta(\mathbf Q_t,\mathbf Q_{t+s})\le c_t s,\quad |s|\le s_t,
\end{equation}
where $s_t>0$ is sufficiently small, while $c_t>0$ does not depend
on $s$.
\end{theorem}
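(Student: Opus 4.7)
Since $\hat\delta(\mathbf Q_t,\mathbf Q_{t+s})=\max\{\delta(\mathbf Q_t,\mathbf Q_{t+s}),\delta(\mathbf Q_{t+s},\mathbf Q_t)\}$, it suffices to bound each one-sided gap by $c_t|s|$; the two estimates are symmetric under the interchange $t\leftrightarrow t+s$, so I would only treat $\delta(\mathbf Q_t,\mathbf Q_{t+s})$. Unwinding~\eqref{eqDeltaTT+S}, the goal is: for every $u\in\Dom(\mathbf Q_t)$ with $\|(u,\mathbf Q_t u)\|=1$, construct $v\in\Dom(\mathbf Q_{t+s})$ with $\|u-v\|_{L_2(G)}+\|\mathbf A(u-v)\|_{H_a^0(G)}\le c_t|s|$. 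Setting $v=u-w$ and using that $(\mathbf B_{i\mu}^0+\mathbf B_{i\mu}^1+t\mathbf C_{i\mu})u=0$, this reduces to solving the inhomogeneous nonlocal problem
$$
\mathbf A w=f,\qquad (\mathbf B_{i\mu}^0+\mathbf B_{i\mu}^1+(t+s)\mathbf C_{i\mu})w=s\mathbf C_{i\mu}u,
$$
for some $f\in H_a^0(G)$ with $\|f\|_{H_a^0(G)}+\|w\|_{L_2(G)}\le c_t|s|$.

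The first step is to bound the boundary data $s\mathbf C_{i\mu}u$ in the weighted trace norm. I would extend Lemma~\ref{lAsymp} from $\mathbf Q$ to $\mathbf Q_t$: in the decomposition $u=\sum_j P_j+\tilde u$, the extra term $t\mathbf C_{i\mu}u$ appears on the right of the nonlocal conditions for $\tilde u$, but by Lemma~\ref{lCinHa} it stays in the appropriate weighted trace space, so the same application of Lemma~\ref{lAppL2.3GurMatZam05} yields $\tilde u\in H_{b+1}^{2m}(G)$ with norm controlled by $\|(u,\mathbf Q_t u)\|$. The extra eigenvalue-free line $\Im\lambda=a+1-m$ is needed at the intermediate $H^m$-scale step where one reabsorbs the $\mathbf C$-terms. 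This gives $\|u\|_{H_{a+m}^{2m}(G)}+\|u\|_{W^{2m}(G\setminus\overline{\mathcal O_{\varkappa_1}(\mathcal K)})}\le c_t\|(u,\mathbf Q_t u)\|$, and Lemma~\ref{lCinHa} then produces $\|s\mathbf C_{i\mu}u\|_{H_a^{2m-m_{i\mu}-1/2}(\Gamma_i)}\le c_t|s|$.

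For the second step, consider the bounded operator
$$
\mathbf L_{t+s}=\{\mathbf A,\mathbf B_{i\mu}^0+\mathbf B_{i\mu}^1+(t+s)\mathbf C_{i\mu}\}:H_a^{2m}(G)\to\mathcal H_a^0(G,\Gamma)\dotplus\mathcal R_a^0(G,\Gamma).
$$
By Remark~\ref{remOneAndTheSameL} the $\mathbf C$-perturbation does not alter $\tilde{\mathcal L}(\lambda)$, so Theorem~6.1 in~\cite{GurRJMP04} again applies and $\mathbf L_{t+s}$ is Fredholm. Applying a bounded right regularizer modulo the finite-dimensional cokernel to $(0,s\mathbf C_{i\mu}u)$, I would obtain $w\in H_a^{2m}(G)$ and $f\in H_a^0(G)$ solving $\mathbf L_{t+s}w=(f,s\mathbf C_{i\mu}u)$ with $\|w\|_{H_a^{2m}(G)}+\|f\|_{H_a^0(G)}\le c_t|s|$. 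Since $a<m$, the continuous embedding $H_a^{2m}(G)\hookrightarrow W^m(G)\hookrightarrow L_2(G)$ gives $\|w\|_{L_2(G)}\le c_t|s|$, while $\mathbf A w=f$ has $\|\mathbf A w\|_{H_a^0(G)}\le c_t|s|$; then $v=u-w$ lies in $\Dom(\mathbf Q_{t+s})$ and satisfies the required inequality.

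The principal obstacle is handling the finite-dimensional cokernel in Step~2: the RHS $(0,s\mathbf C_{i\mu}u)$ need not lie in the exact image of $\mathbf L_{t+s}$, so one cannot demand $f=0$. The key point I expect to exploit is that the cokernel has fixed finite dimension independent of $u$ and $s$, while the projection of $(0,s\mathbf C_{i\mu}u)$ onto it has norm $O(|s|)$ by Step~1; it can therefore be cancelled by an $O(|s|)$ correction absorbed into the PDE right-hand side $f$, preserving $\|f\|_{H_a^0}\le c_t|s|$. Uniformity of the constant $c_t$ in $s$ follows because $\mathbf L_{t+s}$ and its pseudoinverse depend continuously on $s$ in operator norm, so for $|s|\le s_t$ small enough the construction produces a bound independent of $u$.
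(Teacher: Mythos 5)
Your overall skeleton is the same as the paper's: reduce to the one-sided gap, seek $v=u-w$ (the paper's $v_s=u+w_s$), bound the boundary datum $s\mathbf C_{i\mu}u$ in $H_a^{2m-m_{i\mu}-1/2}(\Gamma_i)$ by $c_t|s|\cdot\|(u,\mathbf Au)\|$ via a weighted a priori estimate of $u$ in $H_{a+m}^{2m}(G)$ (this is the content of Lemma~\ref{lAprEstQt+s} and Corollary~\ref{corAprEstCimu}; the line $\Im\lambda=a+1-m$ is the eigenvalue-free line for the auxiliary bounded operator in the weight $a+m$), and then solve a perturbed problem for $w$, allowing an $O(|s|)$ right-hand side in the equation to account for the finite-dimensional obstruction.

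The genuine gap is in your Step~2. You claim the cokernel component of $(0,s\mathbf C_{i\mu}u)$ ``can be cancelled by an $O(|s|)$ correction absorbed into the PDE right-hand side $f$,'' justified only by the finite dimension of the cokernel and the $O(|s|)$ size of the datum. That is not sufficient: varying $f$ moves you only inside the subspace $H_a^0(G)\times\{0\}$, and in general $\mathcal R(\mathbf L_{t+s})+\bigl(H_a^0(G)\times\{0\}\bigr)$ need not exhaust $\mathcal H_a^0(G,\Gamma)$, so there can be boundary data, however small, which are not attained exactly by any pair $(f,w)$; smallness does not help against a fixed ``bad'' direction. Likewise, a right regularizer applied to $(0,s\mathbf C_{i\mu}u)$ leaves a finite-rank error whose boundary components are in general nonzero, so your $v=u-w$ would not satisfy the perturbed nonlocal conditions exactly and hence would not belong to $\Dom(\mathbf Q_{t+s})$ at all. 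What is actually needed is that the boundary operator $u\mapsto\{(\mathbf B_{i\mu}^0+\mathbf B_{i\mu}^1+(t+s)\mathbf C_{i\mu})u\}$ maps $H_a^{2m}(G)$ onto $\prod_{i,\mu}H_a^{2m-m_{i\mu}-1/2}(\Gamma_i)$ with a bound uniform in $|s|\le1$; this is Lemma~\ref{lHomogeneous}, proved via Lemma~\ref{lAppL8.1GurRJMP04} and a two-stage construction handling $\mathbf C_{i\mu}^2$, and it is the only place where Condition~\ref{condNormal} enters --- a hypothesis you list but never use. Only after this exact lifting (the paper's $W_s$) does the remaining problem have homogeneous boundary conditions, and only then does projecting the residual right-hand side onto the solvable directions (the paper's $\beta_j^sf_j^s$, working on the complement $E_t$ of $\ker\mathbf L_t$ with uniformity in $s$ supplied by Theorem~\ref{thAppBTheorSec16Kr}) yield what you describe; as written, your Step~2 does not go through.
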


First, we prove several auxiliary results.

\begin{lemma}\label{lAprEstQt+s}
Let the line $\Im\lambda=a+1-m$ contain no eigenvalues of
$\tilde{\mathcal L}(\lambda)$. Then
\begin{equation}\label{eqAprEstQt+s}
\|u\|_{H_{a+m}^{2m}(G)}\le c_t\|(u,\mathbf Au)\|\quad \forall
u\in\Dom(\mathbf Q_{t+s}),
\end{equation}
where $c_t>0$ does not depend on $s$ and $u$, provided that $|s|$
is sufficiently small.
\end{lemma}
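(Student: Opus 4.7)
The strategy is to apply the model-problem regularity estimate Lemma~\ref{lAppL2.3GurMatZam05} with weighted index $a+m$ in place of $a$; the applicability is ensured precisely by the hypothesis that the line $\Im\lambda=a+1-m$ contains no eigenvalues of $\tilde{\mathcal L}(\lambda)$. First I would verify the admissibility assumptions of that lemma. Since $u\in\Dom(\mathbf Q_{t+s})\subset W^m(G)$ and $a>m-1$, the weighted embedding $W^m(G)\hookrightarrow H_{a-m}^0(G)$ supplies the required $U_j\in H_{(a+m)-2m}^0(K_j)$; the bulk datum $\mathbf A u$ lies in $H_a^0(G)\subset H_{a+m}^0(G)$; and a local regularity result modelled on Lemma~\ref{lSmoothOutsideK}, adapted to the boundary conditions of $\mathbf Q_{t+s}$ (note that $\mathbf C_{i\mu}^2$ satisfies Condition~\ref{condSeparK23} by construction), provides $u\in W^{2m}(G\setminus\overline{\mathcal O_{\varkappa_1}(\mathcal K)})$ with the corresponding bound by $\|\mathbf A u\|_{H_a^0(G)}+\|u\|_{L_2(G)}$.

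Next I would write out the right-hand sides of the model problem near each $g_j$. Using the homogeneous boundary condition $(\mathbf B^0+\mathbf B^1+(t+s)\mathbf C)u=0$, the model boundary datum equals $-(t+s)\mathbf C u$ plus residuals coming from the difference between $\mathbf B^0+\mathbf B^1$ and its model (principal-part, rotation/homothety) form at $g_j$. The $\mathbf C^2$- and $\mathbf B^2$-type residuals are bounded in $H_{a+m}^{2m-m_{i\mu}-1/2}$ by $\|u\|_{W^{2m}(G\setminus\overline{\mathcal O_{\varkappa_1}(\mathcal K)})}$, and hence by $\|\mathbf A u\|_{H_a^0(G)}+\|u\|_{L_2(G)}$, whereas the $\mathbf C^1$-contribution, via Lemma~\ref{lCinHa}, is dominated by $c\,|t+s|\,\|u\|_{H_{a+m}^{2m}(G)}$. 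Applying Lemma~\ref{lAppL2.3GurMatZam05} with index $a+m$ then yields an estimate of the schematic form
\[
\|u\|_{H_{a+m}^{2m}(G)}\le C\bigl(\|\mathbf A u\|_{H_a^0(G)}+\|u\|_{L_2(G)}+|t+s|\,\|u\|_{H_{a+m}^{2m}(G)}\bigr),
\]
with $C$ independent of $s$, $t$, and $u$.

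The main obstacle is the circular appearance of $\|u\|_{H_{a+m}^{2m}}$ on the right through the $\mathbf C^1$-term. My way around it is first to handle the unperturbed case $s=0$: the bounded operator $\{\mathbf A,\mathbf B^0+\mathbf B^1+t\mathbf C\}\colon H_{a+m}^{2m}(G)\to\mathcal H_{a+m}^0(G,\Gamma)\dotplus\mathcal R_{a+m}^0(G,\Gamma)$ is Fredholm, being a compact perturbation of the operator $\mathbf L$ of~(\ref{eqOpL}), viewed at index $a+m$, by the difference $\mathbf B^2-t\mathbf C$; compactness of the latter follows by combining Lemma~\ref{lCinHa}, Condition~\ref{condSeparK23}, and the Kondrat'ev-type compact embeddings used in the proof of Lemma~\ref{lIndQ=IndP}. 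Together with the compact embedding $H_{a+m}^{2m}(G)\subset L_2(G)$ and Theorem~\ref{thAppBLem7.1Kr}, this produces $\|u\|_{H_{a+m}^{2m}}\le c_t(\|\mathbf A u\|_{H_a^0}+\|u\|_{L_2})$ for $u\in\Dom(\mathbf Q_t)$. For $|s|<s_t$ sufficiently small, the extra boundary contribution $s\mathbf C u$ adds at most $|s|\cdot c\,\|u\|_{H_{a+m}^{2m}}$ on the right, which is absorbed by the standard argument to yield the desired $s$-uniform estimate. The technical heart will be verifying the compactness of the perturbation $\mathbf B^2-t\mathbf C$ at weighted index $a+m$, for which the vanishing of lower-order coefficients at the conjugation points (Conditions~\ref{condC1} and~\ref{condC2}) is essential.
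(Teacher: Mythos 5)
Your final structure is the paper's: establish an a priori estimate at weight $a+m$ for the bounded operator $\{\mathbf A,\mathbf B_{i\mu}^0+\mathbf B_{i\mu}^1+t\mathbf C_{i\mu}\}:H_{a+m}^{2m}(G)\to\mathcal H_{a+m}^0(G,\Gamma)$ via its Fredholm property, the compact embedding $H_{a+m}^{2m}(G)\subset L_2(G)$ and Theorem~\ref{thAppBLem7.1Kr}, then note that for $u\in\Dom(\mathbf Q_{t+s})$ the $t$-boundary data equal $-s\mathbf C_{i\mu}u$ and absorb the resulting $|s|\,\|u\|_{H_{a+m}^{2m}(G)}$ term for small $|s|$. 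However, the way you justify the key Fredholm property contains a genuine gap: you claim that $\mathbf B_{i\mu}^2-t\mathbf C_{i\mu}$ is a \emph{compact} perturbation at weight $a+m$, and this is false in general. Condition~\ref{condSeparK23} is only a boundedness assumption: an operator of exact order $m_{i\mu}$ supported away from $\mathcal K$ (e.g.\ traces of $m_{i\mu}$-th derivatives on a shifted curve, or even a local operator with coefficient vanishing near $\mathcal K$) satisfies \eqref{eqSeparK23'}, \eqref{eqSeparK23''} without any smoothing, hence is not compact into $W^{2m-m_{i\mu}-1/2}(\Gamma_i)$ or $H_{a+m}^{2m-m_{i\mu}-1/2}(\Gamma_i)$. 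Likewise, Conditions~\ref{condC1}--\ref{condC2} buy only a \emph{weight} improvement (Lemma~\ref{lCinHa} sends $H_{a+m}^{2m}(G)$ into $H_{a}^{2m-m_{i\mu}-1/2}(\Gamma_i)$, a gain of $m$ in the weight index with no gain in smoothness), and embeddings of weighted trace spaces with the same smoothness index are never compact (away from $\mathcal K$ the norms are equivalent; Lemma~\ref{lAppALem3.5Kondr} requires $k_2>k_1$). Indeed, non-compactness of exactly these perturbations is the reason the paper needs the gap/continuation machinery of Sec.~\ref{secPertNC} at all, so a compact-perturbation shortcut here cannot work.

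The conclusion you need is nevertheless true, and the correct route is the paper's: the operator $\mathbf M_t$ belongs to the same class as $\mathbf L$ (the terms $\mathbf B_{i\mu}^1+t\mathbf C_{i\mu}^1$ are of $\mathbf B^1$-type, $t\mathbf C_{i\mu}^2$ is of $\mathbf B^2$-type), so its Fredholm property at weight $a+m$ follows directly from Theorem~6.1 in~\cite{GurRJMP04} together with Remark~\ref{remOneAndTheSameL}, the model operator $\tilde{\mathcal L}(\lambda)$ being unchanged; this is precisely where the hypothesis that $\Im\lambda=a+1-m$ (the line associated with weight $a+m$) is free of eigenvalues enters. Relatedly, your first paragraph misplaces that hypothesis: Lemma~\ref{lAppL2.3GurMatZam05} is a regularity estimate valid for every $a\in\mathbb R$ and carries no spectral condition, and the membership $u\in H_{a+m}^{2m}(G)$ for $u\in\Dom(\mathbf Q_{t+s})$ is obtained (as in the paper) from Lemma~\ref{lAsymp}, whose bootstrap handles the nonlocal right-hand sides without the circularity your direct application of the model lemma would face.
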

\begin{proof}
1. Consider the bounded operator
\begin{equation}\label{eqAprEstQt+s_Mt}
\mathbf M_t=\{{\bf A},\mathbf B_{i\mu}^0+\mathbf
B_{i\mu}^1+t\mathbf C_{i\mu}\}: H_{a+m}^{2m}(G)\to \mathcal
H_{a+m}^0(G,\Gamma).
\end{equation}
Since the belonging $v\in H_{a+m}^{2m}(G)$ implies $ (\mathbf
B_{i\mu}^0+\mathbf B_{i\mu}^1+t\mathbf C_{i\mu}^1)v\in
H_{a+m}^{2m-m_{i\mu}-1/2}(\Gamma_i)$ and $\mathbf C_{i\mu}^2v\in
W^{2m-m_{i\mu}-1/2}(\Gamma_i)\subset
H_{a+m}^{2m-m_{i\mu}-1/2}(\Gamma_i) $ (the latter relations are
due to Condition~\ref{condSeparK23} and part~1 of
Lemma~\ref{lPsi=P1+Phi}), it follows that the operator $\mathbf
M_t$ is well defined.

By Theorem~6.1 in~\cite{GurRJMP04} and by
Remark~\ref{remOneAndTheSameL}, the operator $\mathbf M_t$ has the
Fredholm property for any $t\in\mathbb C$. Therefore, applying
Theorem~\ref{thAppBLem7.1Kr} and noting that the embedding
$H_{a+m}^{2m}\subset L_2(G)$ is compact for $a<m$ (see
Lemma~\ref{lAppALem3.5Kondr}), we obtain
\begin{equation}\label{eqAprEstQt+s_1}
\|u\|_{H_{a+m}^{2m}(G)}\le k_1\left(\|\mathbf M_t u\|_{\mathcal
H_{a+m}^0(G,\Gamma)}+\|u\|_{L_2(G)}\right)\quad \forall u\in
H_{a+m}^{2m}(G),
\end{equation}
where $k_1>0$ may depend on $t$ but does not depend on $s$ and
$u$.

2. Now take a function $u\in\Dom(\mathbf Q_{t+s})$.  By
Lemma~\ref{lAsymp}, $u\in H_{a+m}^{2m}(G)$.
Inequality~\eqref{eqAprEstQt+s_1}, estimate~\eqref{eqSeparK23'}
(for $\mathbf C^2_{i\mu}$), and the boundedness of the embedding
$W^{2m-m_{i\mu}-1/2}(\Gamma_i)\subset
H_{a+m}^{2m-m_{i\mu}-1/2}(\Gamma_i)$ (see part~1 of
Lemma~\ref{lPsi=P1+Phi}) yield
$$
\|u\|_{H_{a+m}^{2m}(G)}\le k_1\left(\|\mathbf A
u\|_{H_{a+m}^0(G)}+\|u\|_{L_2(G)}\right)+k_2|s|\cdot\|u\|_{H_{a+m}^{2m}(G)}\quad
\forall u\in\Dom(\mathbf Q_{t+s}),
$$
where $k_2>0$ may depend on $t$ but does not depend on $s$ and
$u$. Choosing $|s|\le 1/(2k_2)$ and noting that the embedding
$H_{a}^0(G)\subset H_{a+m}^0(G)$ is bounded, we
obtain~\eqref{eqAprEstQt+s}.
\end{proof}

Lemmas~\ref{lCinHa} and~\ref{lAprEstQt+s} imply:
\begin{corollary}\label{corAprEstCimu}
Let the line $\Im\lambda=a+1-m$ contain no eigenvalues of
$\tilde{\mathcal L}(\lambda)$. Then
\begin{equation}\label{eqAprEstCimu}
\|\mathbf C_{i\mu}u\|_{H_a^{2m-m_{i\mu}-1/2}(\Gamma_i)}\le
c_t\|(u,\mathbf Au)\|\quad \forall u\in\Dom(\mathbf Q_{t+s}),
\end{equation}
where $c_t>0$ does not depend on $s$ and $u$, provided that $|s|$
is sufficiently small.
\end{corollary}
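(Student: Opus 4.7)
The plan is a short bookkeeping step that combines Lemmas~\ref{lCinHa} and~\ref{lAprEstQt+s} by a triangle-inequality argument. First I would decompose $\mathbf C_{i\mu}=\mathbf C_{i\mu}^1+\mathbf C_{i\mu}^2$ and apply Lemma~\ref{lCinHa} to each summand, giving
\begin{equation*}
\|\mathbf C_{i\mu}u\|_{H_a^{2m-m_{i\mu}-1/2}(\Gamma_i)}\le c_1\|u\|_{H_{a+m}^{2m}(G)}+c_2\|u\|_{W^{2m}(G\setminus\overline{\mathcal O_{\varkappa_1}(\mathcal K)})}.
\end{equation*}
Note that at this stage no use has yet been made of the structure of $\Dom(\mathbf Q_{t+s})$; the inequality is a purely functional-analytic statement based on Conditions~\ref{condC1} and~\ref{condC2}.

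Next, for $u\in\Dom(\mathbf Q_{t+s})$ and $|s|$ sufficiently small, Lemma~\ref{lAprEstQt+s} controls the first summand directly by $c_t\|(u,\mathbf Au)\|$. For the second summand I would observe that on the complement $G\setminus\overline{\mathcal O_{\varkappa_1}(\mathcal K)}$ the weight $\rho(y)=\dist(y,\mathcal K)$ is bounded above and bounded away from $0$, so the weighted Kondrat'ev norm $\|\cdot\|_{H_{a+m}^{2m}(G)}$ majorizes (up to a constant depending only on $\varkappa_1$, $a$, and $m$) the standard Sobolev norm $\|\cdot\|_{W^{2m}(G\setminus\overline{\mathcal O_{\varkappa_1}(\mathcal K)})}$. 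Feeding this observation back and applying Lemma~\ref{lAprEstQt+s} once more yields the desired estimate~\eqref{eqAprEstCimu}.

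There is no real obstacle to overcome here: the substantive work has already been carried out in Lemma~\ref{lCinHa}, which exploits the vanishing Conditions~\ref{condC1}, \ref{condC2} to upgrade ordinary Sobolev control of $\mathbf C_{i\mu}^j u$ to weighted control in $H_a^{2m-m_{i\mu}-1/2}(\Gamma_i)$, and in Lemma~\ref{lAprEstQt+s}, which supplies a priori $H_{a+m}^{2m}$-regularity with a constant uniform in small $s$. The corollary simply repackages these two estimates into a single bound for $\mathbf C_{i\mu}u$ by the graph norm of $\mathbf Q_{t+s}$, which is the form required to estimate $\delta(\mathbf Q_t,\mathbf Q_{t+s})$ in~\eqref{eqDeltaTT+S} and thus to drive the proof of Theorem~\ref{thSmallGap}.
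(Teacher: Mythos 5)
Your proposal is correct and follows exactly the paper's (one-line) argument: the corollary is stated as an immediate consequence of Lemmas~\ref{lCinHa} and~\ref{lAprEstQt+s}, and your triangle-inequality decomposition plus the bound of the first term by~\eqref{eqAprEstQt+s} is precisely that. Your observation that, outside $\overline{\mathcal O_{\varkappa_1}(\mathcal K)}$, the weight $\rho$ is bounded above and away from zero, so that $\|u\|_{W^{2m}(G\setminus\overline{\mathcal O_{\varkappa_1}(\mathcal K)})}\le c\,\|u\|_{H_{a+m}^{2m}(G)}$, correctly supplies the only step the paper leaves implicit.
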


The following two lemmas enable us to reduce nonlocal problems
with nonhomogeneous nonlocal conditions to nonlocal problems with
homogeneous ones. This is the place where
Condition~\ref{condNormal} is needed.

\begin{lemma}[see Lemma~8.1 in~\cite{GurRJMP04}]\label{lAppL8.1GurRJMP04}
Let $a\in\mathbb R$. For any right-hand sides $f_{j\sigma\mu}\in
H_a^{2m-m_{j\sigma\mu}-1/2}(\gamma_{j\sigma})$ in~\eqref{eqBinK}
such that $\supp f_{j\sigma\mu}\subset
\gamma_{j\sigma}^{\varepsilon/2}$, there exist functions $U_j\in
H_a^{2m}(K_j)$ such that $\supp U_{j}\subset
\overline{K_j^{\varepsilon}}$,
$$
B_{j\sigma\mu j0}(y, D_y)U_j(y)=f_{j\sigma\mu}(y),\quad
(B_{j\sigma\mu ks}(y, D_y)U_k)({\mathcal G}_{j\sigma ks}y)=0,\
y\in\gamma_{j\sigma},\ (k,s)\ne(j,0),
$$
$$
\sum\limits_j\|U_j\|_{H_a^{2m}(K_j)}\le
c\sum\limits_{j,\sigma,\mu}
\|f_{j\sigma\mu}\|_{H_a^{2m-m_{j\sigma\mu}-1/2}(\gamma_{j\sigma})}.
$$
\end{lemma}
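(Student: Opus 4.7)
\textbf{Proof proposal for Lemma~\ref{lAppL8.1GurRJMP04}.}
The plan is to lift the given trace data on each side $\gamma_{j\sigma}$ separately using the normality of the system $\{B_{j\sigma\mu j0}(y,D_y)\}_{\mu=1}^{m}$, and then to use a purely geometric observation to insert a cutoff that kills all the \emph{nonlocal} traces without disturbing the \emph{local} ones. The key geometric fact is that, by the inequality $|(-1)^\sigma b_j+\omega_{j\sigma ks}|<b_k$ (cf.\ Remark~\ref{remK1}), every ray $\mathcal{G}_{j\sigma ks}(\gamma_{j\sigma})$ with $(k,s)\ne(j,0)$ lies \emph{strictly inside} $K_k$, at a positive angular distance $\delta>0$ from both sides $\gamma_{k1},\gamma_{k2}$. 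Hence there is room between the boundary of $K_k$ and these image rays in which a cutoff can descend from $1$ to $0$.

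First, by Condition~\ref{condNormal} the system $\{B_{j\sigma\mu j0}(y,D_y)\}_{\mu=1}^m$ is normal on $\overline{\Gamma_i}$, and thus also on each $\gamma_{j\sigma}$ after the change of variables $Y_j$. I would invoke the standard right inverse of the trace map in the weighted Kondrat'ev setting (the weighted analog of the classical lifting for normal boundary systems in~\cite{LM}; this is available in the present paper's framework because $\gamma_{j\sigma}$ is a straight ray and multiplication by $r^{a+|\alpha|-2m}$ behaves well under the lifting): for each fixed $(j,\sigma)$, given $f_{j\sigma\mu}\in H_a^{2m-m_{j\sigma\mu}-1/2}(\gamma_{j\sigma})$ supported in $\gamma_{j\sigma}^{\varepsilon/2}$, construct $V_{j\sigma}\in H_a^{2m}(K_j)$ with
$$
 B_{j\sigma\mu j0}(y,D_y)V_{j\sigma}(y)=f_{j\sigma\mu}(y)\quad\text{on }\gamma_{j\sigma},\qquad
 \|V_{j\sigma}\|_{H_a^{2m}(K_j)}\le c\sum_{\mu}\|f_{j\sigma\mu}\|_{H_a^{2m-m_{j\sigma\mu}-1/2}(\gamma_{j\sigma})}.
$$
Choose now, for each $(j,\sigma)$, a smooth cutoff $\eta_{j\sigma}(\omega)$ equal to $1$ in an angular neighborhood of $(-1)^\sigma\omega_j$ of opening $<\delta/2$ and supported in a slightly larger one (still of opening $<\delta$), and a radial cutoff $\zeta_0(r)$ equal to $1$ for $r<\varepsilon/2$ with $\supp\zeta_0\subset[0,\varepsilon)$. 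Set $\chi_{j\sigma}(y):=\eta_{j\sigma}(\omega)\zeta_0(r)$ and
$$
 U_j(y):=\chi_{j1}(y)V_{j1}(y)+\chi_{j2}(y)V_{j2}(y).
$$

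It remains to verify the three claims. First, $\supp U_j\subset\overline{K_j^\varepsilon}$ by construction of $\zeta_0$. Second, since $\eta_{j\sigma}\equiv1$ on $\gamma_{j\sigma}$ near the origin and since $\chi_{j\sigma'}$ vanishes in a full angular neighborhood of $\gamma_{j\sigma}$ for $\sigma'\ne\sigma$ (the openings of the two $\eta$'s are chosen disjoint), we obtain $B_{j\sigma\mu j0}U_j=f_{j\sigma\mu}$ on $\gamma_{j\sigma}$. Third, for any $(k,s)\ne(j,0)$ the ray $\mathcal{G}_{j\sigma ks}(\gamma_{j\sigma})$ is separated by angle $\ge\delta$ from both sides of $K_k$, hence it avoids $\supp\chi_{k\sigma'}$ for $\sigma'=1,2$; consequently $U_k\equiv0$ on $\mathcal{G}_{j\sigma ks}(\gamma_{j\sigma})$ and so $(B_{j\sigma\mu ks}U_k)(\mathcal{G}_{j\sigma ks}y)=0$ on $\gamma_{j\sigma}$. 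The norm bound follows from the boundedness of multiplication by $\chi_{j\sigma}$ on $H_a^{2m}(K_j)$, which holds because $\eta_{j\sigma}$ depends only on $\omega$ and $\zeta_0$ is a smooth, bounded radial cutoff, so the weight $r^{2(a+|\alpha|-2m)}$ is not affected. I expect the main obstacle to be a clean invocation of the right-inverse of a normal system in the Kondrat'ev weighted space $H_a^{2m}(K_j)$ with support contained in a small sector around the origin; this requires separating the construction side-by-side and ensuring that the lifting of $f_{j\sigma\mu}$ can be made independent of the other side, which in turn is the content of Condition~\ref{condNormal} combined with the angular localization.
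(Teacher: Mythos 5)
The paper does not actually prove this lemma; it is imported verbatim from Lemma~8.1 of~\cite{GurRJMP04}, so your argument can only be judged on its own merits. Your overall strategy is the natural one and almost certainly close to the cited proof: lift the data of the normal system $\{B_{j\sigma\mu j0}\}_{\mu=1}^m$ separately for each side, then localize in the angular variable near $\gamma_{j\sigma}$, exploiting that every image ray $\mathcal G_{j\sigma ks}(\gamma_{j\sigma})$ with $(k,s)\ne(j,0)$ has polar angle $(-1)^\sigma\omega_j+\omega_{j\sigma ks}$ of modulus strictly less than $\omega_k$, hence stays at a positive angular distance from both sides of $K_k$. The geometric step, the vanishing of all nonlocal terms, and the boundedness of multiplication by $\chi_{j\sigma}(y)=\eta_{j\sigma}(\omega)\zeta_0(r)$ on $H_a^{2m}(K_j)$ (the correct reason being $|D^\alpha\chi_{j\sigma}|\le c\,r^{-|\alpha|}$, which exactly matches the weight shift) are all in order.

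The genuine gap is in your verification of the local condition $B_{j\sigma\mu j0}U_j=f_{j\sigma\mu}$ on $\gamma_{j\sigma}$. You argue only via $\eta_{j\sigma}\equiv1$ near the angle $(-1)^\sigma\omega_j$ and the disjointness of the angular cutoffs, but $\chi_{j\sigma}=\eta_{j\sigma}\zeta_0$ equals $1$ near $\gamma_{j\sigma}$ only for $r<\varepsilon/2$. On the transition zone $\{\varepsilon/2\le r<\varepsilon\}$ one has $f_{j\sigma\mu}=0$, whereas by Leibniz $B_{j\sigma\mu j0}(\chi_{j\sigma}V_{j\sigma})|_{\gamma_{j\sigma}}=\chi_{j\sigma}B_{j\sigma\mu j0}V_{j\sigma}|_{\gamma_{j\sigma}}+\sum c_{\beta\gamma}\,D^\gamma\chi_{j\sigma}\,D^\beta V_{j\sigma}|_{\gamma_{j\sigma}}$ with $|\beta|\le m_{j\sigma\mu}-1$, and for a lifting $V_{j\sigma}$ chosen only so that $B_{j\sigma\mu j0}V_{j\sigma}=f_{j\sigma\mu}$ these commutator traces have no reason to vanish there; so, as written, the first required identity fails on that zone. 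The repair stays inside your scheme but must be made explicit: use Condition~\ref{condNormal} not merely to assert solvability of the trace equations, but to convert $\{f_{j\sigma\mu}\}_{\mu}$ triangularly into a full Dirichlet system of normal traces $D_\nu^k V_{j\sigma}|_{\gamma_{j\sigma}}$, $k=0,\dots,m-1$ (the complementary orders up to $2m-1$ set to zero), and lift exactly these data in $H_a^{2m}(K_j)$. Since the $f_{j\sigma\mu}$ are supported in $\gamma_{j\sigma}^{\varepsilon/2}$, all prescribed traces are too, hence every $D^\beta V_{j\sigma}|_{\gamma_{j\sigma}}$ with $|\beta|\le m-1$ vanishes for $r\ge\varepsilon/2$ and all commutator terms disappear where $\zeta_0$ varies; the rest of your verification and the norm estimate then go through, with the weighted Dirichlet-system lifting on a ray (the standard Kondrat'ev-space fact you flagged) remaining the only imported ingredient.
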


\begin{lemma}\label{lHomogeneous}
Let $f_{i\mu}\in H_a^{2m-m_{i\mu}-1/2}(\Gamma_i)$. Then, for
$t\in\mathbb C$ and $|s|\le 1$, there is a function $u\in
H_a^{2m}(G)$ such that
\begin{equation}\label{eqHomogeneous_1}
 (\mathbf B_{i\mu}^0+\mathbf B_{i\mu}^1+(t+s)\mathbf
 C_{i\mu})u=f_{i\mu},
\end{equation}
\begin{equation}\label{eqHomogeneous_2}
\|u\|_{H_a^{2m}(G)}\le
c_t\sum\limits_{i,\mu}\|f_{i\mu}\|_{H_a^{2m-m_{i\mu}-1/2}(\Gamma_i)},
\end{equation}
where $c_t>0$ does not depend on $f_{i\mu}$ and $s$.
\end{lemma}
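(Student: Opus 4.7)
My plan is to construct $u = u_1 + u_2$, where $u_1 \in H_a^{2m}(G)$ is supported near $\mathcal{K}$ and handles the nonlocal conditions through the model problem, while $u_2 \in W^{2m}(G) \subset H_a^{2m}(G)$ is supported away from $\mathcal{K}$ and handles the local conditions via normality. The residual $\mathbf{C}^2$-perturbation will then be absorbed by a Fredholm-alternative argument exploiting the extra regularity afforded by Conditions~\ref{condC1}--\ref{condC2}.

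I would begin by fixing a cutoff $\eta \in C_0^\infty$ with $\supp\eta \subset \mathcal{O}_{\varepsilon/2}(\mathcal{K})$ and $\eta \equiv 1$ on $\mathcal{O}_{\varepsilon/4}(\mathcal{K})$, and splitting $f_{i\mu} = \eta f_{i\mu} + (1-\eta)f_{i\mu}$. For the near piece, transport $\eta f_{i\mu}$ into each model angle $K_j$ via $Y_j$ and apply Lemma~\ref{lAppL8.1GurRJMP04} to the combined operators $B_{j\sigma\mu ks}+(t+s)C_{j\sigma\mu ks}$ arising from $\mathbf{B}^0 + \mathbf{B}^1 + (t+s)\mathbf{C}^1$ after pullback. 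The hypotheses of Lemma~\ref{lAppL8.1GurRJMP04} are met because, by Condition~\ref{condC1} and Remark~\ref{remOneAndTheSameL}, the principal parts of $C_{j\sigma\mu ks}$ vanish at the origin, so the diagonal operator $B_{j\sigma\mu j0}+(t+s)C_{j\sigma\mu j0}$ remains a normal boundary operator. Pulling back via $Y_j^{-1}$ and gluing with a partition of unity yields $u_1$ supported in $\mathcal{O}_\varepsilon(\mathcal{K}) \subset \mathcal{O}_{\varkappa_1}(\mathcal{K})$ satisfying $(\mathbf{B}_{i\mu}^0+\mathbf{B}_{i\mu}^1+(t+s)\mathbf{C}_{i\mu}^1) u_1 = \eta f_{i\mu}$ on $\Gamma_i \cap \mathcal{O}_{\varepsilon_0}(\mathcal{K})$, with $\mathbf{C}_{i\mu}^2 u_1 = 0$ by Condition~\ref{condSeparK23} applied to $\mathbf{C}^2$. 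For the away piece, Condition~\ref{condNormal} together with the classical Lions--Magenes trace lifting furnishes $u_2 \in W^{2m}(G)$ supported in $G \setminus \overline{\mathcal{O}_{\varepsilon_0/2}(\mathcal{K})}$, realizing the remaining boundary data $(1-\eta)f_{i\mu} - \mathbf{B}_{i\mu}^0 u_1|_{\Gamma_i \setminus \mathcal{O}_{\varepsilon_0}(\mathcal{K})}$. Since $\supp u_2$ avoids $\supp\zeta$, $\mathbf{B}_{i\mu}^1 u_2 = \mathbf{C}_{i\mu}^1 u_2 = 0$, so $u_0 = u_1 + u_2$ satisfies the full nonlocal identity modulo the discrepancy $(t+s)\mathbf{C}_{i\mu}^2 u_2$.

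By Lemma~\ref{lCinHa} (using Conditions~\ref{condC1}--\ref{condC2}) together with the Rellich--Kondrachov compact embedding on $G \setminus \overline{\mathcal{O}_{\varkappa_1}(\mathcal{K})}$, the map $u \mapsto \{\mathbf{C}_{i\mu}^2 u\}$ is compact from $H_a^{2m}(G)$ into $\prod_{i,\mu} H_a^{2m-m_{i\mu}-1/2}(\Gamma_i)$. Hence the parametrix $R\colon f \mapsto u_0$ satisfies $\{(\mathbf{B}_{i\mu}^0+\mathbf{B}_{i\mu}^1+(t+s)\mathbf{C}_{i\mu})\}\circ R = I + T$ with $T$ compact, and the Fredholm alternative reduces \eqref{eqHomogeneous_1}--\eqref{eqHomogeneous_2} to eliminating the finite-dimensional cokernel of $I + T$. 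This I would handle by exploiting the infinite-dimensional freedom $\ker \mathbf{B}_{i\mu}^0 \subset W^{2m}(G \setminus \overline{\mathcal{O}_{\varepsilon_0/2}(\mathcal{K})})$ available by normality: adjust $u_2$ by suitable kernel elements to cancel the cokernel. The final estimate then follows from the open mapping theorem, with $c_t$ uniform in $|s| \le 1$ because the entire parametrix depends continuously on $s$ over a compact disk and all constants in Lemma~\ref{lAppL8.1GurRJMP04} vary continuously in the coefficients. The main obstacle I foresee is this last cokernel-elimination step: choosing the kernel adjustment rigorously while retaining the uniformity in $s$, which is precisely where Condition~\ref{condNormal} (supplying the infinite-dimensional kernel) and the combination of Conditions~\ref{condC1}--\ref{condC2} (providing the compactness of the residue) enter essentially.
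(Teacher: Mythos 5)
Your scheme breaks down at the point you yourself flag as the main obstacle, and the failure is not merely technical. The Fredholm-alternative step rests on the claim that $u\mapsto\{\mathbf C_{i\mu}^2u\}$ is compact from $H_a^{2m}(G)$ into $\prod_{i,\mu}H_a^{2m-m_{i\mu}-1/2}(\Gamma_i)$, but nothing in the hypotheses yields compactness: Condition~\ref{condSeparK23} (applied to $\mathbf C^2$) gives only the boundedness estimate~\eqref{eqSeparK23'} with no gain of smoothness, and the passage from $W^{2m-m_{i\mu}-1/2}(\Gamma_i)$ with the vanishing conditions of Condition~\ref{condC2} into the weighted trace space (Lemmas~\ref{lPsig=0} and~\ref{lCinHa}) improves the weight exponent, not the differentiability order, whereas compact embeddings of Kondrat'ev spaces (Lemma~\ref{lAppALem3.5Kondr}) require a strict gain in the order $k$; on any annulus away from $\mathcal K$ the weighted and unweighted norms are equivalent, so the embedding with the same $k$ and a larger weight is not compact. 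Hence your operator $T$ need not be compact, the Fredholm alternative does not apply, and the subsequent cokernel-elimination by elements of $\ker\mathbf B_{i\mu}^0$, together with the uniformity of the constant in $s$ over the disk, is left without a mechanism. A secondary gap: you assert $\supp u_1\subset\mathcal O_\varepsilon(\mathcal K)\subset\mathcal O_{\varkappa_1}(\mathcal K)$ to conclude $\mathbf C_{i\mu}^2u_1=0$, but no relation between $\varepsilon$ and $\varkappa_1$ is assumed; and in any case the discrepancy $\mathbf C_{i\mu}^2u_2$ from the far piece is generically nonzero, so an exact correction is still needed.

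The paper's proof shows that no compactness or Fredholm theory is required: the correction terminates exactly after two steps because of the two support properties of $\mathbf C^2$. First, Lemma~\ref{lAppL8.1GurRJMP04} together with a partition of unity produces $v\in H_a^{2m}(G)$ with $\supp v\subset\overline G\setminus\overline{G_\rho}$, $\mathbf B_{i\mu}^0v=f_{i\mu}$ and, crucially, $\mathbf B_{i\mu}^1v=\mathbf C_{i\mu}^1v=0$ (the lemma lets you prescribe the diagonal local datum while annihilating \emph{all} nonlocal terms, so no $t+s$-dependence enters and no separate treatment of a ``perturbed'' model system is needed). The only discrepancy is $(t+s)\mathbf C_{i\mu}^2v$; by~\eqref{eqSeparK23''} (for $\mathbf C^2$) its support lies in $\mathcal O_{\varkappa_2}(\mathcal K)$, and by Lemma~\ref{lCinHa} it belongs to $H_a^{2m-m_{i\mu}-1/2}(\Gamma_i)$. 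A second application of Lemma~\ref{lAppL8.1GurRJMP04} then yields $w$ with $\supp w\subset\mathcal O_{\varkappa_1}(\mathcal K)$, $\mathbf B_{i\mu}^0w=-(t+s)\mathbf C_{i\mu}^2v$, $\mathbf B_{i\mu}^1w=\mathbf C_{i\mu}^1w=0$; since $\supp w\subset\mathcal O_{\varkappa_1}(\mathcal K)$, inequality~\eqref{eqSeparK23'} forces $\mathbf C_{i\mu}^2w=0$, so $u=v+w$ satisfies~\eqref{eqHomogeneous_1} exactly, and~\eqref{eqHomogeneous_2} holds with $c_t$ independent of $s$ because $|t+s|\le|t|+1$. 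You should replace the parametrix/Fredholm part of your argument by this second exact correction.
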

\begin{proof}
Using Lemma~\ref{lAppL8.1GurRJMP04} and a partition of unity, we
construct a function $v\in H_a^{2m}(G)$ such that
\begin{equation}\label{eqHomogeneous_A}
\supp v\subset \overline{G}\setminus \overline{G_\rho},
\end{equation}
\begin{equation}\label{eqHomogeneous_B}
\mathbf B_{i\mu}^0v=f_{i\mu},\qquad \mathbf B_{i\mu}^1v=0,\qquad
\mathbf C_{i\mu}^1v=0,
\end{equation}
\begin{equation}\label{eqHomogeneous_C}
\|v\|_{H_a^{2m}(G)}\le
k_1\sum\limits_{i,\mu}\|f_{i\mu}\|_{H_a^{2m-m_{i\mu}-1/2}(\Gamma_i)},
\end{equation}
where $k_1>0$ does not depend on $f_{i\mu}$, $t$, and $s$.

By~\eqref{eqHomogeneous_A} and~\eqref{eqSeparK23''}, we have
$\supp \mathbf C_{i\mu}^2v\subset\mathcal O_{\varkappa_2}(\mathcal
K)$. Moreover, by Lemma~\ref{lCinHa}, $\mathbf C_{i\mu}^2v\in
H_a^{2m-m_{i\mu}-1/2}(\Gamma_i)$. Therefore, using
Lemma~\ref{lAppL8.1GurRJMP04} and a partition of unity again, we
construct a function $w\in H_a^{2m}(G)$ such that
\begin{equation}\label{eqHomogeneous_D}
\supp w\subset \mathcal O_{\varkappa_1}(\mathcal K),
\end{equation}
\begin{equation}\label{eqHomogeneous_E}
\mathbf B_{i\mu}^0w=-(t+s)\mathbf C_{i\mu}^2v,\qquad \mathbf
B_{i\mu}^1w=0,\qquad \mathbf C_{i\mu}^1w=0,
\end{equation}
$$
\|w\|_{H_a^{2m}(G)} \le k_1\sum\limits_{i,\mu}\|(t+s)\mathbf
C_{i\mu}^2v\|_{H_a^{2m-m_{i\mu}-1/2}(\Gamma_i)}.
$$
Using the relation $|s|\le1$ and inequalities~\eqref{lCinHa_2}
and~\eqref{eqHomogeneous_C}, we infer from the last inequality
\begin{equation}\label{eqHomogeneous_F}
\|w\|_{H_a^{2m}(G)} \le k_1\sum\limits_{i,\mu}(|t|+1)\|\mathbf
C_{i\mu}^2v\|_{H_a^{2m-m_{i\mu}-1/2}(\Gamma_i)}\le
k_2\|v\|_{H_a^{2m}(G)}\le
k_2k_1\sum\limits_{i,\mu}\|f_{i\mu}\|_{H_a^{2m-m_{i\mu}-1/2}(\Gamma_i)},
\end{equation}
where $k_2>0$ may depend on $t$ but does not depend on $f_{i\mu}$
and $s$.

By~\eqref{eqHomogeneous_D} and~\eqref{lCinHa_2}, we have $\mathbf
C_{i\mu}^2w=0$. It follows from this relation,
from~\eqref{eqHomogeneous_B}, and from~\eqref{eqHomogeneous_E}
that $u=v+w$ satisfies~\eqref{eqHomogeneous_1}.
Inequality~\eqref{eqHomogeneous_2} follows from
inequalities~\eqref{eqHomogeneous_C} and~\eqref{eqHomogeneous_F}.
\end{proof}

\begin{remark}\label{remHomogeneous}
One can easily see that, if $(\mathbf C_{i\mu}^2v)(y)=0$ in
$\mathcal O_\varkappa(\mathcal K)$ for some $\varkappa>0$ and for
any $v\in W^{2m}(G\setminus\overline{\mathcal
O_{\varkappa_1}(\mathcal K)})$, then Lemma~\ref{lHomogeneous} is
true for any $a\in\mathbb R$.
\end{remark}

\begin{proof}[Proof of Theorem~{\rm \ref{thSmallGap}}]
1. We have to prove inequality~\eqref{eqSmallGap_1} for the
quantity $\hat \delta(\mathbf Q_t,\mathbf Q_{t+s})$ replaced by
$\delta(\mathbf Q_t,\mathbf Q_{t+s})$ and $\delta(\mathbf
Q_{t+s},\mathbf Q_{t})$. Let us prove the inequality
\begin{equation}\label{lSmallGapTT+S22}
\delta(\mathbf Q_t,\mathbf Q_{t+s})\le c_t|s|,\quad |s|\le s_t.
\end{equation}
(The proof of the corresponding inequality for $\delta(\mathbf
Q_{t+s}, \mathbf Q_t)$ can be carried out in a similar way.)

Fix an arbitrary number $t$ and take a function $u\in\Dom(\mathbf
Q_t)$. According to the definition~\eqref{eqDeltaTT+S}, it
suffices to find a function $v_s\in\Dom(\mathbf Q_{t+s})$ (which
depends on $u$) such that
\begin{equation}\label{lSmallGapTT+S23}
\|u-v_s\|_{L_2(G)}+\|\mathbf A u-\mathbf Av_s\|_{H_a^0(G)}\le
k_1|s|\cdot\|(u,\mathbf Au)\|,
\end{equation}
where $|s|$ is sufficiently small and ${k_1,k_2,\dots}>0$ may
depend on $t$ but do not depend on $u$ and $s$.

Let us search $v_s\in\Dom(\mathbf Q_{t+s})$ in the form
\begin{equation}\label{lSmallGapTT+S24}
v_s=u+w_s,
\end{equation}
where $w_s\in H_a^{2m}(G)$ is a solution of the problem
\begin{equation}\label{lSmallGapTT+S25}
\mathbf Aw_s=\sum\limits_{j=1}^{J_s}\beta_j^s f_j^s,\qquad
(\mathbf B_{i\mu}^0+\mathbf B_{i\mu}^1+(t+s)\mathbf
C_{i\mu})w_s=-s\mathbf C_{i\mu}u;
\end{equation}
the numbers $J_s$ and $\beta_j^s$ as well as the functions
$f_j^s\in H_a^0(G)$ will be defined later in such a way that the
solution $w_s\in H_a^{2m}(G)$ exists.

2. To solve problem~\eqref{lSmallGapTT+S25}, we first note that
$\mathbf C_{i\mu}u\in H_a^{2m-m_{i\mu}-1/2}(\Gamma_i)$ due to
Corollary~\ref{corAprEstCimu}. Hence, we can apply
Lemma~\ref{lHomogeneous} and construct a function $W_s\in
H_a^{2m}(G)$ such that
\begin{equation}\label{lSmallGapTT+S27}
(\mathbf B_{i\mu}^0+\mathbf B_{i\mu}^1+(t+s)\mathbf
C_{i\mu})W_s=-s\mathbf C_{i\mu}u,
\end{equation}
\begin{equation}\label{lSmallGapTT+S28}
\|W_s\|_{H_a^{2m}(G)}\le k_2|s| \sum\limits_{i,\mu}\|\mathbf
C_{i\mu}u\|_{H_a^{2m-m_{i\mu}-1/2}(\Gamma_i)}.
\end{equation}
Combining~\eqref{lSmallGapTT+S28} with~\eqref{eqAprEstCimu}, we
obtain
\begin{equation}\label{lSmallGapTT+S29'}
\|W_s\|_{H_a^{2m}(G)}\le k_3|s|\cdot\|(u,\mathbf Au)\|.
\end{equation}
Clearly, problem~\eqref{lSmallGapTT+S25} is equivalent to the
following one:
\begin{equation}\label{lSmallGapTT+S29}
\mathbf AY_s=-\mathbf AW_s+\sum\limits_{j=1}^{J_s}\beta_j^s
f_j^s,\qquad (\mathbf B_{i\mu}^0+\mathbf B_{i\mu}^1+(t+s)\mathbf
C_{i\mu})Y_s=0,
\end{equation}
where
\begin{equation}\label{lSmallGapTT+S31}
Y_s=w_s-W_s\in H_a^{2m}(G).
\end{equation}

3. To solve problem~\eqref{lSmallGapTT+S29}, we consider the
bounded operator
\begin{equation}\label{eqLt}
\mathbf L_t=\{{\bf A},\mathbf B_{i\mu}^0+\mathbf
B_{i\mu}^1+t\mathbf C_{i\mu}\}: H_a^{2m}(G)\to \mathcal
H_a^0(G,\Gamma).
\end{equation}
Note that $\mathbf C_{i\mu}^2 v\in
H_a^{2m-m_{i\mu}-1/2}(\Gamma_i)$ for any $v\in H_a^{2m}(G)$ due to
Lemma~\ref{lCinHa}; for this reason, we can write $\mathcal
H_a^0(G,\Gamma)$ instead of $\mathcal H_a^0(G,\Gamma)\dotplus
\mathcal R_a^0(G,\Gamma)$ in the definition of the operator
$\mathbf L_t$ (cf.~\eqref{eqOpL}). It follows from Theorem~6.1
in~\cite{GurRJMP04} and from Remark~\ref{remOneAndTheSameL} that
the operator $\mathbf L_t$ has the Fredholm property for any
$t\in\mathbb C$.

Expand the space $H_a^{2m}(G)$ into the orthogonal sum $
H_a^{2m}(G)=\ker\mathbf L_t\oplus E_t, $ where $E_t$ is a closed
subspace in $H_a^{2m}(G)$. Clearly, the operator
\begin{equation}\label{eqLt'}
\mathbf L_t'=\{{\bf A},\mathbf B_{i\mu}^0+\mathbf
B_{i\mu}^1+t\mathbf C_{i\mu}\}: E_t\to \mathcal H_a^0(G,\Gamma)
\end{equation}
has the Fredholm property and its kernel is trivial. In
particular, this means that
\begin{equation}\label{lSmallGapTT+S32}
\|u\|_{H_a^{2m}(G)}\le k_4\|\mathbf L_t'u\|_{\mathcal
H_a^0(G,\Gamma)}\quad \forall u\in E_t.
\end{equation}

Let $ J=\codim\mathcal R(\mathbf L_t'). $ It follows from
Lemma~\ref{lCinHa} and from Theorem~\ref{thAppBTheorSec16Kr} that
the operator
$$
\mathbf L_{ts}'=\{{\bf A},\mathbf B_{i\mu}^0+\mathbf
B_{i\mu}^1+(t+s)\mathbf C_{i\mu}\}: E_t\to \mathcal
H_a^0(G,\Gamma)
$$
also has the Fredholm property, its kernel is trivial and
$\codim\mathcal R(\mathbf L_{ts}')=J$, provided that $|s|\le s_t$,
where $s_t>0$ is sufficiently small. Moreover, using
estimates~\eqref{lSmallGapTT+S32}, \eqref{lCinHa_1},
and~\eqref{lCinHa_2}, we have, for all $u\in E_t$,
$$
\|u\|_{H_a^{2m}(G)}\le k_4\left(\|\mathbf L_{ts}'u\|_{\mathcal
H_a^0(G,\Gamma)}+s_t\sum\limits_{i,\mu}\|\mathbf
C_{i\mu}u\|_{H_a^{2m-m_{i\mu}-1/2}(\Gamma_i)}\right)\le k_5
\left(\|\mathbf L_{ts}'u\|_{\mathcal
H_a^0(G,\Gamma)}+s_t\|u\|_{H_a^{2m}(G)}\right).
$$
Taking $s_t\le 1/(2k_6)$, we obtain
\begin{equation}\label{lSmallGapTT+S33}
\|u\|_{H_a^{2m}(G)}\le k_6\|\mathbf L_{ts}'u\|_{\mathcal
H_a^0(G,\Gamma)}\quad \forall u\in E_t.
\end{equation}

Since $\mathbf L_{ts}'$ has the Fredholm property, the set $\{f\in
H_a^0(G):\ (f,0)\in\mathcal R(\mathbf L_{ts}')\}$ is closed and is
of finite codimension $J_s$ in $H_a^0(G)$. It is easy to see that
$ J_s\le J. $

Let $f_1^s,\dots,f_{J_s}^s$ be an orthogonal normalized basis for
the space
$$
H_a^0(G)\ominus\{f\in H_a^0(G):\ (f,0)\in\mathcal R(\mathbf
L_{ts}')\}.
$$

Set $\beta_j^s=(\mathbf A W_s,f_j^s)_{H_a^0(G)}$. In this case,
problem~\eqref{lSmallGapTT+S29} admits a unique solution $Y_s\in
E_t$, and, by virtue of~\eqref{lSmallGapTT+S33}
and~\eqref{lSmallGapTT+S29'}, we have
\begin{equation}\label{lSmallGapTT+S35}
\|Y_s\|_{H_a^{2m}(G)}\le k_6\left(\|\mathbf A
W_s\|_{H_a^0(G)}+\sum\limits_{j=1}^{J_s}|\beta_{j}^s|\right)\le
k_7|s|\cdot\|(u,\mathbf Au)\|+k_6
J\max\{\beta_1^s,\dots,\beta_{J_s}^s\}.
\end{equation}

Estimating $\beta_{j}^s=(\mathbf A W_s,f_j^s)_{H_a^0(G)}$ by
Schwarz' inequality and using~\eqref{lSmallGapTT+S29'}, we obtain
$$
|\beta_{j}^s|\le\|\mathbf A W_s\|_{H_a^0(G)}\le
k_8|s|\cdot\|(u,\mathbf Au)\|.
$$
Combining this inequality with~\eqref{lSmallGapTT+S35} yields
\begin{equation}\label{lSmallGapTT+S36}
\|Y_s\|_{H_a^{2m}(G)}\le k_9|s|\cdot\|(u,\mathbf Au)\|.
\end{equation}

4. Taking into account equality~\eqref{lSmallGapTT+S31}, we deduce
from estimates~\eqref{lSmallGapTT+S29'}
and~\eqref{lSmallGapTT+S36}
\begin{equation}\label{lSmallGapTT+S37}
\|w_s\|_{L_2(G)}\le k_{10}\|w_s\|_{H_a^{2m}(G)}\le k_{11}|s|\cdot
\|(u,\mathbf Au)\|,
\end{equation}
\begin{equation}\label{lSmallGapTT+S38}
\|\mathbf A w_s\|_{H_a^{0}(G)}\le k_{12}\|w_s\|_{H_a^{2m}(G)}\le
k_{12}k_{11}|s|\cdot \|(u,\mathbf Au)\|,
\end{equation}
where $w_s=Y_s+W_s$ is a solution of
problem~\eqref{lSmallGapTT+S25}.

It follows from the boundedness of the embedding
$H_a^{2m}(G)\subset W^m(G)$ that the function $v_s$ defined
by~\eqref{lSmallGapTT+S24} belongs to $W^m(G)$, and
$v_s\in\Dom(\mathbf Q_{t+s})$ due to the second relation
in~\eqref{lSmallGapTT+S25}. The desired
inequality~\eqref{lSmallGapTT+S23} follows
from~\eqref{lSmallGapTT+S24}, \eqref{lSmallGapTT+S37},
and~\eqref{lSmallGapTT+S38}.
\end{proof}

\begin{proof}[Proof of Theorem~{\rm \ref{thIndP0=IndP1}}]
It follows from Lemma~2.1 in~\cite{SkDu90} that the spectrum of
$\tilde{\mathcal L}(\lambda)$ is discrete. Therefore, one can find
a number $a$ such that $m-1<a<m$ and the lines $\Im\lambda=a+1-2m$
and $\Im\lambda=a+1-m$ contain no eigenvalues of $\tilde{\mathcal
L}(\lambda)$. Fix two arbitrary numbers $t_1,t_2\in\mathbb C$. By
Lemma~\ref{lIndQ=IndP} and Remark~\ref{remOneAndTheSameL}, the
operators $\mathbf Q_t$ have the Fredholm property for all $t$ in
the interval $I_{t_1t_2}\subset \mathbb C$ with the end points
$t_1,t_2$. Covering each point of the interval $I_{t_1t_2}$ by a
disk of sufficiently small radius, choosing a finite subcovering
of $I_{t_1t_2}$, and applying Theorems~\ref{thSmallGap}
and~\ref{thAppBTheor5.17Kato}, we see that $\ind \mathbf
Q_{t_1}=\ind \mathbf Q_{t_2}$. It follows from this fact and from
Lemma~\ref{lIndQ=IndP} that $\ind \mathbf P_{t_1}=\ind \mathbf
P_{t_2}$.
\end{proof}

\begin{remark}\label{remManyOrbits}
Theorems~\ref{thIndP=IndP'} and~\ref{thIndP0=IndP1} remain true in
the case where the set $\mathcal K$ consists of finitely many
disjoint orbits. The proofs need evident modifications.
\end{remark}

\appendix

\section{Appendix}

\subsection{Some Properties of Sobolev and Weighted
Spaces}\label{secPropSobSpace}

Let $G$ and $\Gamma_i$ be the same as in Sec.~\ref{secSetting}.

\begin{lemma}[see Lemma~3.5 in~\cite{KondrTMMO67}]\label{lAppALem3.5Kondr}
Let $k_2>k_1$ and $k_2-a_2> k_1-a_1$. Then the space
$H_{a_2}^{k_2}(G)$ is compactly embedded into $H_{a_1}^{k_1}(G)$.
\end{lemma}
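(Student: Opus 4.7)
Since the weight $\rho(y)=\dist(y,\mathcal K)$ is bounded away from zero off every neighborhood of $\mathcal K$, my plan is to split the problem by a $C^\infty$ partition of unity $\{\chi_0,\chi_1,\dots,\chi_N\}$ subordinate to $G$, with $\supp\chi_0\Subset G\setminus\mathcal K$ and $\chi_j$ supported in a small neighborhood of $g_j\in\mathcal K$ for $j=1,\dots,N$. Off $\mathcal K$ the Kondrat'ev weights $\rho^{2(a_i+|\alpha|-k_i)}$ are bounded above and below by positive constants, so on $\supp\chi_0$ the norms of $H_{a_2}^{k_2}$ and $H_{a_1}^{k_1}$ are equivalent to the ordinary Sobolev norms $W^{k_2}$ and $W^{k_1}$, and the classical Rellich--Kondrachov theorem supplies the compact embedding of that localized piece. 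The work therefore reduces to proving the compactness of the multiplication-by-$\chi_j$ map from $H_{a_2}^{k_2}$ into $H_{a_1}^{k_1}$ for each $j$, so in a neighborhood of a single fixed vertex we may assume $g_j=0$ and work in polar coordinates.

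Near $0$ I would decompose into dyadic annuli $K_n=G\cap\{2^{-n-1}<r<2^{-n}\}$, $n=0,1,2,\dots$, and perform the standard Kondrat'ev rescaling $y\mapsto y'=2^n y$, $u(y)=U_n(y')$. A direct calculation, using $\rho\asymp 2^{-n}$ on $K_n$, $dy=2^{-2n}dy'$ and $D_y^\alpha u=2^{n|\alpha|}D_{y'}^\alpha U_n$, gives the scaling identity
\begin{equation*}
\|u\|_{H_{a}^{k}(K_n)}^2\;\asymp\;2^{-2n(a-k+1)}\,\|U_n\|_{W^{k}(K_0)}^2,
\end{equation*}
where $K_0$ is a fixed reference annulus (the image of $K_n$ under the rescaling, intersected with the local plane angle). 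Hence
\begin{equation*}
\|u\|_{H_{a_1}^{k_1}(G)}^2\asymp\sum_{n\ge 0}2^{-2n(a_1-k_1+1)}\|U_n\|_{W^{k_1}(K_0)}^2,\qquad
\|u\|_{H_{a_2}^{k_2}(G)}^2\asymp\sum_{n\ge 0}2^{-2n(a_2-k_2+1)}\|U_n\|_{W^{k_2}(K_0)}^2,
\end{equation*}
so the assumption $k_2-a_2>k_1-a_1$ means precisely that the weights in the $n$-sum decay strictly faster for $H_{a_1}^{k_1}$ than for $H_{a_2}^{k_2}$.

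With this in hand I would prove compactness by a truncation plus diagonalization argument. Fix a bounded sequence $\{u_m\}\subset H_{a_2}^{k_2}(G)$ with rescaled pieces $U_{m,n}$. For the tail $n\ge N$, using $\|U_{m,n}\|_{W^{k_1}(K_0)}\le\|U_{m,n}\|_{W^{k_2}(K_0)}$ (here $k_2>k_1$) I estimate
\begin{equation*}
\sum_{n\ge N}2^{-2n(a_1-k_1+1)}\|U_{m,n}\|_{W^{k_1}(K_0)}^2
\le 2^{-2N\bigl((k_2-a_2)-(k_1-a_1)\bigr)}\,\|u_m\|_{H_{a_2}^{k_2}(G)}^2,
\end{equation*}
which tends to $0$ as $N\to\infty$ uniformly in $m$; this is the key step where the hypothesis $k_2-a_2>k_1-a_1$ is used. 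For the head $n<N$ (finitely many annuli), the $U_{m,n}$ are bounded in $W^{k_2}(K_0)$ and the classical Rellich--Kondrachov embedding $W^{k_2}(K_0)\Subset W^{k_1}(K_0)$ supplies, after a standard diagonal extraction, a subsequence converging in $W^{k_1}(K_0)$ on each $K_0$-piece. Combining these two observations in the usual $\varepsilon/N$ fashion produces a Cauchy subsequence in $H_{a_1}^{k_1}(G)$.

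The only delicate point is bookkeeping the rescaling: one must verify that the sum of local norms over dyadic annuli really is equivalent to the full weighted norm (which is a standard exercise once a fixed overlap in the annuli is allowed), and that the cutoffs $\chi_j$ do not spoil the estimates (which reduces to the obvious multiplier estimate $\|\chi_j u\|_{H_a^k}\le C\|u\|_{H_a^k}$). Apart from this, the decisive ingredient is the strict inequality $k_2-a_2>k_1-a_1$, without which the geometric series in the tail estimate would no longer converge and the embedding would at best be continuous, not compact.
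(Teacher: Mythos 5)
Your argument is correct, but note that the paper itself does not prove this statement at all: it is imported verbatim as Lemma~3.5 of Kondrat'ev's paper \cite{KondrTMMO67}, so there is no in-paper proof to compare against. What you have written is essentially the classical Kondrat'ev-style proof reconstructed from scratch: localize by a partition of unity (the interior piece being handled by Rellich--Kondrachov, since the weights are bounded above and below away from $\mathcal K$), then near a vertex decompose into dyadic annuli, rescale each annulus to a fixed reference annulus, and read off the scaling factor $2^{-2n(a-k+1)}$, which is correct and, importantly, independent of $\alpha$. Your tail estimate is exactly where the hypothesis $k_2-a_2>k_1-a_1$ enters, giving uniform (in $m$) smallness of the contribution of $n\ge N$, and the head is handled by Rellich on the fixed reference annulus plus a diagonal extraction; the combination gives a Cauchy subsequence in $H_{a_1}^{k_1}(G)$. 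Two bookkeeping points you flag are indeed harmless: the dyadic annuli $\{2^{-n-1}<r<2^{-n}\}$ are pairwise disjoint, so the sum of local norms equals (not merely compares with) the weighted norm and no overlap argument is needed; and the multiplier bound $\|\chi_j u\|_{H_a^k}\le C\|u\|_{H_a^k}$ holds because each derivative falling on $\chi_j$ lowers $|\beta|$ and the surplus weight $\rho^{2(|\alpha|-|\beta|)}$ is bounded on the bounded domain $G$. So the proposal is a complete and self-contained substitute for the cited lemma; the only trade-off is length, since the paper simply delegates to the literature.
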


Fix an arbitrary index $i$ and set $\Gamma=\Gamma_i$. Let
$g\in\overline{\Gamma}\setminus\Gamma$. We assume throughout this
section, without loss of generality, that $g=0$ and $\Gamma$
coincides with the axis $Oy_1$ in a sufficiently small
neighborhood $\mathcal O_\varepsilon(0)$ of the origin. In this
section, we use the notation
$$
G^\varepsilon=G\cap\mathcal O_\varepsilon(0),\qquad
\Gamma^\varepsilon=\Gamma\cap\mathcal O_\varepsilon(0),
$$
in which case $H_a^k(G^\varepsilon)=H_a^k(G^\varepsilon,\{0\})$.

\begin{lemma}\label{lu=P+v}
If $u\in W^k(G^\varepsilon)$, $k\ge1$, then the following
assertions are true{\rm :}
\begin{enumerate}
\item
$u(y)=P(y)+v(y)$ for $y\in G^\varepsilon,$ where $
P(y)=\sum\limits_{|\alpha|\le k-2}p_\alpha y^\alpha$, $v\in
W^k(G^\varepsilon)\cap H_\delta^k(G^\varepsilon)$ $\forall\delta>0
$ {\rm(}if $k=1$, we set $P(y)\equiv0${\rm);} in particular, $u\in
H_{k-1+\delta}^{k}(G^\varepsilon)${\rm;}
\item
$D^\alpha u|_{y=0}=D^\alpha P|_{y=0}$ for $|\alpha|\le k-2;$
\item $
\sum\limits_{|\alpha|\le
k-2}|p_\alpha|+\|v\|_{H_\delta^k(G^\varepsilon)}\le
c_\delta\|u\|_{W^k(G^\varepsilon)}, $ where $c_\delta>0$ does not
depend on $u$.
\end{enumerate}
\end{lemma}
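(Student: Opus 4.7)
The plan is to take $P$ to be the degree-$(k-2)$ Taylor polynomial of $u$ at the origin and to verify the weighted-space regularity of the remainder $v = u - P$ via one-dimensional Hardy inequalities applied in the radial variable.

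First I would handle the setup. For $k \ge 2$, the two-dimensional Sobolev embedding $W^k(G^\varepsilon)\hookrightarrow C^{k-2}(\overline{G^\varepsilon})$ makes it possible to define $p_\alpha = D^\alpha u(0)/\alpha!$ for $|\alpha|\le k-2$ and set $P(y)=\sum_{|\alpha|\le k-2}p_\alpha y^\alpha$; for $k=1$ one sets $P\equiv 0$ and the first two assertions become trivial. Assertion~(2) then holds by construction of the Taylor polynomial, and the Sobolev embedding gives at once $\sum|p_\alpha|\le c\,\|u\|_{W^k(G^\varepsilon)}$, whence $v=u-P$ lies in $W^k(G^\varepsilon)$ with $\|v\|_{W^k(G^\varepsilon)}\le c\|u\|_{W^k(G^\varepsilon)}$.

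Next I would prove the weighted estimate. Fix $\delta\in(0,1)$; the task is to bound each summand of
\[
\|v\|_{H_\delta^k(G^\varepsilon)}^2 \;=\; \sum_{|\alpha|\le k}\int_{G^\varepsilon} r^{2(\delta+|\alpha|-k)}\,|D^\alpha v|^2\,dy.
\]
When $|\alpha|=k$ the weight $r^{2\delta}$ is bounded, so the estimate is trivial. When $|\alpha|=k-1$, I would apply a one-dimensional Hardy inequality in the radial direction to $w=D^\alpha v\in W^1(G^\varepsilon)$: writing $w(r,\omega)=w(\varepsilon,\omega)-\int_r^\varepsilon \partial_s w(s,\omega)\,ds$, using Cauchy--Schwarz with the weight $s$, and integrating $r^{2\delta-1}\,dr$ over $(0,\varepsilon)$, one obtains
\[
\int_{G^\varepsilon} r^{2\delta-2}|w|^2\,dy \;\le\; c_\delta\Bigl(\|w\|_{W^1(G^\varepsilon)}^2+\|w\|_{L_2(\partial G^\varepsilon\cap\{r=\varepsilon\})}^2\Bigr) \;\le\; c_\delta'\,\|u\|_{W^k(G^\varepsilon)}^2,
\]
the factor $r^{2\delta}$ absorbing the logarithmic divergence that appears at the borderline. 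When $|\alpha|\le k-2$ the vanishing $D^\alpha v(0)=0$ becomes available, and I would iterate the estimate: using $D^\alpha v(r,\omega)=\int_0^r\partial_s D^\alpha v(s,\omega)\,ds$ and Cauchy--Schwarz gains one factor of $s^{2\delta-1}$ after integration in $r$, reducing an estimate at weight $r^{2(\delta+|\alpha|-k)}$ for $D^\alpha v$ to the analogous estimate at weight $r^{2(\delta+|\alpha|+1-k)}$ for $\partial_r D^\alpha v$. After at most $k-|\alpha|$ such reductions the right-hand side is controlled by $\|u\|_{W^k(G^\varepsilon)}^2$. Summing over $\alpha$ delivers assertion~(3), and the finiteness of each summand delivers assertion~(1).

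The main obstacle is the logarithmic borderline of Hardy's inequality in two dimensions: the sharp bound $\int r^{-2}|w|^2\,dy\le c\int|\nabla w|^2\,dy$ is false on $\mathbb R^2$ even for $w$ that vanish at the origin. This is precisely why the lemma is formulated for arbitrary $\delta>0$ rather than $\delta=0$; the slack $r^{2\delta}$ suffices to beat the logarithm. Each invocation of Hardy must therefore retain this slack, with the constant $c_\delta$ blowing up as $\delta\to 0^+$ but remaining finite for every fixed $\delta>0$, which is exactly the dependence recorded in~(3).
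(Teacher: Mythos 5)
Your proposal is correct in substance, but note that the paper itself gives no argument here at all: the lemma is dispatched by citing Lemmas~4.9 and~4.11 of Kondrat'ev's paper~\cite{KondrTMMO67} (for $k=1$ and $k\ge2$, respectively), so what you have produced is a self-contained reconstruction of that classical result rather than an alternative to an argument in the text. Your route — subtract the degree-$(k-2)$ Taylor polynomial, which is well defined because $W^k(G^\varepsilon)\hookrightarrow C^{k-2,\gamma}(\overline{G^\varepsilon})$ for $\gamma<1$ in two dimensions, treat $|\alpha|=k$ trivially, treat $|\alpha|=k-1$ by a radial Hardy-type estimate in which the $r^{2\delta}$ slack absorbs the logarithm, and use the vanishing $D^\alpha v(0)=0$ for $|\alpha|\le k-2$ — is exactly the standard proof behind Kondrat'ev's lemmas, and it goes through, with two points you should make explicit. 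First, in the iterated step the plain Cauchy--Schwarz reduction as you describe it lands precisely on the borderline $\int_0^\varepsilon r^{-1}\,dr$ and therefore leaves a factor $\ln(\varepsilon/s)$ on the right-hand side; this is harmless, but you must either absorb it by deducing the level-$|\alpha|$ estimate with parameter $\delta$ from the level-$(|\alpha|+1)$ estimate with parameter $\delta/2$ (there are only finitely many levels, and the assertion is claimed for every $\delta>0$), or use the sharp integration-by-parts form of Hardy's inequality, whose boundary term at $r=0$ is controlled by $D^\alpha v(0)=0$ together with the finiteness of the next-level integral. Second, for $|\alpha|\le k-2$ the iteration can be bypassed entirely: the Taylor--H\"older bound $|D^\alpha v(y)|\le c\,\|u\|_{W^k(G^\varepsilon)}\,|y|^{\,k-2-|\alpha|+\gamma}$ with $1-\delta<\gamma<1$ makes each of those weighted integrals convergent at once, which also supplies the decay needed to kill the Hardy boundary term. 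Finally, the clause ``in particular $u\in H^{k}_{k-1+\delta}(G^\varepsilon)$'' requires the (immediate) remark that every polynomial of degree at most $k-2$ belongs to $H^{k}_{k-1+\delta}(G^\varepsilon)$. With these routine details added, your argument is a valid substitute for the citation; what the citation buys is brevity and the full generality of Kondrat'ev's setting, while your proof has the merit of keeping the paper self-contained in the two-dimensional case actually used.
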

{\it Proof} follows from Lemma~4.9 in~\cite{KondrTMMO67} for $k=1$
and from Lemma~4.11 in~\cite{KondrTMMO67} for $k\ge2$.

\begin{lemma}\label{lPsi=P1+Phi}
If $\psi\in W^{k-1/2}(\Gamma^\varepsilon)$, $k\ge1$, then the
following assertions are true{\rm :}
\begin{enumerate}
\item
$\psi(r)=P_1(r)+\varphi(r)$ for $0<r<\varepsilon,$ where $
P_1(r)=\sum\limits_{\beta=0}^{k-2}p_\beta r^\beta$, $\varphi\in
W^{k-1/2}(\Gamma^\varepsilon)\cap
H_\delta^{k-1/2}(\Gamma^\varepsilon)$ $\forall\delta>0 $ {\rm(}if
$k=1$, we set $P_1(r)\equiv0${\rm);} in particular, $\psi\in
H_{k-1+\delta}^{k-1/2}(\Gamma^\varepsilon)${\rm;}
\item
$(d^\beta \psi/d r^\beta)|_{r=0}=(d^\beta P_1/d r^\beta)|_{r=0}$
for $\beta=0,\dots, k-2;$
\item $
\sum\limits_{\beta=0}^{k-2}|p_\beta|+\|\varphi\|_{H_\delta^{k-1/2}(\Gamma^\varepsilon)}\le
c_\delta\|\psi\|_{W^{k-1/2}(\Gamma^\varepsilon)}, $ where
$c_\delta>0$ does not depend on $\psi$.
\end{enumerate}
\end{lemma}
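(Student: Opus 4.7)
The plan is to reduce Lemma~\ref{lPsi=P1+Phi} to Lemma~\ref{lu=P+v} by lifting $\psi$ to an extension on $G^\varepsilon$, decomposing that extension, and restricting back to $\Gamma^\varepsilon$.

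First, by the definition~\eqref{eqTraceNormW} of the trace norm, choose an extension $u\in W^k(G^\varepsilon)$ with $u|_{\Gamma^\varepsilon}=\psi$ and $\|u\|_{W^k(G^\varepsilon)}\le 2\|\psi\|_{W^{k-1/2}(\Gamma^\varepsilon)}$. Applying Lemma~\ref{lu=P+v} yields a decomposition $u=P+v$ in $G^\varepsilon$ with $P(y)=\sum_{|\alpha|\le k-2}p_\alpha y^\alpha$, $v\in W^k(G^\varepsilon)\cap H_\delta^k(G^\varepsilon)$ for every $\delta>0$, together with the associated bounds on the $p_\alpha$ and on $\|v\|_{H_\delta^k(G^\varepsilon)}$. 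Since $\Gamma^\varepsilon$ is the segment $\{y_2=0,\ 0<y_1<\varepsilon\}$ parametrized by $r=y_1$, restricting $P$ to $\Gamma^\varepsilon$ retains only those monomials with $\alpha_2=0$, giving
$$
P|_{\Gamma^\varepsilon}(r)=\sum_{|\alpha|\le k-2,\,\alpha_2=0}p_\alpha r^{\alpha_1}=\sum_{\beta=0}^{k-2}p_\beta r^\beta=:P_1(r),
$$
where $p_\beta:=p_{(\beta,0)}$. Setting $\varphi:=v|_{\Gamma^\varepsilon}$ then produces the desired decomposition $\psi=P_1+\varphi$.

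The memberships $\varphi\in W^{k-1/2}(\Gamma^\varepsilon)$ and $\varphi\in H_\delta^{k-1/2}(\Gamma^\varepsilon)$, as well as the inequality $\|\varphi\|_{H_\delta^{k-1/2}(\Gamma^\varepsilon)}\le \|v\|_{H_\delta^k(G^\varepsilon)}$, follow immediately from the fact that both trace spaces are defined by infima over admissible extensions, and $v$ is itself such an extension. Combining this with the bounds from Lemma~\ref{lu=P+v} and with $\|u\|_{W^k(G^\varepsilon)}\le 2\|\psi\|_{W^{k-1/2}(\Gamma^\varepsilon)}$ delivers assertion~3. Assertion~2 is obtained by restricting the identity $D^\alpha u|_{y=0}=D^\alpha P|_{y=0}$ from part~2 of Lemma~\ref{lu=P+v} to tangential multi-indices $\alpha=(\beta,0)$ with $\beta\le k-2$, since along $\Gamma^\varepsilon$ the operator $d/dr$ coincides with $\partial/\partial y_1$ and these derivatives of $u$ are continuous at the origin by Sobolev embedding whenever $k-\beta\ge 2$.

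I do not foresee any serious obstacle: the argument is a mechanical lift-decompose-restrict procedure, and the weighted trace bound is essentially definitional. The only mild point of care is that the decomposition is not claimed to be canonical---different extensions $u$ could in principle yield different polynomials $P_1$; however, this is irrelevant to the statement, which asserts only existence, and in any case the coefficients $p_\beta$ are recovered uniquely from $\psi$ via the identity in assertion~2 whenever $k-\beta\ge 2$.
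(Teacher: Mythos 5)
Your proposal is correct and follows essentially the same route as the paper: the paper's proof also takes an extension $u\in W^k(G^\varepsilon)$ of $\psi$ with $\|u\|_{W^k(G^\varepsilon)}\le 2\|\psi\|_{W^{k-1/2}(\Gamma^\varepsilon)}$ and then applies Lemma~\ref{lu=P+v}, leaving the restriction details implicit. Your write-up simply spells out the restriction of the polynomial and the trace estimates that the paper leaves to the reader.
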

\begin{proof}
Consider a function $u\in W^k(G^\varepsilon)$ such that
$u|_{\Gamma^\varepsilon}=\psi$ and $ \|u\|_{W^k(G^\varepsilon)}\le
2\|\psi\|_{W^{k-1/2}(\Gamma^\varepsilon)}. $ Now it remains to
apply Lemma~\ref{lu=P+v}.
\end{proof}

\begin{lemma}\label{lPsig=0}
Let $\psi\in W^{k-1/2}(\Gamma)$, $k\ge2$, and let
\begin{equation}\label{eqPsig=0_1}
\dfrac{d^s\psi}{dr^s}\bigg|_{y=0}=0,\qquad s=0,\dots, l,
\end{equation}
for a fixed $l\le k-2$. Then $\psi\in
H_{k-2-l+\delta}^{k-1/2}(\Gamma)$ $\forall\delta>0$ and
\begin{equation}\label{eqlPsig=0_2}
\|\psi\|_{H_{k-2-l+\delta}^{k-1/2}(\Gamma)}\le
c_\delta\|\psi\|_{W^{k-1/2}(\Gamma)},
\end{equation}
where $c_\delta>0$ does not depend on $\psi$.
\end{lemma}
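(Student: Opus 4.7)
The plan is to split $\psi$ near the endpoint $y=0$ using Lemma~\ref{lPsi=P1+Phi}, exploit the vanishing conditions~\eqref{eqPsig=0_1} to eliminate the low-order monomials of the polynomial part, and then check by a direct weighted-norm computation that the surviving monomials lie in the target space.

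First I would localize: pick a cutoff $\chi\in C_0^\infty(\mathbb R^2)$ equal to $1$ near $y=0$ and supported in $\mathcal O_\varepsilon(0)$, and decompose $\psi=\chi\psi+(1-\chi)\psi$. On the support of $1-\chi$ the weight $\rho$ is bounded away from $0$ at the relevant endpoint, so the contribution of $(1-\chi)\psi$ to the $H_{k-2-l+\delta}^{k-1/2}(\Gamma)$-norm is controlled by its $W^{k-1/2}(\Gamma)$-norm; if the other endpoint of $\overline{\Gamma}\setminus\Gamma$ also lies in $\mathcal M$, a symmetric localization there (understanding~\eqref{eqPsig=0_1} to apply at that endpoint as well, as is always the case when this lemma is invoked through Condition~\ref{condC2}) reduces matters to estimating $\chi\psi$ on $\Gamma^\varepsilon$.

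Applying Lemma~\ref{lPsi=P1+Phi} to $\chi\psi\in W^{k-1/2}(\Gamma^\varepsilon)$ yields
$$
\chi\psi(r)=P_1(r)+\varphi(r),\qquad P_1(r)=\sum_{\beta=0}^{k-2}p_\beta r^\beta,\qquad \varphi\in H_\delta^{k-1/2}(\Gamma^\varepsilon),
$$
together with the accompanying norm estimate. Part~2 of that lemma gives $(d^\beta P_1/dr^\beta)|_{r=0}=(d^\beta(\chi\psi)/dr^\beta)|_{r=0}$ for $\beta=0,\dots,k-2$; combined with~\eqref{eqPsig=0_1}, this forces $p_\beta=0$ for $\beta=0,\dots,l$, so $P_1$ is a linear combination of monomials $r^\beta$ with $l+1\le\beta\le k-2$.

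The key computational step is to verify that each such monomial belongs to $H_{k-2-l+\delta}^{k-1/2}(\Gamma^\varepsilon)$. I would take as extension $u(y_1,y_2)=y_1^\beta$ (localized by a smooth cutoff) and compute $\|u\|_{H_a^k(G^\varepsilon)}^2$ in polar coordinates: only derivatives in $y_1$ of order $\alpha_1\le\beta$ contribute, and the order-$\alpha$ integrand reduces to $r^{2(a+\beta-k)+1}$ times a bounded angular factor, so the integral is finite iff $a>k-1-\beta$. For $\beta\ge l+1$ this is achieved by $a=k-2-l+\delta$ with any $\delta>0$. Since $k-2-l+\delta\ge\delta$, the inclusion $H_\delta^{k-1/2}(\Gamma^\varepsilon)\subset H_{k-2-l+\delta}^{k-1/2}(\Gamma^\varepsilon)$ is immediate and places the remainder $\varphi$ in the target space. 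Combining these bounds with part~3 of Lemma~\ref{lPsi=P1+Phi} yields~\eqref{eqlPsig=0_2}.

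The main obstacle is essentially bookkeeping: the polar-coordinate integration that pins down exactly the monomials allowed by the weight index, and the consistent treatment of both endpoints. Conceptually the argument reduces to the algebraic observation that hypothesis~\eqref{eqPsig=0_1} kills precisely the monomials of $P_1$ whose weight exponents at $r=0$ would otherwise be too singular for $a=k-2-l+\delta$.
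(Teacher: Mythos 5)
Your argument is correct and follows essentially the same route as the paper: decompose via Lemma~\ref{lPsi=P1+Phi} (parts~1 and~2), use~\eqref{eqPsig=0_1} to annihilate the monomials $r^\beta$ with $\beta\le l$, check directly that the surviving monomials lie in $H_{k-2-l+\delta}^{k-1/2}$, and conclude with part~3 of Lemma~\ref{lPsi=P1+Phi}. The only differences are expository: you spell out the polar-coordinate computation the paper leaves as ``one can directly verify'' and make explicit the localization at the two endpoints, both of which are fine.
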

\begin{proof}
It follows from relations~\eqref{eqPsig=0_1} and from
Lemma~\ref{lPsi=P1+Phi} (parts~1 and~2) that
\begin{equation}\label{eqlPsig=0_3}
\psi(r)=\sum\limits_{\beta=l+1}^{k-2}p_\beta
r^\beta+\varphi(r),\qquad 0<r<\varepsilon,
\end{equation}
where
\begin{equation}\label{eqlPsig=0_4}
\varphi\in H_\delta^{k-1/2}(\Gamma^\varepsilon)\subset
H_{k-2-l+\delta}^{k-1/2}(\Gamma^\varepsilon),\qquad \delta>0.
\end{equation}

If $l=k-2$, then the sum in~\eqref{eqlPsig=0_3} is absent and the
lemma follows from~\eqref{eqlPsig=0_4} and from part~3 of
Lemma~\ref{lPsi=P1+Phi}.

If $l\le k-3$, then the sum comprises the terms $r^\beta$ for
$\beta\ge l+1$. One can directly verify that $r^\beta\in
H_{k-2-l+\delta}^{k-1/2}(\Gamma^\varepsilon)$ for the above
$\beta$ and $\forall\delta>0$. Therefore,
combining~\eqref{eqlPsig=0_3} with~\eqref{eqlPsig=0_4} and with
part~3 of Lemma~\ref{lPsi=P1+Phi}, we complete the proof.
\end{proof}

\begin{lemma}\label{lbPsi}
Let $\psi\in H_{a+l}^{k-1/2}(\Gamma)$, $l,k\in\mathbb N$,
$a\in\mathbb R$, and let $b\in C^\infty(\overline{\Gamma})$ be a
compactly supported function satisfying the relations $
\dfrac{\partial^s b}{\partial r^s}\bigg|_{r=0}=0$, $ s=0,\dots
l-1. $ Then
\begin{equation}\label{eqbPsi_A}
\|b\psi\|_{H_a^{k-1/2}(\Gamma)}\le
c\|\psi\|_{H_{a+l}^{k-1/2}(\Gamma)}.
\end{equation}
\end{lemma}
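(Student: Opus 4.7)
The plan is to reduce this boundary estimate to a bulk weighted estimate via the trace definition and to exploit the vanishing of $b$ at $r=0$ by factoring out an explicit power of $\rho$. Since $b\in C^\infty(\overline\Gamma)$ vanishes to order $l-1$ at the origin, Taylor's theorem gives $b(r)=r^l b_0(r)$ with $b_0$ smooth near $r=0$; away from the origin $b$ is smooth and compactly supported. Using the Appendix coordinates in which $\Gamma$ locally coincides with $Oy_1$, extend $b_0$ smoothly off $\Gamma$ into $G$ to a function $\tilde b_0\in C_0^\infty(\overline G)$ and set $\tilde b(y)=\rho(y)^l\tilde b_0(y)$, where $\rho(y)=|y|$. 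Then $\tilde b|_\Gamma=b$, and since $\rho^l$ is smooth away from the origin and positively homogeneous of degree $l$, Leibniz' rule yields the pointwise bound
\[
|D^\alpha\tilde b(y)|\le c_\alpha\,\rho(y)^{l-|\alpha|}\qquad \forall\,\alpha,\ y\neq 0.
\]

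Next, given $\psi\in H_{a+l}^{k-1/2}(\Gamma)$, pick, by the definition of the trace norm, an extension $u\in H_{a+l}^k(G)$ with $u|_\Gamma=\psi$ and $\|u\|_{H_{a+l}^k(G)}\le 2\|\psi\|_{H_{a+l}^{k-1/2}(\Gamma)}$. Then $\tilde b u$ restricts to $b\psi$ on $\Gamma$, and Leibniz' rule combined with the pointwise bound on $\tilde b$ gives
\[
|D^\alpha(\tilde b u)|\le c\sum_{\beta\le\alpha}\rho^{\,l-|\alpha|+|\beta|}\,|D^\beta u|,
\]
so that multiplying by the weight yields
\[
\rho^{2(a+|\alpha|-k)}|D^\alpha(\tilde b u)|^2\le c\sum_{\beta\le\alpha}\rho^{2((a+l)+|\beta|-k)}|D^\beta u|^2,
\]
which is term by term controlled by the integrand of $\|u\|_{H_{a+l}^k(G)}^2$. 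Integrating over $G$ and summing over $|\alpha|\le k$ gives $\|\tilde b u\|_{H_a^k(G)}\le c'\|u\|_{H_{a+l}^k(G)}$, and taking the infimum over admissible extensions $u$ of $\psi$ yields inequality~\eqref{eqbPsi_A}.

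The only delicate point is the construction of $\tilde b$ with the uniform bound $|D^\alpha\tilde b|\le c\rho^{l-|\alpha|}$; this reduces to the factorization $b=r^l b_0$ at the endpoint and the homogeneity of $\rho^l$, after which the weighted Leibniz calculation is completely routine. Note in particular that the estimate $|D^\alpha \rho^l(y)|\le c\rho(y)^{l-|\alpha|}$ remains valid even when $|\alpha|>l$, since $\rho^l$ is smooth away from the origin and homogeneous of degree $l$.
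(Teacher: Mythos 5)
Your proof is correct and follows essentially the same route as the paper: extend $\psi$ to a near-optimal $u\in H_{a+l}^{k}(G)$, multiply by a two-dimensional extension of $b$ whose derivatives obey $|D^\alpha(\cdot)|\le c\,\rho^{\,l-|\alpha|}$, and conclude by the weighted Leibniz estimate together with the definition of the trace norm. The only cosmetic difference is in how that extension is built: the paper extends $b$ constantly in the transverse variable and gets the pointwise bound from Taylor's formula, while you factor $b=r^l b_0$ and use the homogeneity of $\rho^l$; both yield the same key estimate.
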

\begin{proof}
Clearly, it suffices to carry out the proof for compactly
supported functions $\psi$ and for $Q$ and $\Gamma$ replaced by
$K=\{y\in\mathbb R^2:\ 0<\omega<\omega_0\}$ and
$\gamma=\{y\in\mathbb R^2:\ \omega=0\}$, respectively.

 Denote by $\hat b\in C^\infty(\mathbb R)$ an extension of
$b(y_1)$ to $\mathbb R$ and introduce the function
$B(y_1,y_2)=\hat b(y_1)$ for $(y_1,y_2)\in\mathbb R^2$. Clearly,
we have
\begin{equation}\label{eqbPsi_B}
B\in C^\infty(\overline{K}),\qquad D^\sigma B|_{y=0}=0,\quad
|\sigma|\le l-1.
\end{equation}

Let $u\in H_{a+l}^k(K)$ be a compactly supported extension of
$\psi$ to the angle $K$ such that
\begin{equation}\label{eqbPsi_C}
\|u\|_{H_{a+l}^k(K)}\le c_1\|\psi\|_{H_{a+l}^{k-1/2}(\gamma)}.
\end{equation}
It follows from Teylor's formula and from~\eqref{eqbPsi_B} that $
|D^\sigma B|=O\big(r^{l-|\sigma|}\big) $ for any $\sigma$;
therefore,
\begin{multline*}
\|Bu\|_{H_{a}^k(K)}^2=\sum\limits_{|\alpha|\le k}\int\limits_K
r^{2(a+|\alpha|-k)}|D^\alpha(Bu)|^2 dy\le c_2
\sum\limits_{|\sigma|+|\zeta|\le k}\int\limits_K
r^{2(a+|\sigma|+|\zeta|-k)}|D^\sigma B|^2|D^\zeta u|^2 dy\\
\le c_3 \sum\limits_{|\zeta|\le k}\int\limits_K
r^{2(a+l+|\zeta|-k)}|D^\zeta u|^2 dy=c_3\|u\|_{H_{a+l}^k(K)}^2
\end{multline*}
(remind that $u$ is compactly supported). Combining this estimate
with~\eqref{eqbPsi_C}, we finally obtain
$$
\|b\psi\|_{H_{a}^{k-1/2}(\gamma)}\le\|Bu\|_{H_{a}^k(K)}\le
c_3^{1/2}\|u\|_{H_{a+l}^k(K)}\le
c_3^{1/2}c_1\|\psi\|_{H_{a+l}^{k-1/2}(\gamma)}.
$$
\end{proof}

\subsection{Some Properties of Fredholm Operators}

Let $H_1$ and $H_2$ be Hilbert spaces, and let $ P:\Dom(P)\subset
H_1\to H_2 $ be a linear (in general, unbounded) operator.

\begin{definition}\label{defFredholm}
The operator $P$ is said {\it to have the Fredholm property} if it
is closed, its image is closed, and the dimension of its kernel
$\ker P$ and the codimension of its image $\mathcal R(P)$ are
finite. The number $\ind P=\dim\ker P-\codim\mathcal R(P)$ is
called an {\it index} of the Fredholm operator $P$.
\end{definition}

\begin{theorem}[see Theorem~7.1 in~\cite{Kr}]\label{thAppBLem7.1Kr}
Let $H$ be a Hilbert space such that $H_1$ is compactly embedded
into $H$, and let the operator $P$ be closed. Then $\dim\ker
P<\infty$ and $\mathcal R(P)=\overline{\mathcal R(P)}$ iff
$$
\|u\|_{H_1}\le c(\|Pu\|_{H_2}+\|u\|_H)\quad \forall u\in\Dom(P).
$$
\end{theorem}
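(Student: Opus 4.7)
The plan is to prove the two implications separately. The forward direction is a one-shot application of the closed graph / open mapping theorem, and the reverse direction splits into finite-dimensionality of $\ker P$ and closedness of $\mathcal R(P)$; both of these rely on the same compactness-plus-a-priori-estimate technique, which is where the compact embedding $H_1\hookrightarrow H$ enters.

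For $(\Rightarrow)$, I would equip $\Dom(P)$ with the graph norm $\|u\|_g=(\|u\|_{H_1}^2+\|Pu\|_{H_2}^2)^{1/2}$; since $P$ is closed, this is a Banach (in fact Hilbert) space. Because $\ker P$ is finite-dimensional, it is a closed subspace, and the graph-orthogonal complement $M\subset\Dom(P)$ is well defined and closed. The restriction $P|_M\colon M\to\mathcal R(P)$ is a continuous bijection onto a Banach space (using $\mathcal R(P)=\overline{\mathcal R(P)}$), so the open mapping theorem yields $\|u\|_{H_1}\le c\|Pu\|_{H_2}$ for all $u\in M$. For a general $u=u_0+u'\in\ker P\oplus M$ one has $Pu=Pu'$, and since $\ker P$ is finite-dimensional the norms $\|\cdot\|_{H_1}$ and $\|\cdot\|_H$ are equivalent on $\ker P$; combining with $\|u_0\|_H\le\|u\|_H+\|u'\|_H\le\|u\|_H+c'\|u'\|_{H_1}$ (via the continuous embedding $H_1\hookrightarrow H$ implied by compactness) delivers the desired estimate.

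For $(\Leftarrow)$, finite-dimensionality of $\ker P$ is easy: on $\ker P$ the estimate reduces to $\|u\|_{H_1}\le c\|u\|_H$, and a Cauchy sequence in $(\ker P,\|\cdot\|_{H_1})$ with $\|u_n\|_{H_1}\le 1$ is precompact in $H$ by the compact embedding; the estimate then upgrades $H$-Cauchy to $H_1$-Cauchy, so the unit ball of $\ker P$ is compact and $\dim\ker P<\infty$. For closedness of $\mathcal R(P)$, let $Pu_n=f_n\to f$ in $H_2$. Using that $\ker P$ is finite-dimensional, I would choose $v_n\in\ker P$ realizing the $H_1$-distance from $u_n$ to $\ker P$, and set $w_n=u_n-v_n$, so that $\mathrm{dist}(w_n,\ker P)_{H_1}=\|w_n\|_{H_1}$ and $Pw_n=f_n$. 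The main obstacle, as in every argument of this type, is to show that $\{w_n\}$ is bounded in $H_1$. If it were not, the normalization $\tilde w_n=w_n/\|w_n\|_{H_1}$ would satisfy $P\tilde w_n\to 0$ in $H_2$; the compact embedding yields an $H$-convergent subsequence, and the a priori estimate
\[
\|\tilde w_n-\tilde w_m\|_{H_1}\le c\bigl(\|P\tilde w_n-P\tilde w_m\|_{H_2}+\|\tilde w_n-\tilde w_m\|_H\bigr)
\]
then makes it $H_1$-Cauchy, converging to some $\tilde w$; by closedness of $P$ one gets $\tilde w\in\ker P$, contradicting $\mathrm{dist}(\tilde w_n,\ker P)_{H_1}=1$. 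Once $\{w_n\}$ is $H_1$-bounded, the same compactness-plus-estimate argument applied directly to $w_n-w_m$ (now with $Pw_n-Pw_m=f_n-f_m\to 0$) makes $\{w_n\}$ Cauchy in $H_1$, and closedness of $P$ places its limit $w$ in $\Dom(P)$ with $Pw=f$, giving $f\in\mathcal R(P)$.
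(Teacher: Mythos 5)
Your proof is correct. The paper gives no argument of its own for this statement (it is quoted from Theorem~7.1 of Krein's book), and your two-directional argument --- graph-norm plus open mapping theorem for the ``only if'' part, and the Peetre-type compactness-plus-estimate scheme (Riesz's lemma for $\dim\ker P<\infty$, distance-minimizing $v_n\in\ker P$ and the normalization/contradiction step to bound $w_n$, then closedness of $P$ to identify the limits) for the ``if'' part --- is exactly the standard proof underlying that citation, with all the delicate points (attainability of the distance, membership of limits in $\Dom(P)$ and $\ker P$ via closedness) handled correctly.
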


The proof of the following result is contained in part~2 of the
proof of Lemma~2.5 in~\cite{GurMatZam05}.
\begin{theorem}\label{thAppBdotPP}
Let $ \dot P:\Dom(\dot P)\subset H_1\to H_2 $ be a Fredholm
operator such that $P$ is an extension of $\dot P$, i.e., $\dot
P\subset P.$ Suppose that $\dim\ker P<\infty$, $\mathcal
R(P)=\overline{\mathcal R(P)}$, and $\codim \mathcal R(P)<\infty$.
Then the operator $P$ is closed {\rm (}hence, it has the Fredholm
property{\rm )}.
\end{theorem}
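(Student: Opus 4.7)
The plan is to exhibit $P$ as an extension of $\dot P$ that adds only finitely many ``new'' directions to the domain, and then to invoke the standard fact that the sum of a closed subspace and a finite-dimensional subspace of a Hilbert space is closed.

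First I would extract the finite-dimensional excess between $P$ and $\dot P$. Since $\dot P\subset P$, we have $\mathcal R(\dot P)\subset\mathcal R(P)$, and both subspaces are closed and of finite codimension in $H_2$ (by the Fredholm property of $\dot P$ and by the hypotheses on $P$), so the quotient $\mathcal R(P)/\mathcal R(\dot P)$ is finite-dimensional. I would pick $w_1,\dots,w_k\in\mathcal R(P)$ whose cosets form a basis of this quotient, choose preimages $v_j\in\Dom(P)$ with $Pv_j=w_j$, and set $V=\Span(v_1,\dots,v_k)$ together with $F=V+\ker P$. Both $V$ and $\ker P$ are finite-dimensional, hence so is $F$.

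Next I would show $\Dom(P)=\Dom(\dot P)+F$. Given any $u\in\Dom(P)$, one writes $Pu=\dot Pz+\sum_j c_jw_j$ for some $z\in\Dom(\dot P)$ and scalars $c_j\in\mathbb C$, using the definition of the $w_j$. Applying $P$ to $u-z-\sum_j c_jv_j$ yields zero, so this difference lies in $\ker P\subset F$, giving the claimed decomposition. For any $u=z+f$ of this form we have $(u,Pu)=(z,\dot Pz)+(f,Pf)$, whence
$$
\Gr P=\Gr\dot P+\Gr(P|_F).
$$
The graph $\Gr\dot P$ is closed in $H_1\times H_2$ because $\dot P$ is Fredholm, and $\Gr(P|_F)$ is a finite-dimensional subspace of $H_1\times H_2$, hence closed. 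Since the sum of a closed subspace and a finite-dimensional subspace of a Hilbert space is closed (by passing to the quotient modulo the closed summand, in which the finite-dimensional image is automatically closed), $\Gr P$ is closed, i.e., $P$ is closed.

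The main obstacle is the algebraic decomposition $\Dom(P)=\Dom(\dot P)+F$, which uses both codimension hypotheses on $P$ crucially: without $\codim\mathcal R(P)<\infty$ the quotient $\mathcal R(P)/\mathcal R(\dot P)$ need not be finite-dimensional, and without $\dim\ker P<\infty$ one cannot absorb the residual term $u-z-\sum_jc_jv_j$ into a finite-dimensional complement of $\Dom(\dot P)$. Once this step is in place, closedness of $\Gr P$ follows from the routine functional-analytic fact above, and the Fredholm property of $P$ is immediate from the three hypotheses combined with closedness.
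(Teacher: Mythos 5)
Your proof is correct: the decomposition $\Dom(P)=\Dom(\dot P)+F$ with $F=\Span(v_1,\dots,v_k)+\ker P$ finite-dimensional, the resulting identity $\Gr P=\Gr \dot P+\Gr (P|_F)$, and the standard fact that the sum of a closed subspace and a finite-dimensional subspace is closed do yield the closedness of $P$, after which the Fredholm property is immediate from Definition~\ref{defFredholm}. Note that the paper does not prove the statement here at all but refers to part~2 of the proof of Lemma~2.5 in~\cite{GurMatZam05}, where the argument is of the same nature (the graph of $P$ exceeds the closed graph of $\dot P$ by only finitely many dimensions), so your write-up is essentially a self-contained version of that argument. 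One inaccuracy in your closing paragraph: the hypothesis $\codim\mathcal R(P)<\infty$ is not what makes $\mathcal R(P)/\mathcal R(\dot P)$ finite-dimensional --- this already follows from $\codim\mathcal R(\dot P)<\infty$, i.e.\ from the Fredholm property of $\dot P$, since $\mathcal R(\dot P)\subset\mathcal R(P)\subset H_2$; indeed your own construction shows $\mathcal R(P)=\mathcal R(\dot P)+\Span(w_1,\dots,w_k)$ is automatically closed and of finite codimension. Thus only $\dim\ker P<\infty$ is genuinely used in the closedness argument (to keep $F$ finite-dimensional), while the closed-range and finite-codimension hypotheses on $P$ serve merely to match the definition of the Fredholm property in the conclusion; this does not affect the validity of your proof.
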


Let $ A:\Dom(A)\subset H_1\to H_2 $ be a linear operator.

\begin{theorem}[see Sec.~16 in~\cite{Kr}]\label{thAppBTheorSec16Kr} Let the
operator $P$ have the Fredholm property, $A$ be bounded, and
$\Dom(A)=H_1$. Then the operator $P+A$ has the Fredholm property,
$\ind(P+A)=\ind P$, $\dim\ker(P+A)\le\dim\ker P$, and
$\codim\mathcal R(P+A)\le \codim\mathcal R(P)$, provided that
$\|A\|$ is sufficiently small.
\end{theorem}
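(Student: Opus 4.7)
My plan is to reduce this to the classical small-perturbation statement for \emph{bounded} Fredholm operators by passing to the graph norm of $P$. Since $P$ is closed, $\Dom(P)$ becomes a Hilbert space $H_P$ under the graph norm $\|u\|_P^2=\|u\|_{H_1}^2+\|Pu\|_{H_2}^2$, and the embedding $H_P\hookrightarrow H_1$ has norm $\le 1$. The operator $P$, viewed as $\tilde P:H_P\to H_2$, is then bounded and Fredholm with the same kernel, image, codimension, and index as the original $P$. Likewise, since $A$ is bounded on all of $H_1$, its restriction $\tilde A:H_P\to H_2$ satisfies $\|\tilde A u\|_{H_2}\le\|A\|\cdot\|u\|_{H_1}\le\|A\|\cdot\|u\|_P$, so $\|\tilde A\|_{H_P\to H_2}\le\|A\|_{H_1\to H_2}$.

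Next, I would invoke the well-known stability result for bounded Fredholm operators between Hilbert spaces: decompose $H_P=\ker\tilde P\oplus X_1$ and $H_2=\mathcal R(\tilde P)\oplus Y_1$ with $Y_1$ finite-dimensional, and write $\tilde P$ in block-matrix form. The only nontrivial block is the restriction $\tilde P|_{X_1}:X_1\to\mathcal R(\tilde P)$, which is an isomorphism by the open mapping theorem. Adding $\tilde A$ perturbs each block by an operator of norm at most $\|\tilde A\|$; for $\|A\|$ small enough, the isomorphism block remains an isomorphism. A short Schur-complement argument then shows that $\tilde P+\tilde A$ is Fredholm with $\ind(\tilde P+\tilde A)=\ind\tilde P$, while $\dim\ker(\tilde P+\tilde A)\le\dim\ker\tilde P$ and $\codim\mathcal R(\tilde P+\tilde A)\le\codim\mathcal R(\tilde P)$, since in the new block form the kernel and cokernel cannot be larger than those of $\tilde P$.

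Finally, I would transfer the conclusion back to the unbounded setting. Because $\Dom(A)=H_1\supset\Dom(P)$, we have $\Dom(P+A)=\Dom(P)=H_P$ as a set, and the kernel and image of $P+A$ as an unbounded operator coincide with those of $\tilde P+\tilde A$. Closedness of $P+A$ from $H_1$ to $H_2$ follows because $A$ is bounded and everywhere defined on $H_1$, so for any sequence $u_n\to u$ in $H_1$ with $(P+A)u_n\to g$ in $H_2$, one has $Au_n\to Au$, hence $Pu_n\to g-Au$, and closedness of $P$ yields $u\in\Dom(P)$ and $Pu=g-Au$, i.e.\ $u\in\Dom(P+A)$ and $(P+A)u=g$. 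Combining this with the Fredholm properties of $\tilde P+\tilde A$ gives all the required assertions.

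The main (minor) obstacle is purely bookkeeping: confirming that the graph-norm reformulation preserves kernel, image, codimension, index, and closedness, and that the block-matrix perturbation argument yields the sharp one-sided inequalities $\dim\ker(P+A)\le\dim\ker P$ and $\codim\mathcal R(P+A)\le\codim\mathcal R(P)$ rather than just the index equality. Since the open mapping theorem and finite-dimensional linear algebra do all the real work, no analytic obstruction arises.
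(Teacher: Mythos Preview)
The paper does not give a proof of this theorem; it is stated in the appendix as a known result with the citation ``see Sec.~16 in~\cite{Kr}'' and is used as a black box. Your argument---reducing to the bounded case via the graph norm on $\Dom(P)$, then using the block decomposition $H_P=\ker\tilde P\oplus X_1$, $H_2=\mathcal R(\tilde P)\oplus Y_1$ with a Neumann/Schur-complement perturbation of the invertible block---is correct and is essentially the standard textbook proof (this is how Krein and Kato handle it). The bookkeeping you flag is routine: the graph-norm identification preserves kernel, image, and index verbatim since $\Dom(P+A)=\Dom(P)$, and the Schur complement $S:\ker\tilde P\to Y_1$ between the finite-dimensional pieces immediately gives $\dim\ker(P+A)=\dim\ker S\le\dim\ker\tilde P$ and $\codim\mathcal R(P+A)=\dim Y_1-\dim\mathcal R(S)\le\dim Y_1$, together with $\ind S=\dim\ker\tilde P-\dim Y_1=\ind P$.
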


\begin{definition}[see,
e.g.,~\cite{Kr,Kato}]\label{defPCompact} The operator $A$ is said
to be {\em relatively compact with respect to $P$} or simply {\em
$P$-compact} if $\Dom(P)\subset\Dom(A)$ and, for any sequence
$u_n\in\Dom(P)$ with both $\{u_n\}$ and $\{Pu_n\}$ bounded,
$\{Au_n\}$ contains a convergent subsequence.
\end{definition}

\begin{theorem}[see Theorem~5.26
in Chap.~4 of~\cite{Kato}]\label{thAppBTheor5.26Kato} Suppose that
the operator $P$ has the Fredholm property and the operator $A$ is
$P$-compact. Then the operator $P+A$ also has the Fredholm
property and $\ind (P+A)=\ind P$.
\end{theorem}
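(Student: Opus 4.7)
The plan is to reduce the unbounded statement to the classical bounded-operator result by passing to the graph space of $P$. Let $X$ denote $\Dom(P)$ endowed with the graph norm $\|u\|_X=(\|u\|_{H_1}^2+\|Pu\|_{H_2}^2)^{1/2}$; since $P$ is closed, $X$ is a Hilbert space and the inclusion $X\hookrightarrow H_1$ is continuous. Regard $P$ and $A$ as operators $\tilde P,\tilde A:X\to H_2$. The operator $\tilde P$ is bounded by definition of the norm on $X$. To see that $\tilde A$ is bounded, I would argue by contradiction: if there were a sequence $u_n\in X$ with $\|u_n\|_X\le 1$ and $\|Au_n\|_{H_2}\to\infty$, then $\{u_n\}$ and $\{Pu_n\}$ would both be bounded in their respective spaces, so by $P$-compactness $\{Au_n\}$ would contain a convergent (hence bounded) subsequence, a contradiction. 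With $\tilde A:X\to H_2$ bounded, Definition~\ref{defPCompact} translates exactly into the statement that $\tilde A:X\to H_2$ is a compact operator.

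Next, I would note that $\tilde P:X\to H_2$ is bounded Fredholm in the sense that $\ker\tilde P=\ker P$ is finite dimensional and $\mathcal R(\tilde P)=\mathcal R(P)$ is closed and of finite codimension in $H_2$. Applying the classical theorem that a bounded Fredholm operator remains Fredholm with the same index after addition of a compact operator (the bounded counterpart of Theorem~\ref{thAppBTheorSec16Kr}, proved by the Riesz–Schauder construction of a parametrix), I obtain that $\tilde P+\tilde A:X\to H_2$ is Fredholm and $\ind(\tilde P+\tilde A)=\ind\tilde P=\ind P$. Set-theoretically, $\ker(P+A)=\ker(\tilde P+\tilde A)$ and $\mathcal R(P+A)=\mathcal R(\tilde P+\tilde A)$, so $\dim\ker(P+A)<\infty$, $\codim\mathcal R(P+A)<\infty$, and $\mathcal R(P+A)$ is closed in $H_2$; moreover, the index formula transfers verbatim, yielding $\ind(P+A)=\ind P$ once closedness of $P+A$ on $H_1$ is established.

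The remaining and main obstacle is to verify that $P+A$, viewed as an unbounded operator on $H_1$ with domain $\Dom(P)$, is closed; the difficulty is that $A$ need not be closed or even closable on $H_1$, so one cannot invoke $P$-compactness naively. Let $u_n\in\Dom(P)$, $u_n\to u$ in $H_1$ and $(P+A)u_n\to g$ in $H_2$. First, I would show that $\{Pu_n\}$ is bounded: otherwise, setting $v_n=u_n/\|Pu_n\|_{H_2}$ gives $\|v_n\|_{H_1}\to 0$, $\|Pv_n\|_{H_2}=1$, and $(P+A)v_n\to 0$, so $P$-compactness produces a subsequence along which $\{Av_n\}$ converges to some $h$, whence $Pv_n=(P+A)v_n-Av_n\to-h$; closedness of $P$ applied to $v_n\to 0$ forces $h=0$, contradicting $\|Pv_n\|_{H_2}=1$. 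With $\{Pu_n\}$ bounded, a second application of $P$-compactness selects a subsequence with $Au_{n_k}\to h$, hence $Pu_{n_k}\to g-h$; closedness of $P$ yields $u\in\Dom(P)$ and $Pu=g-h$. Finally, $u_{n_k}\to u$ in the norm of $X$, and boundedness of $\tilde A:X\to H_2$ forces $Au_{n_k}\to Au$, so $h=Au$ and $(P+A)u=g$, as required.
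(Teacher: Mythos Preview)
The paper does not supply its own proof of this statement: Theorem~\ref{thAppBTheor5.26Kato} is quoted in the appendix as a reference result from Kato's monograph, with no argument given. There is therefore nothing in the paper to compare your proposal against.

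That said, your argument is correct and is one of the standard routes to this result. The reduction to bounded operators via the graph norm is clean, and your verification that $P+A$ is closed is the genuine crux and is handled properly: the normalization trick to show $\{Pu_n\}$ is bounded, followed by a second application of $P$-compactness and the closedness of $P$, is exactly what is needed. One minor remark: the boundedness of $\tilde A$ can alternatively be phrased as the statement that a $P$-compact operator has $P$-bound~$0$, which is how Kato organizes the proof; he then obtains closedness of $P+A$ (indeed of $P+tA$ for all $t$) directly from the relative-bound-zero perturbation theorem, and deduces index stability by a continuity argument in~$t$. Your approach bypasses the homotopy by invoking the bounded compact-perturbation theorem once, which is arguably more direct.
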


Finally, we introduce a concept of a gap between closed operators.
Let $ S:\Dom(S)\subset H_1\to H_2 $ be a linear operator. In the
space $H_1\times H_2$, we introduce the norm
$$
\|(u,f)\|=\left(\|u\|_{H_1}^2+\|f\|_{H_2}^2\right)^{1/2}\quad
\forall (u,f)\in H_1\times H_2.
$$

Set $ \delta(P,S)=\sup\limits_{u\in\Dom(P):\ \|(u,
Pu)\|=1}\dist\big((u, Pu),\Gr S\big)$, where $\Gr S$ is the graph
of the operator $S$.
\begin{definition}\label{defGap}
The number $ \hat\delta(P,S)=\max\{\delta(P,S),\delta(S,P)\} $ is
called a {\it gap between the operators $P$ and $S$}.
\end{definition}

\begin{theorem}[see Theorem~5.17 in Chap.~4
of~\cite{Kato}]\label{thAppBTheor5.17Kato} Let the operator $P$
have the Fredholm property and $S$ be closed. Then the operator
$S$ has the Fredholm property, $\ind S=\ind P$, $\dim\ker
S\le\dim\ker P$, and $\codim\mathcal R(S)\le \codim\mathcal R(P)$,
provided that the gap $\hat\delta(P,S)$ is sufficiently small.
\end{theorem}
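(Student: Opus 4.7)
The plan is to reformulate everything in terms of the graphs $\Gr P, \Gr S \subset H_1 \oplus H_2$, viewed as closed subspaces of the Hilbert space $H_1 \oplus H_2$ equipped with the direct-sum inner product. By the definition of the gap between closed operators given earlier, the quantity $\hat\delta(P,S)$ coincides with the gap between the closed subspaces $\Gr P$ and $\Gr S$ in the usual sense on the Grassmannian of closed subspaces of a Hilbert space. Set the reference subspace $L = H_1 \oplus \{0\}$ and compute directly that $\Gr P \cap L$ is canonically isomorphic to $\ker P$ (so its dimension equals $\dim\ker P$), while $\Gr P + L = H_1 \oplus \mathcal R(P)$ (so its codimension in $H_1 \oplus H_2$ equals $\codim \mathcal R(P)$). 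In particular, $\ind P = \dim(\Gr P \cap L) - \codim(\Gr P + L)$, an invariant depending only on the pair of closed subspaces $(\Gr P, L)$, and likewise for $S$.

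The heart of the argument is then a purely geometric fact about Fredholm pairs of closed subspaces in Hilbert space: if $(M, L)$ is a Fredholm pair (meaning $\dim(M \cap L) < \infty$ and $M + L$ is closed of finite codimension), then for every closed subspace $M'$ with $\hat\delta(M, M')$ sufficiently small, the pair $(M', L)$ is again Fredholm and
\[
\dim(M' \cap L) \le \dim(M \cap L), \qquad \codim(M' + L) \le \codim(M + L),
\]
with the difference of the two invariants (the index of the pair) being locally constant. Applied with $M = \Gr P$ and $M' = \Gr S$, this gives simultaneously that $S$ is Fredholm, $\ind S = \ind P$, $\dim\ker S \le \dim\ker P$, and $\codim\mathcal R(S) \le \codim\mathcal R(P)$. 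The closedness of $S$ is used to guarantee that $\Gr S$ is actually a closed subspace, so the subspace theory applies.

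The main obstacle is proving the stability statement for Fredholm pairs. The standard route uses two inputs: first, in Hilbert space the gap $\hat\delta(M, M')$ is equivalent to the operator-norm distance $\|P_M - P_{M'}\|$ between the orthogonal projections onto $M$ and $M'$; second, when $\|P_M - P_{M'}\| < 1$, the restriction $P_{M'}\!\!\mid_M : M \to M'$ is a bounded linear isomorphism with bounded inverse (constructible explicitly via the Neumann series for $I - (P_M - P_{M'})^2$ on $M$). Using this isomorphism, one lifts a basis of $M \cap L$ to a linearly independent family in $M'$ whose projection onto $L$ is still linearly independent (hence $\dim(M' \cap L) \le \dim(M \cap L)$ after a small perturbation argument), and dually one extracts a complement of $M + L$ realized in $M' + L$, yielding upper semicontinuity of $\codim(M + L)$. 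The constancy of the difference then follows from a standard duality/annihilator argument, completing the proof of the cited theorem.
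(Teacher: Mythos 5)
This statement is not proved in the paper at all: it is quoted verbatim from Kato (Theorem~5.17 in Chap.~4), so the only comparison available is with the proof in the cited source. Your reduction is exactly Kato's: pass to the graphs in $H_1\times H_2$, take $L=H_1\times\{0\}$, observe that $\Gr P\cap L\cong\ker P$ and $\Gr P+L=H_1\times\mathcal R(P)$ (so that $P$ is Fredholm iff $(\Gr P,L)$ is a Fredholm pair, with the same nullity, deficiency and index), and then invoke the stability theorem for Fredholm pairs of closed subspaces under small gap (Kato's Theorem~4.24 of Chap.~4). That reduction is complete and correct, and it is the intended route.

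The caveat is that, as a self-contained proof, essentially all of the content sits in the subspace stability theorem, and your sketch of it does not go through as written. Lifting a basis of $M\cap L$ to a linearly independent family in $M'$ via the isomorphism $P_{M'}|_M$ says nothing about $\dim(M'\cap L)$; to get $\dim(M'\cap L)\le\dim(M\cap L)$ you need an injection of $M'\cap L$ \emph{into} $M\cap L$, which is obtained by comparing $\delta(M',M)$ with the minimal gap $\gamma(M,L)$ (every $x\in M'\cap L$ is close to some $m\in M$ with $\dist(m,L)$ small, hence $m$ is close to $M\cap L$, and the resulting map $x\mapsto P_{M\cap L}m$ is injective for small gap). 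Likewise, closedness of $M'+L$ and the bound $\codim(M'+L)\le\codim(M+L)$ require the annihilator duality $\hat\delta(M^\perp,M'^\perp)=\hat\delta(M,M')$ together with the same argument applied to $(M^\perp,L^\perp)$, and the local constancy of the index is the genuinely delicate part of Kato's Theorem~4.24 — it does not follow from a one-line duality remark. If you are content to cite the Fredholm-pair stability theorem (just as the paper is content to cite Kato), your argument is fine; otherwise the gap is precisely there.
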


The author is grateful to A.~L.~Skubachevskii for attention.

\end{document}